\newcommand{\cF}{{\mathcal{F}}}
\newcommand{\cG}{\mathcal{G}}
\newcommand{\RF}{\mathbf{R}}
\newcommand{\EE}{\mathbf{E}}
\newcommand{\wdd}{\mathrm{wd}}
\newcommand{\htt}{\mathrm{ht}}
\newcommand{\Pq}{\mathbf{P}_q}
\newcommand{\oPq}{\overline{\mathbf{P}}_q}
\newcommand{\Eq}{\mathbf{E}_q}
\newcommand{\oEq}{\overline{\mathbf{E}}_q}
\newcommand{\TL}{{\mathrm{TL}}}
\newcommand{\qss}{{\scriptstyle q}}
\newcommand{\qmineen}{{\scriptstyle q\minus1}}
\newcommand{\qmintwee}{{\scriptstyle q\minus2}}
\newcommand{\rmineen}{{\scriptstyle r\minus1}}
\newcommand{\rmintwee}{{\scriptstyle r\minus2}}
\newcommand{\qmindrie}{{\scriptstyle q\minus3}}
\newcommand{\qminvier}{{\scriptstyle q\minus4}}
\newcommand{\qpluseen}{{\scriptstyle q\plu1}}
\newcommand{\rpluseen}{{\scriptstyle r\plu1}}
\newcommand{\rss}{{\scriptstyle r}}
\newcommand{\RR}{\mathscr{R}}
\newcommand{\AAAA}{\mathscr{A}}
\newcommand{\Hom}{\mathrm{Hom}}
\newcommand{\minus}{\scalebox{0.9}{{\rm -}}}
\newcommand{\plu}{\scalebox{0.6}{{\rm +}}}
\newcommand{\End}{\mathrm{End}}
\newcommand{\dfs}{{/\kern-2pt/}}
\newcommand{\mk}{\Bbbk}
\newcommand{\cC}{\mathcal{C}}
\newcommand{\fh}{\mathfrak{h}}
\newcommand{\mN}{\mathbb{N}}
\newcommand{\mZ}{\mathbb{Z}}
\numberwithin{equation}{section}
\newcommand{\con}{{\rm con}}
\newcommand{\LL}{{\mathbf{L}}}
\newtheoremstyle{notes} {} {} {} {} {\bfseries} {.} {.5em} {}
\theoremstyle{plain}
\newtheorem{prop}[subsubsection]{Proposition}
\newtheorem{lemma}[subsubsection]{Lemma}
\newtheorem{cor}[subsubsection]{Corollary}
\newtheorem{thm}[subsubsection]{Theorem}
\newtheorem{thmA}{Theorem}
\theoremstyle{remark}
\newtheorem{rem}[subsubsection]{Remark} 
\newtheorem{ddef}[subsubsection]{Definition} 
\pretocmd{\appendix}{\addtocontents{toc}{\protect\addvspace{10\p@}}}{}{}
\theoremstyle{remark}
\newtheorem{ex}[subsubsection]{Example}
\newtheoremstyle{construction} {} {} {} {} {\bfseries} { } {0pt} {}
\theoremstyle{construction}
\title[The periplectic Brauer algebra II]{The periplectic Brauer algebra II: decomposition multiplicities}
\newcommand{\res}{{\rm Res}}
\newcommand{\mineen}{\mbox{-1}}
\newcommand{\mintwee}{\mbox{-2}}
\newcommand{\leeg}{\mbox{{\color{white} Test}}}
\subjclass[2010]{16G10, 81R05}
\begin{document} 
\date{} 
\begin{abstract}
We determine the Jordan-H\"older decomposition multiplicities of projective and cell modules over periplectic Brauer algebras in characteristic zero. These are obtained by developing the combinatorics of certain skew Young diagrams. We also establish a useful relationship with the Kazhdan-Lusztig multiplicities of the periplectic Lie supergroup.
\end{abstract}

\author{Kevin Coulembier}
\address{K.C.: School of Mathematics \& Statistics, University of Sydney, NSW 2006, Australia}
\email{kevin.coulembier@sydney.edu.au}

\author{Michael Ehrig}
\address{M.E.: School of Mathematics \& Statistics, University of Sydney, NSW 2006, Australia}
\email{michael.ehrig@sydney.edu.au}

\maketitle 

%\tableofcontents

% \pagebreak
\section*{Introduction} 

The periplectic Brauer algebra~$A_r$ was introduced by Moon in \cite{Moon} in the study of invariant theory of the periplectic Lie superalgebra. More recently, Kujawa and Tharp developed a diagram calculus for this algebra, which is a non-trivial adaptation of the diagrammatic approach to the Brauer algebra. This was exploited by the first author in \cite{PB1} in order to  determine the blocks of~$A_r$ over fields of characteristic zero. An important tool was the study of the periplectic Brauer algebra in the framework of standardly based algebras of \cite{JieDu}. This study also showed that, excluding some exceptional cases of low dimension or low characteristic of the base field, $A_r$ is either quasi-hereditary or admits a quasi-hereditary 1-cover.

As a standardly based algebra, $A_r$ admits cell modules, which are precisely the standard modules when $A_r$ is quasi-hereditary. In this paper, we calculate the Jordan-H\"older decomposition multiplicities of these cell modules in characteristic zero. The cell modules are labelled by partitions and the simple modules by non-empty partitions. The multiplicities for $A_r$ do not depend explicitly on~$r$ and our main result is stated in the following theorem.
\begin{thmA}
Let $\Gamma$ be the set of skew Young diagrams which are either zero or such that their maximal connected outer rim hooks satisfy the property that
\begin{itemize}
\item the width of the hook is one bigger than the height;
\item no box in the hook is strictly above the line going through the diagonal of the left most box (for the diagonal for which such a condition makes sense);
%each box of the hook is at least partly contained in the right isosceles triangle with side lengths given by the width and with vertices in the left-most box and the box above the highest box;
\end{itemize}
and such that the skew Young diagram obtained after removing the outer rim hooks is again in~$\Gamma$. We have
$$[W(\lambda):L(\mu)]=\begin{cases}1 &\mbox{if }\; \lambda\subseteq\mu\;\mbox{ and }\; \mu/\lambda\in\Gamma\\
0 &\mbox{otherwise}.
\end{cases}$$
\end{thmA}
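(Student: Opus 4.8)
The plan is to compute the full decomposition matrix block by block, building on the standardly based structure of $A_r$ from \cite{PB1}. First I would use the block classification there to restrict attention to a single block: $[W(\lambda):L(\mu)]$ vanishes unless $\lambda$ and $\mu$ lie in the same block, and the combinatorial description of the blocks already confines the nonzero entries to pairs with $\lambda\subseteq\mu$ and $\mu/\lambda$ of rim-hook type. Since the claimed answer is multiplicity-free, the task reduces to identifying the support of the decomposition matrix within each block, i.e.\ deciding for which $\mu\supseteq\lambda$ the multiplicity equals $1$.

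For the computation itself I would pass to the periplectic Lie supergroup $P(n)$ through Schur--Weyl duality on $V^{\otimes r}$, where $V$ is the natural module and $n$ is taken large relative to $r$ so that the duality is sharp. In this range the projective $A_r$-modules are realised as the images of the indecomposable summands of $V^{\otimes r}$, whose filtrations by cell (Kac) modules are controlled by the Kazhdan--Lusztig combinatorics of $P(n)$; these multiplicities are known to be multiplicity-free and are computed by an explicit recursive rule. A BGG-type reciprocity $[P(\mu):W(\lambda)]=[W(\lambda):L(\mu)]$, valid in the quasi-hereditary setting, then converts the $P(n)$-data into the decomposition numbers we want. An alternative, more self-contained engine would be to set up induction and restriction functors between $A_{r-1}$ and $A_r$ and derive a recursion among the $[W(\lambda):L(\mu)]$ from the branching of cell and simple modules; I expect either route to produce the same rim-hook recursion.

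The core of the proof is then combinatorial: to show that the support predicted by the Kazhdan--Lusztig rule coincides with $\{\mu/\lambda\in\Gamma\}$. Here I would match one elementary step of the weight-diagram (or branching) recursion with the removal of a single maximal connected outer rim hook, verifying that the two defining conditions of $\Gamma$---width exceeding height by one, and no box lying strictly above the diagonal through the leftmost box---single out precisely the admissible steps, and that the recursive clause ``the diagram remaining after removing the outer rim hooks again lies in $\Gamma$'' reproduces the recursion governing the Kazhdan--Lusztig multiplicities. This yields the value $1$ exactly on $\Gamma$ and $0$ elsewhere.

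The main obstacle will be this combinatorial translation. On the combinatorial side one must first check that the recursion defining $\Gamma$ is well-posed---that the maximal connected outer rim hooks are uniquely determined and that their simultaneous removal is compatible with the recursive condition---and then prove that this peeling procedure matches the Kazhdan--Lusztig recursion for the $P(n)$ multiplicities step for step, which is the delicate point. A secondary difficulty is the exceptional low-dimensional or small-parameter cases in which $A_r$ is not quasi-hereditary but only admits a quasi-hereditary $1$-cover: there I would need to confirm that passing to the $1$-cover leaves the multiplicities $[W(\lambda):L(\mu)]$ unchanged, so that the reciprocity argument still applies.
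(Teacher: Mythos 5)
Your central engine---transporting the Kazhdan--Lusztig data of $P(n)$ from \cite{gang} across Schur--Weyl duality---contains the main gap. For $\mathfrak{p}(n)$ the indecomposable summands of $V^{\otimes r}$ are \emph{not} projective in the category $\cF$ (the trivial module is not a direct summand of $V\otimes V^*$, since $V^*\cong \Pi V$ and the evaluation map does not split), so their thick-Kac-filtration multiplicities are not the numbers $(Q(\omega):\Delta(\xi))$ computed in \cite{gang}; and the bridge you assert---that the Schur--Weyl functor matches thick Kac modules with cell modules and projective $A_r$-modules with summands of tensor space, preserving filtration multiplicities---is a substantial theorem that exists in neither \cite{PB1} nor \cite{gang}. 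The paper itself does not use this route: it proves Theorem~1 independently and only \emph{afterwards} (Appendix~A, Proposition~\ref{PropGang} and Lemma~\ref{LemGang}) establishes the numerical coincidence with \cite{gang}, exploiting it solely for the bound $[P_r(\nu):L_r(\mu)]\le 1$ in Theorem~2. Two further points would derail your reciprocity step even if the bridge were available: the reciprocity for $A_r$ is \emph{twisted}, $(P_r(\mu):W_r(\lambda))=[W_r(\lambda'):L_r(\mu')]$ as in equation~\eqref{eqBGG}, not the untwisted identity you quote; and on the $\mathfrak{p}(n)$ side BGG reciprocity genuinely fails (in \cite{gang} the multiplicities $(Q:\Delta)$ and $[\Delta:S]$ are governed by different combinatorics, $\blacktriangle$ versus $\blacktriangledown$), so one cannot pass freely between the two kinds of multiplicities on the Lie superalgebra side either.

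Your ``alternative, more self-contained engine'' is in fact the paper's actual method, but as sketched it is too coarse: plain restriction from $A_r$ to $A_{r\minus 1}$ (Lemma~\ref{CorPB1}) does not determine the multiplicities. The paper needs the refinement by generalised eigenspaces of the Jucys--Murphy element (Proposition~\ref{Propalpha}), partial control of restrictions of \emph{simple} modules (Lemmas~\ref{LemResS1} and~\ref{LemResS2}), and---crucially---the purely combinatorial equivalence $\Gamma=\Upsilon=\overline\Upsilon$ of Theorem~\ref{ThmEqui}. The induction in the proof of Theorem~\ref{MainThm} proceeds by the elementary moves $\oPq$ (pushing a diagonal of boxes down) and $\oEq$ (extending by two boxes), not by peeling outer rim hooks; the rim-hook description of the statement only becomes usable after the hard equivalence theorem. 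Your plan to ``match one elementary step of the recursion with the removal of a single maximal connected outer rim hook'' glosses over precisely this equivalence, which the paper identifies as its most intricate part; without it, neither your Lie-theoretic route nor your restriction-based route closes.
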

We also obtain a description of the Cartan decomposition matrix of~$A_r$.
\begin{thmA}
With $\Gamma$ the set of skew Young diagrams of Theorem~1 and $\Gamma'$ the set consisting of the conjugates of the diagrams in $\Gamma$, we have
$$[P(\nu):L(\mu)]=\begin{cases}1 &\mbox{if there exists~$\lambda$ with}\; \nu\supseteq \lambda\subseteq\mu, \;\mu/\lambda\in\Gamma \;\mbox{ and }\; \nu/\lambda\in\Gamma'\\
0 &\mbox{otherwise}.
\end{cases}$$
\end{thmA}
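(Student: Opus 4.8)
The plan is to deduce Theorem~2 from Theorem~1 by a reciprocity argument and then to collapse the resulting sum to the claimed $0/1$ answer by a combinatorial uniqueness statement relating $\Gamma$ and $\Gamma'$. First I would invoke the standardly based structure of $A_r$ from \cite{PB1}: away from the exceptional low-dimensional cases, $A_r$ is quasi-hereditary or admits a quasi-hereditary $1$-cover, so each indecomposable projective $P(\nu)$ carries a filtration by cell modules with well-defined multiplicities $[P(\nu):W(\lambda)]$. Counting composition factors through such a filtration gives
\begin{equation*}
[P(\nu):L(\mu)]=\sum_{\lambda}[P(\nu):W(\lambda)]\,[W(\lambda):L(\mu)].
\end{equation*}

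The key structural input is a BGG-type reciprocity combined with the conjugation symmetry peculiar to the periplectic case. Reciprocity in the standardly based setting yields $[P(\nu):W(\lambda)]=[\nabla(\lambda):L(\nu)]$, where $\nabla(\lambda)$ is the dual cell module. The feature distinguishing $A_r$ from the classical Brauer algebra — which I would take from \cite{PB1} and the comparison with the periplectic supergroup — is that the relevant contravariant duality twists the labelling by conjugation, so that $[\nabla(\lambda):L(\nu)]=[W(\lambda'):L(\nu')]$. Substituting and inserting Theorem~1 (each factor then being $0$ or $1$), and using that $\nu'/\lambda'\in\Gamma$ is equivalent to $\nu/\lambda\in\Gamma'$, I obtain
\begin{equation*}
[P(\nu):L(\mu)]=\#\{\lambda : \lambda\subseteq\mu,\ \mu/\lambda\in\Gamma,\ \lambda\subseteq\nu,\ \nu/\lambda\in\Gamma'\}.
\end{equation*}

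It remains to show this cardinality is at most $1$, i.e.\ that for fixed $\mu$ and $\nu$ at most one $\lambda$ satisfies both conditions; this is the main obstacle and the genuinely combinatorial heart of the statement, since for a single $\mu$ the set $\{\lambda:\mu/\lambda\in\Gamma\}$ can be large. I would argue uniqueness from the rim-hook description of $\Gamma$: the condition $\mu/\lambda\in\Gamma$ forces $\lambda$ to arise from $\mu$ by successively stripping the prescribed maximal connected outer rim hooks (width one more than height, with the diagonal-position constraint), while $\nu/\lambda\in\Gamma'$ imposes the transpose stripping on $\nu$, whose hooks are taller than wide. The plan is to show that these two strippings are mutually incompatible unless they pin down the same inner shape: at each stage the hook-shape and diagonal constraints determine a unique removable configuration, so the two containment chains meeting at a common $\lambda$ force $\lambda$ to be unique. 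The degenerate cases — the zero diagram and the exceptional low-dimensional situations flagged in \cite{PB1}, where the quasi-hereditary structure or reciprocity must be verified directly — I would dispose of separately. Once uniqueness is established, the cardinality equals the indicator of existence of such a $\lambda$, which is exactly the assertion of Theorem~2.
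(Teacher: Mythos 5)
Your reduction is the same as the paper's: combining the cell filtration of projectives with the twisted reciprocity \eqref{eqBGG} from \cite{PB1} gives exactly \eqref{eqPL}, and inserting Theorem~\ref{MainThm} turns $[P_r(\nu):L_r(\mu)]$ into the cardinality $\#\{\lambda\,:\,\mu/\lambda\in\Gamma,\ \nu/\lambda\in\Gamma'\}$. Up to that point your argument is correct and coincides with the paper's.

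The gap is the final step, where you assert that this cardinality is at most $1$ and propose to prove it by claiming that ``at each stage the hook-shape and diagonal constraints determine a unique removable configuration.'' That claim is false as stated: for a fixed $\mu$ there are in general many $\lambda$ with $\mu/\lambda\in\Gamma$ (already $\lambda=\mu$ qualifies, as does any $\lambda$ obtained by stripping a single rim hook lying in $\Gamma_0$), and likewise many $\lambda$ with $\nu/\lambda\in\Gamma'$; no individual stripping step is forced. Uniqueness of a \emph{common} $\lambda$ can only emerge from the interaction between a $\Gamma$-stripping of $\mu$ and a $\Gamma'$-stripping of $\nu$ meeting at the same partition, and your sketch contains no analysis of that interaction --- it essentially restates the desired conclusion. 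This is exactly the point where the paper does \emph{not} argue combinatorially: it translates the $\Gamma$-condition into arrow-diagram combinatorics (Proposition~\ref{PropGang}), identifies the two factors in \eqref{eqPL} with the multiplicities $[\Delta(\underline{\lambda}):S(\underline{\mu})]$ and $(Q(\underline{\nu}):\Delta(\underline{\lambda}))$ for the periplectic Lie superalgebra (Lemma~\ref{LemGang}), bounds the resulting sum by $[Q(\underline{\nu}):S(\underline{\mu})]$, and then invokes the multiplicity-one theorem for projectives, \cite[Theorem~8.1.2]{gang} (see Proposition~\ref{PropMult1}). So the uniqueness statement you need is true, but it is a genuinely non-trivial fact --- for partition weights it is essentially equivalent to that theorem of \cite{gang} --- and to complete your proof you must either supply an actual combinatorial proof of it or import it from \cite{gang} as the paper does.
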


Our description of the decomposition multiplicities is very different from the corresponding result for the ordinary Brauer algebra in~\cite{BrMult, Martin}. In the latter case, the decomposition multiplicities are given in terms of parabolic Kazhdan-Lusztig polynomials of type $D$ with respect to a maximal parabolic subalgebra of type $A$. Also the proof is very different. The most intricate part of our proof is actually proving equivalence between several descriptions of the set of skew Young diagrams~$\Gamma$.
One essential tool remains, as for Brauer algebras, the restriction of modules from $A_r$ to $A_{r\minus 1}$ in connection with the action of a Jucys-Murphy type element of~$A_r$. However, the resulting information is far less conclusive and elegant than the `translation principle' used in~\cite{BrMult, Martin}. Nevertheless, we demonstrate how the decategorifications of the restriction functors relate to an infinite Temperley-Lieb algebra.

Recently, the decomposition numbers for the periplectic Lie superalgebra were determined in \cite{gang}. These are described in terms of an arrow diagram calculus. In an appendix we rewrite the result in Theorem 1 in terms of a very similar arrow diagram calculus. As a consequence, we find an intimate relation between the two types of decomposition multiplicities. We rely on this to prove that the non-zero entries in the Cartan decomposition matrix of~$A_r$ in Theorem~2 must be $1$, by using the corresponding result in~\cite{gang}.

The paper is organised as follows. In Section~\ref{Prel} we recall some terminology of partitions and properties of periplectic Brauer algebras. In Section~\ref{SecRes} we derive some properties of the interplay of the restriction functor and a Jucys-Murphy element, as introduced in~\cite{PB1}. Section~\ref{Skew} is a purely combinatorial study of the set $\Gamma$ of skew Young diagrams. In~Section~\ref{SecCell} we combine the representation theoretic results of Section~\ref{SecRes} with the combinatorial ones in Section~\ref{Skew} in order to determine the decomposition multiplicities for periplectic Brauer algebras. Finally, in Appendix A, we demonstrate that the decomposition multiplicities can also be described using the arrow diagram calculus of \cite{gang}.

Jonathan Kujawa has informed us that he and Ben Tharp independently obtained a description of the cell multiplicities of the periplectic Brauer algebra, which will appear soon.

\section{Preliminaries}\label{Prel}
For the entire paper we fix an algebraically closed field~$\mk$ of characteristic zero. We use the canonical inclusion~$\mZ\subset\mk$ of unital rings.

\subsection{Partitions}
\subsubsection{}
We will identify a partition with its Young diagram, using English notation. For instance, the partition~$(3,1)$ is represented by the diagram {$\begin{minipage}{.85cm} \scalebox{.6}{\yng(3,1)} \end{minipage}$}. Each box or node in the diagram has coordinates $(i,j)$, meaning that the box is in row $i$ and column~$j$. The above diagram has boxes with coordinates $(1,1)$, $(1,2)$, $(1,3)$ and~$(2,1)$.
The {\em content} of a box~$b$ in position~$(i,j)$ in a Young diagram is $\con(b):=j-i\in\mZ$. The content of each box in the Young diagram of~$(3,1)$ is displayed as {$\begin{minipage}{.9cm} \scalebox{.6}{\young(012,\mineen)} \end{minipage}$}. Any box with content $q$ will be referred to as a~$q$-box. We will occasionally also need the value $i+j$ for a box in position~$(i,j)$. We refer to that value as the {\em anticontent} of the box.

\subsubsection{}

For any partition~$\lambda$ we define the set of partitions~$\AAAA(\lambda)$, resp. $\RR(\lambda)$, containing all partitions which can be obtained from~$\lambda$ by adding an addable box, resp. removing a removable box. For any $q\in\mZ$, we consider the subset $\AAAA(\lambda)_q$ of~$\AAAA(\lambda)$, consisting of the partitions obtained by adding a~$q\minus 1$-box. Similarly, the partitions in~$\RR(\lambda)_q$ are obtained by removing a~$q$-box. With this convention we have
$$\mu\in\AAAA(\lambda)_q\quad\Leftrightarrow \quad \lambda\in \RR(\mu)_{q\minus 1}.$$
Obviously the sets $\AAAA(\lambda)_q$ and~$\RR(\lambda)_q$ are either empty or contain precisely one element.

\subsubsection{}We denote the empty partition by~$\varnothing$. On the other hand, the empty set will be denoted by~$\emptyset$. This means that we have 

$$\RR({\Box})_q=\begin{cases}\{\varnothing\}&\mbox{if } \;q=0,\\
\;\,\emptyset&\mbox{if }\;q\not=0.\end{cases}
$$ 

\subsection{The periplectic Brauer algebra}\label{IntroAr} The periplectic Brauer algebra~$A_r$ was introduced in~\cite{Moon}. In \cite{Kujawa}, a diagrammatic description of the algebra was developed. In particular, $A_r$ has a~$\mk$-basis of ordinary Brauer diagrams, but multiplication is complicated by appearance of minus signs. In the current paper we do not need the diagrammatic description directly, although we rely on results of \cite{PB1} which heavily exploited the diagrammatic rules of~\cite{Kujawa}. Hence we do not repeat the diagrammatic description here. It was proved in \cite[Theorem~4.3.1]{Kujawa} that the simple $A_r$-modules $L_r(\mu)$ are labelled by the set of partitions
$$\Lambda_r=\{\mu\vdash r-2i\,|\, 0\le i < r/2\}.$$

In \cite[Theorem~3]{PB1}, it was proved that~$A_r$ admits an interesting standardly based structure, where the latter is a generalisation of cellular algebras introduced in~\cite{JieDu}. In particular, $A_r$ has {\em cell modules} $W_r(\lambda)$, labelled by partitions in the set
$$\LL_r=\{\lambda\vdash r-2i\,|\, 0\le i \le r/2\}.$$
When $\lambda\not=\varnothing$, the module $W_r(\lambda)$ has simple top $L_r(\lambda)$ and the radical of~$W_r(\lambda)$ only has simple constituents $L_r(\nu)$ with $|\nu|>|\lambda|$. 
The projective $A_r$-modules admit a filtration with sections given by cell modules. Moreover, the multiplicities in this filtration satisfy the following twisted Humphreys-BGG reciprocity relation
\begin{equation}\label{eqBGG}(P_r(\mu):W_r(\lambda))\;=\;[W_r(\lambda'):L_r(\mu')],\quad\mbox{for all $\lambda\in\Lambda_r$ and~$\mu\in\LL_r$,}\end{equation}
where $\lambda'$ denotes the conjugate of a partition~$\lambda$. In particular, determining the Jordan-H\"older multiplicities of the cell modules, henceforth referred to as {\em cell multiplicities}, also determines the Jordan-H\"older multiplicities of the indecomposable projective modules. Concretely, the Cartan decomposition matrix can be expressed as \begin{equation}\label{eqPL}[P_r(\nu):L_r(\mu)]\;\,=\; \,\sum_{\lambda\in\LL_r}[W_r(\lambda):L_r(\mu)]\,[W_r(\lambda'):L_r(\nu')],\end{equation}
for $\nu,\mu\in\Lambda_r$.

Note that the above discussion implies in particular that, when $r$ is odd (and hence $\LL_r=\Lambda_r$), the algebra~$A_r$ is quasi-hereditary, with standard modules given by cell modules.

\subsection{Some preliminary results on cell multiplicities}
We will always assume $r\in\mZ_{\ge 2}$. By \cite[equation~(4.11)]{PB1}, we have the following 
reinterpretation of a lemma in \cite{PB1}:
\begin{lemma}[Lemma~7.2.2 in~\cite{PB1}]\label{2Lem}
For $\lambda\in \LL_r$ and~$\mu\in\Lambda_r$, we have
\begin{enumerate}[(i)]
\item $[W_r(\lambda):L_r(\mu)]=0 \text{ unless } \lambda\subseteq\mu$, and
\item $[W_r(\lambda):L_r(\mu)]\,=\, [W_i(\lambda):L_i(\mu)],\text{ for }  i=|\mu|.$
\end{enumerate}
\end{lemma}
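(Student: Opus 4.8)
The plan is to derive both statements from the behaviour of cell and simple modules under the functors relating $A_r$ to algebras on fewer strands --- precisely the restriction functor $\res\colon A_r\text{-mod}\to A_{r-1}\text{-mod}$ together with the Jucys--Murphy type element advertised in the introduction. Two structural inputs from the standardly based theory of \cite{PB1} will be used throughout. First, a branching rule expressing $\res W_r(\lambda)$ as an iterated extension of cell modules $W_{r-1}(\kappa)$ with $\kappa$ obtained from $\lambda$ by adding or removing a single box, where the box added or removed is recorded through its content by the eigenvalue of the Jucys--Murphy element $x_r$. Second, the exact idempotent-truncation (localisation) functor $F\colon A_r\text{-mod}\to A_{r-2}\text{-mod}$ of the cellular tower, which sends $W_r(\lambda)\mapsto W_{r-2}(\lambda)$ whenever $|\lambda|\le r-2$ and acts on simples by $L_r(\mu)\mapsto L_{r-2}(\mu)$ when $|\mu|\le r-2$ and $L_r(\mu)\mapsto 0$ otherwise (that is, when $|\mu|=r$).

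For part (i) I would induct on $r$; the stated radical structure of $W_r(\lambda)$ reduces matters to the case $|\mu|>|\lambda|$, since $|\mu|=|\lambda|$ already forces $\mu=\lambda$. Assume $[W_r(\lambda):L_r(\mu)]\neq 0$ and restrict to $A_{r-1}$. As $x_r$ commutes with $A_{r-1}$, it acts $A_{r-1}$-linearly on $\res W_r(\lambda)$ and on $\res L_r(\mu)$, its generalised eigenspaces are $A_{r-1}$-submodules, and on the cell filtration of $\res W_r(\lambda)$ the section indexed by $\kappa\in\AAAA(\lambda)_q\cup\RR(\lambda)_q$ is a single eigenspace with eigenvalue governed by $q$. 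Choosing an eigenvalue $q$ for which $\res L_r(\mu)$ has a composition factor $L_{r-1}(\mu^-)$ in the $q$-eigenspace, exactness of $\res$ forces that factor to occur in some section $W_{r-1}(\kappa)$ with $\kappa\in\AAAA(\lambda)_q\cup\RR(\lambda)_q$, whence $\kappa\subseteq\mu^-$ by the inductive hypothesis, while the eigenvalue simultaneously forces $\mu^-\in\RR(\mu)_q$. Combining these containments --- in particular, when $\kappa\in\AAAA(\lambda)_q$ one reads off $\lambda\subseteq\kappa\subseteq\mu^-\subseteq\mu$ --- and eliminating the incompatible box moves via the content constraint yields $\lambda\subseteq\mu$.

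Part (ii) then follows formally. Since $F$ is exact, applying it to a composition series of $W_r(\lambda)$ and using that $F$ carries the surviving simples $L_r(\nu)$ with $|\nu|\le r-2$ to the distinct simples $L_{r-2}(\nu)$, while annihilating those with $|\nu|=r$, gives $[W_r(\lambda):L_r(\mu)]=[FW_r(\lambda):L_{r-2}(\mu)]=[W_{r-2}(\lambda):L_{r-2}(\mu)]$ for every $\mu$ with $|\mu|\le r-2$. Here part (i) supplies $|\lambda|\le|\mu|\le r-2$, so that $W_{r-2}(\lambda)$ is defined and equal to $FW_r(\lambda)$. Because $|\mu|=r-2k$ has the same parity as $r$, iterating this identity along $r\to r-2\to\cdots\to i=|\mu|$ is legitimate and terminates exactly when the index reaches $|\mu|$, producing $[W_r(\lambda):L_r(\mu)]=[W_i(\lambda):L_i(\mu)]$.

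The main obstacle is the content bookkeeping in part (i). Unlike the translation-principle arguments available for the ordinary Brauer algebra, the restriction $\res L_r(\mu)$ of the simple module is not known a priori, so one cannot simply list its constituents; the argument must extract only the existence of a suitable $\mu^-\in\RR(\mu)_q$ sitting in the $q$-eigenspace and feed it into the inductive containment. Verifying that these eigenvalue constraints are sharp enough to force $\lambda\subseteq\mu$ in every configuration --- rather than merely the size bound $|\lambda|\le|\mu|$ already furnished by the radical structure --- is where the real work lies. By contrast part (ii) is essentially automatic once $F$ is in hand, the only delicate point being that $F$ neither merges distinct surviving simples nor disturbs the multiplicity of $L_{r-2}(\mu)$, which is immediate from its exactness together with the clean description of its action on simple modules.
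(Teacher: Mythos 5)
This lemma is not proved in the paper at all: it is imported from \cite{PB1} (Lemma~7.2.2 there, rewritten via \cite[eq.~(4.11)]{PB1}), so your attempt must be judged on its own merits. Your choice of tools is the natural one --- restriction combined with the generalised $x_r$-eigenspace decomposition, plus the idempotent truncation $F=e^{(r)}-$ --- and your part (ii) is essentially sound. The genuine gap is in part (i). Your induction is run for \emph{arbitrary} $\mu\in\Lambda_r$, and its pivotal step, ``the eigenvalue simultaneously forces $\mu^-\in\RR(\mu)_q$,'' is only available when $\mu\vdash r$: in that case $L_r(\mu)=W_r(\mu)$ is itself a cell module and Proposition~\ref{Propalpha}, i.e.\ equation~\eqref{eqSC}, describes each eigenspace $L_r(\mu)_q$. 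For $|\mu|<r$ no such control exists, and the claim is simply false: the remark following Lemma~\ref{LemResS1} records $[L_3(1)_0]=[L_2(2)]+[L_2(1,1)]$, so the constituents of the $q=0$ eigenspace of $\res_3 L_3(1)$ are labelled by partitions obtained by \emph{adding} a box to $(1)$; worse, since $\RR((1))_0=\{\varnothing\}$ and $\varnothing\notin\Lambda_2$, there is no eigenvalue $q$ for which $\res_3 L_3(1)$ has any constituent $L_2(\mu^-)$ with $\mu^-\in\RR((1))_q$. So in the regime $|\mu|<r$ neither the implication nor even the existence statement you rely on holds, and your stated order of proof (part (i) before part (ii)) cannot be carried out.

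The repair is a reordering for which you already have all the pieces. Part (ii) does not in fact need part (i): when $|\lambda|>|\mu|$ the left-hand side vanishes by the radical structure of $W_r(\lambda)$ (and the right-hand side is not defined), while for $|\lambda|\le|\mu|$ your truncation argument runs verbatim, since $W_{r-2}(\lambda)$ is then defined at every step of the iteration. Proving (ii) first reduces (i) to the case $\mu\vdash r$, where your restriction argument is valid. Even there, the sub-case $\kappa\in\RR(\lambda)_q$ needs a real argument rather than an appeal to ``the content constraint'': writing $\lambda=\kappa\cup\{b\}$ and $\mu=\mu^-\cup\{b'\}$ with $b,b'$ both of content $q$, if $b$ sat strictly lower on the $q$-diagonal than $b'$, say in row $i$, then $\kappa_i=q+i-1>\mu^-_i$, contradicting the inductive containment $\kappa\subseteq\mu^-$; hence $b$ sits weakly higher, and then either $b=b'$ or $b\in\mu^-$, so $\lambda\subseteq\mu$ in either case. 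With these two corrections --- the reordering and this diagonal argument --- your proof closes up.
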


Combining some results in \cite{PB1} also yields the following statement.
\begin{prop}\label{PropDiff2}
For $\lambda\vdash r\minus 2$ and~$\mu\in\Lambda_r$, we have
$$[W_r(\lambda):L_r(\mu)]=\begin{cases}1&\mbox{if $\mu$ is obtained from~$\lambda$ by adding a rim 2-hook {$\begin{minipage}{.65cm} \scalebox{.7}{\yng(2)} \end{minipage}$}},\\ 1&\mbox{if}\;\,\mu=\lambda,\\ 0&\mbox{otherwise.}\end{cases}$$
\end{prop}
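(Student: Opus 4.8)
The plan is to compute the multiplicities $[W_r(\lambda):L_r(\mu)]$ for $\lambda\vdash r\minus 2$ directly from the structural facts stated in Section~\ref{IntroAr}, reducing everything to a very small combinatorial check. By Lemma~\ref{2Lem}(i), the multiplicity vanishes unless $\lambda\subseteq\mu$; since $\mu\in\Lambda_r$ satisfies $|\mu|\in\{r, r\minus 2, r\minus 4,\ldots\}$ and $|\lambda|=r\minus 2$, the containment $\lambda\subseteq\mu$ together with $|\mu|\ge|\lambda|$ forces either $|\mu|=r\minus 2$ or $|\mu|=r$. In the first case $\lambda\subseteq\mu$ with $|\mu|=|\lambda|$ gives $\mu=\lambda$, and I would invoke the fact that $W_r(\lambda)$ has simple top $L_r(\lambda)$ with multiplicity one (stated just before~\eqref{eqBGG}) to conclude $[W_r(\lambda):L_r(\lambda)]=1$. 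In the second case $\mu$ is obtained from $\lambda$ by adding two boxes.

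The substantive content therefore lies in the case $|\mu|=r=|\lambda|+2$, where I must show the multiplicity is $1$ exactly when $\mu/\lambda$ is a rim $2$-hook {$\begin{minipage}{.65cm} \scalebox{.7}{\yng(2)} \end{minipage}$} and $0$ otherwise. Here I would appeal to Lemma~\ref{2Lem}(ii), which lets me replace $[W_r(\lambda):L_r(\mu)]$ by $[W_i(\lambda):L_i(\mu)]$ with $i=|\mu|=r$; this is harmless since we are already at the top degree, but more importantly it signals that the answer is governed by the low-degree combinatorics independent of $r$. The two boxes added to pass from $\lambda$ to $\mu$ form one of three shapes: a domino lying in a single row (the horizontal rim $2$-hook {$\begin{minipage}{.65cm} \scalebox{.7}{\yng(2)} \end{minipage}$}), a domino lying in a single column (the vertical rim $2$-hook {$\begin{minipage}{.45cm} \scalebox{.7}{\yng(1,1)} \end{minipage}$}), or two disconnected boxes added in different, non-adjacent rows and columns. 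I expect the claim to be that the multiplicity is $1$ precisely in the horizontal case.

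To pin down these three subcases I would lean on the interaction between cell modules and the relevant results of~\cite{PB1} that are being cited in assembling this proposition. The cleanest route is to use the reciprocity~\eqref{eqBGG}, rewriting $[W_r(\lambda):L_r(\mu)]$ as a projective-to-cell filtration multiplicity $(P_r(\mu'):W_r(\lambda'))$ for the conjugate partitions, or to use the branching/restriction analysis of the Jucys--Murphy element promised in Section~\ref{SecRes}, evaluated in the minimal-rank case. Either way the technical input from~\cite{PB1} should directly yield that adding a horizontal $2$-hook contributes multiplicity $1$, while adding a vertical $2$-hook or two separated boxes contributes $0$; the conjugation in~\eqref{eqBGG} is exactly what distinguishes horizontal from vertical, and I anticipate this asymmetry between rows and columns is the one genuinely delicate point, since it reflects the non-self-dual nature of the periplectic algebra.

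The main obstacle I foresee is not the case analysis itself but correctly extracting the precise multiplicity-one statement for the single adjacent-box configuration from the cited results of~\cite{PB1}, and in particular verifying that no other additions of two boxes can contribute. Once the vanishing outside the two listed configurations is secured via Lemma~\ref{2Lem}(i) and the degree constraint, and the two nonzero values are each identified as $1$ using the top-of-cell-module statement and the reciprocity~\eqref{eqBGG} (or the explicit computation in~\cite{PB1}), the proposition follows by assembling the cases.
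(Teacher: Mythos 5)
Your reduction steps are fine and agree with the paper: Lemma~\ref{2Lem}(i) plus the degree constraint leaves only the cases $\mu=\lambda$ and $\mu\vdash r$, and the value $1$ for $\mu=\lambda$ follows from the simple-top statement in Section~\ref{IntroAr}. The genuine gap is that the three remaining computations --- horizontal rim $2$-hook gives $1$, vertical rim $2$-hook gives $0$, two disconnected boxes give $0$ --- are never actually carried out; you defer them to unspecified ``technical input from~\cite{PB1}'', and neither of the two mechanisms you sketch can supply it. The reciprocity~\eqref{eqBGG} is vacuous for this purpose: it only re-expresses the cell multiplicity as the filtration multiplicity $(P_r(\mu'):W_r(\lambda'))$, which is not independently accessible (knowing it is equivalent to knowing the cell multiplicities), so it merely transfers the horizontal problem to a vertical one and cannot by itself break the symmetry between the two. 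The Jucys--Murphy/restriction route is circular inside this paper's logic: the refined restriction statements of Section~\ref{SecRes} that could pin down simple multiplicities (Corollary~\ref{Coreta}, Lemmas~\ref{LemResS1} and~\ref{LemResS2}) are themselves proved \emph{using} Proposition~\ref{PropDiff2}, while the coarse statement Proposition~\ref{Propalpha} only relates restrictions of cell modules and cannot distinguish the three configurations without an induction whose base case ($r=2$: $[W_2(\varnothing):L_2(2)]=1$ but $[W_2(\varnothing):L_2(1,1)]=0$) is a structure computation inside the algebra $A_2$ that you never perform.

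What the paper actually does is assemble three specific results of~\cite{PB1}: the horizontal case is exactly \cite[Proposition~7.2.6]{PB1}; for two added boxes in different rows and columns it observes that their contents must differ by at least $2$, contradicting the content constraint of \cite[Corollary~6.2.7]{PB1}; and the vertical case follows by combining \cite[Lemma~7.2.3]{PB1} with \cite[Corollary~4.3.3]{PB1}. The content-difference argument for the disconnected case is entirely absent from your proposal (you lump it together with the vertical case), and the vertical case needs its own algebraic input rather than a conjugation symmetry --- indeed the row/column asymmetry you correctly anticipate is precisely why no formal duality argument can settle it. So the proposal identifies the correct case division but proves none of the cases that carry the content of the proposition.
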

\begin{proof}
The case where $\mu$ is obtained from~$\lambda$ by adding a rim 2-hook {$\begin{minipage}{.6cm} \scalebox{.7}{\yng(2)} \end{minipage}$} is \cite[Proposition~7.2.6]{PB1}. The case $\lambda=\mu$ is discussed in Section~\ref{IntroAr}. It thus only remains to prove the vanishing of other multiplicities for $\mu\not=\lambda$. By Lemma~\ref{2Lem}(i), it suffices to consider the case $\lambda\subset \mu$, so $\mu\vdash r$. 

Assume first that~$\mu$ is obtained from~$\lambda$ by adding two boxes which are neither in the same row nor same column. Their contents must thus differ by at least $2$, which yields a contradiction by \cite[Corollary~6.2.7]{PB1}.

Finally assume that~$\mu$ is obtained from~$\lambda$ by adding a rim 2-hook {$\begin{minipage}{.35cm} \scalebox{.6}{\yng(1,1)} \end{minipage}$}. In this case, the combination of \cite[Lemma~7.2.3]{PB1} and \cite[Corollary~4.3.3]{PB1} proves the vanishing, concluding the proof.
\end{proof}

\section{The restriction functor}\label{SecRes}
The algebra~$A_{r\minus 1}$ is a subalgebra of~$A_r$, see~\cite[2.1.7]{PB1}. We denote by 
$$\res_{r}\,:\, A_r\mbox{-mod}\;\to\;A_{r\minus 1}\mbox{-mod},$$ the restriction functor corresponding to this embedding $A_{r\minus 1}\subset A_r$.

In \cite[Section~6.1]{PB1} a Jucys-Murphy (JM) element $x_r\in A_r$ was introduced. By \cite[Lemma~6.1.2]{PB1}, all elements of the subalgebra~$A_{r\minus 1}$ of $A_r$ commute with $x_r$. For any $A_r$-module $M$, the $A_{r\minus 1}$-module $\res_{r}M$ thus naturally decomposes into generalised eigenspaces for $x_r$. We write
$$\res_{r}M\;=\;\bigoplus_{\alpha} M_\alpha,$$
where $M_\alpha$ is the $A_{r\minus 1}$-submodule of~$M$ on which $(x_r\,\minus\,\alpha)$ acts nilpotent.

\subsection{Restriction of cell modules}

\begin{lemma}[Corollary~5.24 in \cite{PB1}]\label{CorPB1}
For any $\lambda\in\LL_r$, we have a short exact sequence
$$0\;\to\;\bigoplus_{\mu\in \RR(\lambda)}W_{r\minus 1}(\mu)\;\to\;\res_{r}W_r(\lambda)\;\to\; \bigoplus_{\nu\in\AAAA(\lambda)}W_{r\minus 1}(\nu)\;\to\;0.$$
The left term vanishes if $\lambda=\varnothing$, the right term vanishes if $\lambda\vdash r$.
\end{lemma}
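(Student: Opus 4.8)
The plan is to work with the explicit diagram basis of the cell module supplied by the standardly based structure of $A_r$ in~\cite{PB1} (built on the diagram calculus of~\cite{Kujawa}), and to split it according to the role played by the final point $r$. For $\lambda\vdash r\minus 2i$, a basis of $W_r(\lambda)$ is indexed by pairs consisting of a partial Brauer diagram on $\{1,\dots,r\}$ with exactly $i$ arcs and $r\minus 2i$ free ``defect'' strands, together with a basis vector of the Specht module $S^\lambda$ attached to the defects. Every such basis element lies in exactly one of two classes: those in which the point $r$ is a defect, and those in which $r$ is the endpoint of an arc $(s,r)$ with $s<r$. Writing $N$ for the span of the defect-$r$ elements and $Q$ for the span of the arc-$r$ elements, one has $\res_r W_r(\lambda)=N\oplus Q$ as vector spaces, and the whole argument reduces to (a) showing $N$ is an $A_{r\minus 1}$-submodule and identifying it, and (b) identifying the quotient $\res_r W_r(\lambda)/N\cong Q$.

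For step (a), recall that under the embedding $A_{r\minus 1}\subset A_r$ of~\cite[2.1.7]{PB1} an $(r\minus 1)$-diagram acts through the diagram obtained by adjoining a vertical strand at position $r$. Diagrammatically this strand forces $r$ to stay a defect: multiplying a defect-$r$ basis element by any element of $A_{r\minus 1}$ can never create an arc at $r$, so $N$ is closed under the action. Deleting the defect strand at $r$ then sends such a basis element to a partial diagram on $\{1,\dots,r\minus 1\}$ with $i$ arcs and $r\minus 1\minus 2i$ defects, while restricting the attached Specht datum along $\mathfrak{S}_{r\minus 2i}\supset\mathfrak{S}_{r\minus 1\minus 2i}$. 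Since the classical branching rule gives $S^\lambda\!\downarrow_{\mathfrak{S}_{r\minus 1\minus 2i}}\cong\bigoplus_{\mu\in\RR(\lambda)}S^\mu$, this identifies $N\cong\bigoplus_{\mu\in\RR(\lambda)}W_{r\minus 1}(\mu)$.

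For step (b), in the quotient $Q$ every term in which the $A_{r\minus 1}$-action would turn $r$ into a defect is killed, which is precisely what allows the arc $(s,r)$ to be ``cut'': the map sending an arc-$r$ basis element to the partial diagram on $\{1,\dots,r\minus 1\}$ in which $r$ is deleted and its former partner $s$ is promoted to a defect is then an $A_{r\minus 1}$-module homomorphism. The target now has $i\minus 1$ arcs and $r\plu 1\minus 2i$ defects, and the new defect $s$ corresponds to inducing the Specht datum along $\mathfrak{S}_{r\minus 2i}\subset\mathfrak{S}_{r\plu 1\minus 2i}$; by the branching rule $S^\lambda\!\uparrow^{\mathfrak{S}_{r\plu 1\minus 2i}}\cong\bigoplus_{\nu\in\AAAA(\lambda)}S^\nu$ this gives $Q\cong\bigoplus_{\nu\in\AAAA(\lambda)}W_{r\minus 1}(\nu)$. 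Combining (a) and (b) yields the desired short exact sequence. The vanishing statements are then automatic from the combinatorics: if $\lambda=\varnothing$ then $i=r/2$, there are no defects, so $N=0$ (and indeed $\RR(\varnothing)=\emptyset$); if $\lambda\vdash r$ then $i=0$, there are no arcs to cut, so $Q=0$, matching the fact that every $\nu\in\AAAA(\lambda)$ has $\nu\vdash r\plu 1$, which does not lie in $\LL_{r\minus 1}$.

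The main obstacle is that, unlike for the ordinary Brauer algebra, multiplication in $A_r$ carries the \emph{periplectic signs} of~\cite{Kujawa}, and these must be tracked through both identifications. Concretely, one must check that the sign incurred when an $A_{r\minus 1}$-diagram slides past the fixed strand at $r$ (for step (a)), and the sign incurred when the arc $(s,r)$ is cut and $s$ is straightened into a defect (for step (b)), match exactly the signs appearing in the periplectic action on the cell modules $W_{r\minus 1}(\mu)$ and $W_{r\minus 1}(\nu)$. Arranging this — equivalently, normalising the basis identifications compatibly with the standardly based datum of $A_{r\minus 1}$ — is the delicate point. Once the two maps are verified to be genuine module homomorphisms, exactness is immediate from the vector-space decomposition $\res_r W_r(\lambda)=N\oplus Q$.
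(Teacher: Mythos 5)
First, a point of comparison: the paper itself contains no proof of this statement — it is imported verbatim as Corollary~5.24 of~\cite{PB1} (the lemma's own header says so), so the only thing to measure your argument against is the proof in \cite{PB1}, which is of the same diagrammatic nature as your reconstruction. Your outline is structurally correct and makes the right choices at every fork. The span $N$ of basis elements in which $r$ is a defect really is an $A_{r\minus 1}$-submodule: the adjoined vertical strand at position~$r$ forces the composite to keep a defect at~$r$, and joining two other defects kills the element in the cell module. Conversely, the arc-$r$ elements genuinely only survive as a quotient — an element of $A_{r\minus 1}$ containing a cap that joins the arc-partner $s$ to a defect of the half-diagram turns an arc-$r$ element into a defect-$r$ element — and you correctly place them there rather than claiming a direct sum of modules. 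The identifications via the two branching rules, $S^\lambda\!\downarrow_{\mathfrak{S}_{r\minus 1\minus 2i}}\cong\bigoplus_{\mu\in\RR(\lambda)}S^\mu$ for the submodule and $S^\lambda\!\uparrow^{\mathfrak{S}_{r\plu 1\minus 2i}}\cong\bigoplus_{\nu\in\AAAA(\lambda)}S^\nu$ for the quotient (using characteristic zero to split these), have the correct arc/defect counts, and both vanishing cases are handled correctly.

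The one genuine gap is exactly the step you flag and then defer: checking that the two identifications intertwine the $A_{r\minus 1}$-actions once the periplectic signs of \cite{Kujawa} are in play. You should not treat this as a normalisation afterthought; for $A_r$ it is essentially all of the content — it is the reason the marked diagram calculus of \cite{Kujawa} and the standardly based set-up of \cite{PB1} exist, and what you have written is, as it stands, a complete proof only of the ordinary Brauer algebra analogue. Two mitigating remarks: (1) the combinatorial labels in the statement are not at risk from sign issues, since twisting the Specht datum by the sign character merely replaces every partition by its conjugate, and both branching rules are equivariant under conjugation (compare the appearance of conjugates in \eqref{eqBGG}); (2) your reduction is organised so that the sign check is local — it concerns only the sliding of a diagram past the strand at $r$, and the straightening of a cut arc — which is exactly the form in which it is carried out in \cite{PB1}. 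So: right decomposition, right submodule/quotient directions, right combinatorics, but a complete proof must actually perform the sign verification you postpone, and that verification is where the periplectic case stops being routine.
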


By \cite[Lemma~6.2.5]{PB1}, or \cite[5.2.5 and Theorem~6.2.2]{PB1}, we can strengthen this result to include the action of~$x_r$ as follows.
\begin{prop}\label{Propalpha}
For any $\lambda\in\LL_r$, we have $\res_{r}W_r(\lambda)\;=\;\bigoplus_{q\in\mZ} W_r(\lambda)_q$.
For any $q\in\mZ$, we have a short exact sequence
$$0\;\to\;\bigoplus_{\mu\in \RR(\lambda)_q}W_{r\minus 1}(\mu)\;\to\;W_r(\lambda)_q \;\to\; \bigoplus_{\nu\in \AAAA(\lambda)_q}W_{r\minus 1}(\nu)\;\to\;0.$$
\end{prop}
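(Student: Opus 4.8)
The plan is to obtain the refined statement by decomposing every term of the short exact sequence in Lemma~\ref{CorPB1} according to the generalised eigenspaces of~$x_r$. Since all elements of~$A_{r\minus 1}$ commute with~$x_r$, the operator~$x_r$ acts $A_{r\minus 1}$-linearly on~$\res_r W_r(\lambda)$, so the decomposition $\res_r W_r(\lambda)=\bigoplus_\alpha W_r(\lambda)_\alpha$ into generalised eigenspaces is automatically one of $A_{r\minus 1}$-modules. The first assertion of the proposition, namely that only integer eigenvalues $\alpha=q\in\mZ$ occur, follows from the cited results of~\cite{PB1}, which compute the spectrum of~$x_r$ on restricted cell modules; this already yields $\res_r W_r(\lambda)=\bigoplus_{q\in\mZ}W_r(\lambda)_q$.

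The heart of the argument is to pin down, for each cell-module constituent appearing in Lemma~\ref{CorPB1}, the single generalised eigenvalue of~$x_r$ on it, and to check that the sequence is $x_r$-equivariant. First I would invoke \cite[Lemma~6.2.5]{PB1} (equivalently, the combination of \cite[5.2.5]{PB1} and \cite[Theorem~6.2.2]{PB1}) to establish that $x_r$ stabilises the submodule $\bigoplus_{\mu\in\RR(\lambda)}W_{r\minus 1}(\mu)$, and that it acts on the summand $W_{r\minus 1}(\mu)$ through a single generalised eigenvalue equal to the content~$q$ of the box removed from~$\lambda$ to obtain~$\mu$; thus this summand lies in $W_r(\lambda)_q$ exactly when $\mu\in\RR(\lambda)_q$. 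Dually, on the quotient $\bigoplus_{\nu\in\AAAA(\lambda)}W_{r\minus 1}(\nu)$ the operator~$x_r$ acts on $W_{r\minus 1}(\nu)$ with single generalised eigenvalue~$q$, where now $q\minus 1$ is the content of the box added to~$\lambda$ to obtain~$\nu$, which is precisely the condition $\nu\in\AAAA(\lambda)_q$. The shift by one between the removing and adding cases, which is exactly what makes the index conventions for $\RR(\lambda)_q$ and $\AAAA(\lambda)_q$ line up on the common eigenvalue~$q$, is the delicate point; it is the content of the Jucys--Murphy eigenvalue computation imported from~\cite{PB1}.

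With this eigenvalue bookkeeping in place, the refinement is formal. Because $x_r$ stabilises the submodule, it also acts on the quotient, and the sequence of Lemma~\ref{CorPB1} becomes a short exact sequence of $A_{r\minus 1}$-modules equipped with a commuting action of~$x_r$. Passing to the generalised $q$-eigenspace is an exact operation on such objects, since each generalised eigenspace is the image of the associated spectral idempotent, which is a polynomial in~$x_r$ and hence an $A_{r\minus 1}$-module endomorphism compatible with the sub- and quotient-terms. Applying it to the sequence and inserting the eigenvalue assignments of the previous paragraph yields, for every $q\in\mZ$,
$$0\;\to\;\bigoplus_{\mu\in \RR(\lambda)_q}W_{r\minus 1}(\mu)\;\to\;W_r(\lambda)_q \;\to\; \bigoplus_{\nu\in \AAAA(\lambda)_q}W_{r\minus 1}(\nu)\;\to\;0,$$
as desired; summing over~$q$ recovers Lemma~\ref{CorPB1}. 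I expect the main obstacle to be not the homological splitting but the eigenvalue identification itself, in particular verifying that each cell-module constituent is genuinely a single generalised eigenspace rather than being spread across several values of~$q$, together with the correct one-step shift in the adding case.
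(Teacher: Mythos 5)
Your proposal is correct and matches the paper's approach: the paper likewise obtains the proposition by strengthening Lemma~\ref{CorPB1} via the Jucys--Murphy eigenvalue results cited from~\cite{PB1} (Lemma~6.2.5, or 5.2.5 together with Theorem~6.2.2), which is exactly the eigenvalue bookkeeping and $x_r$-stability you import, with the generalised-eigenspace exactness argument being the routine formal part.
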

In particular, the special case $\lambda\vdash r$ yields
\begin{equation}\label{eqSC}
L_r(\lambda)_q\;=\;\begin{cases} L_{r\minus 1}(\mu)&\mbox{if}\quad \RR(\lambda)_q=\{\mu\},\\
0&\mbox{if}\quad \RR(\lambda)_q=\emptyset.
\end{cases}
\end{equation}

\begin{cor}\label{Coreta}Fix arbitrary $\lambda\in\LL_r$ and~$q\in\mZ$.
\begin{enumerate}[(i)]
\item For $\eta\in \AAAA(\lambda)_q$, we have $[W_r(\lambda)_q:L_{r\minus1}(\eta)]=1$.
\item For $\eta\in \AAAA(\lambda)_{q+ 1}$, we have $[W_r(\lambda)_{q\minus 1}:L_{r\minus1}(\eta)]=\begin{cases}1&\mbox{if }\RR(\lambda)_{q\minus 1}\not=\emptyset,\\
0&\mbox{otherwise.} \end{cases}$
\end{enumerate}
\end{cor}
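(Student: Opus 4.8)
The plan is to read off both multiplicities from the short exact sequence of Proposition~\ref{Propalpha}, using additivity of Jordan--H\"older multiplicities on short exact sequences. For part~(i) I apply the sequence with eigenvalue~$q$, and for part~(ii) with eigenvalue~$q\minus 1$; in each case $[W_r(\lambda)_\bullet:L_{r\minus 1}(\eta)]$ is the sum of the contributions of the submodule $\bigoplus_{\mu\in\RR(\lambda)_\bullet}W_{r\minus 1}(\mu)$ and of the quotient $\bigoplus_{\nu\in\AAAA(\lambda)_\bullet}W_{r\minus 1}(\nu)$. Since each of $\AAAA(\lambda)_\bullet$ and $\RR(\lambda)_\bullet$ has at most one element, there is at most one summand of each type, and for each I compute $[W_{r\minus 1}(\sigma):L_{r\minus 1}(\eta)]$ via Lemma~\ref{2Lem} and Proposition~\ref{PropDiff2}: by Lemma~\ref{2Lem}(i) it vanishes unless $\sigma\subseteq\eta$, and by Lemma~\ref{2Lem}(ii) it equals $[W_{|\eta|}(\sigma):L_{|\eta|}(\eta)]$, which for $\sigma\vdash|\eta|\minus 2$ is governed entirely by Proposition~\ref{PropDiff2}.

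For part~(i), with $\eta\in\AAAA(\lambda)_q$ the quotient summand is $W_{r\minus 1}(\eta)$, contributing exactly~$1$ since $\eta\neq\varnothing$ has simple top $L_{r\minus 1}(\eta)$ and its radical has strictly larger constituents. It then suffices to show the submodule contributes~$0$, i.e. $[W_{r\minus 1}(\mu):L_{r\minus 1}(\eta)]=0$ for the unique $\mu\in\RR(\lambda)_q$ (if any). Here $\mu=\lambda\setminus b_1$ with $\con(b_1)=q$ and $\eta=\lambda\cup b_2$ with $\con(b_2)=q\minus 1$, so $\mu\subset\eta$ and $\eta/\mu=\{b_1,b_2\}$. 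As $|\eta|=|\mu|\plu2$, Proposition~\ref{PropDiff2} makes the multiplicity nonzero only if $\{b_1,b_2\}$ is a horizontal rim $2$-hook, which would force $b_2$ (the smaller content) immediately to the left of the removable corner $b_1$ in the same row; but that cell already belongs to~$\lambda$, contradicting that $b_2$ is addable. Hence the submodule contributes~$0$ and the total is~$1$.

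For part~(ii), with $\eta\in\AAAA(\lambda)_{q+1}$ we have $\eta=\lambda\cup b$, $\con(b)=q$, so $|\eta|=|\lambda|\plu1$. The quotient summands are the $W_{r\minus 1}(\nu)$ with $\nu\in\AAAA(\lambda)_{q\minus 1}$; each such $\nu$ has $|\nu|=|\eta|$ but $\nu\neq\eta$ (the added boxes have different contents), so by Lemma~\ref{2Lem}(i) they contribute~$0$. Thus the answer is carried by the submodule $\bigoplus_{\mu\in\RR(\lambda)_{q\minus 1}}W_{r\minus 1}(\mu)$: if $\RR(\lambda)_{q\minus 1}=\emptyset$ the total is~$0$, as claimed. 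If $\RR(\lambda)_{q\minus 1}=\{\mu\}$ with $\mu=\lambda\setminus b_1$, $\con(b_1)=q\minus 1$, then $\eta/\mu=\{b_1,b\}$ has contents $q\minus 1$ and~$q$, and I must show this is a horizontal rim $2$-hook, so that Proposition~\ref{PropDiff2} yields~$1$.

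The hard part is precisely this positional claim (and its negation in part~(i)). I would settle it by the diagonal structure of contents: on the diagonal $\con=c$ the cells of~$\lambda$ occupy an initial segment of rows, so there is at most one removable and at most one addable box of each content. Writing $b_1=(i,j)$ with $\lambda_i=j$ and $j\minus i=q\minus 1$, the cell $(i,j\plu1)$ has content~$q$ and is empty; if the unique addable $q$-box sat in row $a_0$ with $\lambda_{a_0}=a_0\plu q\minus1$, then comparing with $\lambda_i=i\plu q\minus1$ and using that row lengths are weakly decreasing forces $a_0=i$, so that box is $(i,j\plu1)$, immediately right of~$b_1$. Hence $\{b_1,b\}$ is a horizontal rim $2$-hook and the multiplicity is~$1$. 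The same diagonal bookkeeping underlies the non-adjacency used in part~(i), so I expect the representation-theoretic reductions to be routine once these facts about addable and removable boxes of consecutive content are in place.
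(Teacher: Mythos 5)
Your proof is correct and follows essentially the same route as the paper: decompose $[W_r(\lambda)_\bullet:L_{r\minus 1}(\eta)]$ via the short exact sequence of Proposition~\ref{Propalpha}, kill or evaluate each summand using Lemma~\ref{2Lem} and Proposition~\ref{PropDiff2}, and settle the positional facts about addable/removable boxes of consecutive content. The only cosmetic difference is in part~(i), where the paper notes that the removable $q$-box and addable $q\minus 1$-box form a vertical rim $2$-hook while you argue directly that they cannot form a horizontal one (equivalent for the purpose of Proposition~\ref{PropDiff2}), and you spell out the diagonal bookkeeping that the paper leaves implicit.
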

\begin{proof}
Consider $\eta$ as in part~(i). Proposition~\ref{Propalpha} implies
$$[W_r(\lambda)_q:L_{r\minus1}(\eta)]\;=\;1+\sum_{\mu\in\RR(\lambda)_q}[W_{r\minus 1}(\mu):L_{r\minus 1}(\eta)].$$ If there exists~$\mu\in\RR(\lambda)_q$, then $\eta$ is obtained from~$\mu$ by adding a rim 2-hook {$\begin{minipage}{.35cm} \scalebox{.6}{\young(q,\qmineen)} \end{minipage}$}. This in turn implies $[W_{r\minus 1}(\mu):L_{r\minus 1}(\eta)]=0$ by Proposition~\ref{PropDiff2}. This proves part~(i).

Now consider $\eta$ as in part~(ii). Proposition~\ref{Propalpha} implies
\begin{equation}\label{eqqm1}[W_r(\lambda)_{q\minus 1}:L_{r\minus1}(\eta)]=\sum_{\mu\in \RR(\lambda)_{q-1}}[W_{r\minus 1}(\mu):L_{r\minus 1}(\eta)]+\sum_{\nu\in \AAAA(\lambda)_{q- 1}}[W_{r\minus 1}(\nu):L_{r\minus 1}(\eta)].\end{equation}
By Lemma~\ref{2Lem}(i), we have $[W_{r\minus 1}(\nu):L_{r\minus 1}(\eta)]=0$ unless $\nu=\eta$, for any $\nu\vdash r\minus 1$. However, as $\eta\in\AAAA(\lambda)_{q+1}$ and~$\nu\in\AAAA(\lambda)_{q\minus1}$, we find $\nu\not=\eta$. Hence, the second term on the right-hand side of~\eqref{eqqm1} vanishes. Thus if $\RR(\lambda)_{q\minus 1}=\emptyset$, the left-hand side of~\eqref{eqqm1} must indeed vanish. If $\RR(\lambda)_{q\minus 1}\not=\emptyset$, then $\RR(\lambda)_{q\minus 1}=\{\mu\}$, with $\eta$ obtained from~$\mu$ by adding the rim 2-hook {$\begin{minipage}{.65cm} \scalebox{.7}{\young(\qmineen \qss)} \end{minipage}$}. Proposition~\ref{PropDiff2} thus implies $[W_{r\minus 1}(\mu):L_{r\minus 1}(\eta)]=1$, which concludes the proof of part~(ii).
\end{proof}

\subsection{Restriction of simple modules}
The previous subsection completely determines the restriction from~$A_r$ to $A_{r\minus 1}$ of the simple cell modules $W_r(\lambda)=L_r(\lambda)$ with $\lambda\vdash r$, see equation~\eqref{eqSC}. In this section we will obtain some partial information of the restriction to $A_r$ of the simple modules $L_{r+1}(\nu)$ with $\nu\vdash r\minus 1$. These $L_{r+1}(\nu)$ are generally not cell modules.

\begin{lemma}\label{LemResS1}
Assume that~$\mu\vdash r$ has no addable $q+1$-box and~$\nu\in\RR(\mu)_q$, then
$$[L_{r+1}(\nu)_{q+1}:L_r(\mu)]=1.$$
\end{lemma}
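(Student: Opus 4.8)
The plan is to compare the simple module $L_{r+1}(\nu)$ with the cell module $W_{r+1}(\nu)$ of which it is the top, and then to push the comparison through the generalised eigenspace decomposition of $x_{r+1}$. Since $\nu\vdash r\minus 1$ is non-empty, there is a short exact sequence $0\to\Rad W_{r+1}(\nu)\to W_{r+1}(\nu)\to L_{r+1}(\nu)\to 0$ of $A_{r+1}$-modules. The operation of taking the generalised $(q\plu1)$-eigenspace of $x_{r+1}$ is a direct summand of $\res_{r+1}$, and since any $A_{r+1}$-linear map commutes with $x_{r+1}$ it is an exact functor into $A_r$-mod. Applying it yields a short exact sequence $0\to(\Rad W_{r+1}(\nu))_{q+1}\to W_{r+1}(\nu)_{q+1}\to L_{r+1}(\nu)_{q+1}\to 0$, whence
$$[L_{r+1}(\nu)_{q+1}:L_r(\mu)]=[W_{r+1}(\nu)_{q+1}:L_r(\mu)]-[(\Rad W_{r+1}(\nu))_{q+1}:L_r(\mu)].$$

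For the cell-module term I would first translate the hypothesis $\nu\in\RR(\mu)_q$ into $\mu\in\AAAA(\nu)_{q+1}$ via the content conventions (removing a $q$-box from $\mu$ is adding a $q$-box, i.e. a $(q\plu1)\minus1$-box, to $\nu$). Applying Corollary~\ref{Coreta}(i) with $r$ replaced by $r\plu1$, $\lambda$ by $\nu$, and eigenvalue index $q\plu1$ then gives immediately $[W_{r+1}(\nu)_{q+1}:L_r(\mu)]=1$.

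The decisive step is the vanishing of the radical term. Because $|\nu|=r\minus1$, every composition factor of $\Rad W_{r+1}(\nu)$ has the form $L_{r+1}(\kappa)$ with $|\kappa|>r\minus1$; as $|\kappa|\equiv r\plu1\ (\mathrm{mod}\ 2)$ this forces $\kappa\vdash r\plu1$. For such top-degree labels the cell and simple modules coincide, so $L_{r+1}(\kappa)_{q+1}$ is governed by the special case~\eqref{eqSC}: it is either $0$ or a single simple $L_r(\tau)$ with $\RR(\kappa)_{q+1}=\{\tau\}$. Thus $L_r(\mu)$ can occur only if $\mu\in\RR(\kappa)_{q+1}$, i.e. if $\kappa$ arises from $\mu$ by adding a $(q\plu1)$-box — which is exactly what the hypothesis that $\mu$ has no addable $(q\plu1)$-box forbids. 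Hence no such $\kappa$ exists, the radical term is $0$, and $[L_{r+1}(\nu)_{q+1}:L_r(\mu)]=1$.

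I expect the only genuine obstacle to be index bookkeeping: keeping the content shifts consistent when passing between the phrase ``addable $(q\plu1)$-box'' and the subscripts on $\AAAA(\cdot)_\bullet$ and $\RR(\cdot)_\bullet$, and making sure the eigenspace functor is applied only to top-degree labels $\kappa\vdash r\plu1$ so that \eqref{eqSC} is legitimately available. Once the ``no addable box'' hypothesis is invoked at precisely the step $\mu\notin\RR(\kappa)_{q+1}$, the two contributions are forced and the result follows.
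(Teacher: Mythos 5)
Your proof is correct and follows essentially the same route as the paper: the multiplicity $[W_{r+1}(\nu)_{q+1}:L_r(\mu)]=1$ via Corollary~\ref{Coreta}(i), then the vanishing of the radical contribution via equation~\eqref{eqSC} combined with the hypothesis that $\mu$ has no addable $q+1$-box. The only inessential difference is that you identify the radical's composition factors as top-degree labels $\kappa\vdash r+1$ using the general cell-module structure plus a parity argument, whereas the paper invokes Proposition~\ref{PropDiff2} to describe them explicitly as the simples $L_{r+1}(\lambda)$ with $\lambda$ obtained from $\nu$ by adding a horizontal rim $2$-hook.
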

\begin{proof}
As $\mu\in\AAAA(\nu)_{q+1}$, Corollary~\ref{Coreta}(i) implies 
$$[W_{r+1}(\nu)_{q+1}:L_r(\mu)]=1.$$
 It remains to be proved that~$L_r(\mu)$ cannot be a subquotient of~$M_{q+ 1}$, for $M$ the radical of~$W_{r+1}(\nu)$. By Proposition~\ref{PropDiff2}, the $A_{r+ 1}$-module $M$ is the direct sum of all simple modules $L_{r+1}(\lambda)=W_{r+1}(\lambda)$ with $\lambda\vdash r+1$ obtained by adding a rim 2-hook {$\begin{minipage}{.6cm} \scalebox{.7}{\yng(2)} \end{minipage}$} to $\nu$. If $L_r(\mu)$ appears in~$L_{r+1}(\lambda)_{q+ 1}$ for such a~$\lambda$, then, by equation~\eqref{eqSC}, $\mu$ can be obtained from~$\lambda$ by removing a~$q+1$-box. This is impossible as, by assumption, $\mu$ has no addable $q+1$-box.\end{proof}

\begin{rem}
Contrary to the restriction method in \cite[Section~3]{BrMult}, the above lemma does not extend to the claim $L_{r+1}(\nu)_{q+1}\cong L_r(\mu)$. It follows for instance easily that
$$[L_3(1)_0]=[L_2(2)]+[L_2(1,1)].$$
\end{rem}

\begin{lemma}\label{LemResS2}
Assume that~$\nu\in\RR(\mu)_q$ for $\mu\vdash r$ and that~$\nu$ has a removable $q\minus 1$-box, then
$$[L_{r+1}(\nu)_{q\minus 1}:L_{r}(\mu)]\,=\,1.$$
\end{lemma}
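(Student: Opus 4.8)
The plan is to mirror the proof of Lemma~\ref{LemResS1}, replacing the appeal to Corollary~\ref{Coreta}(i) by Corollary~\ref{Coreta}(ii). First I would note that the hypotheses translate exactly into the input of Corollary~\ref{Coreta}(ii), applied with $r$ replaced by $r+1$ and $\lambda$ replaced by $\nu$ (legitimate since $\nu\vdash(r+1)\minus 2$): the assumption $\nu\in\RR(\mu)_q$ is equivalent to $\mu\in\AAAA(\nu)_{q+1}$, and the assumption that $\nu$ has a removable $q\minus 1$-box is precisely $\RR(\nu)_{q\minus 1}\neq\emptyset$. Corollary~\ref{Coreta}(ii) then gives
$$[W_{r+1}(\nu)_{q\minus 1}:L_r(\mu)]=1.$$

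Next I would split off the simple top. Since $x_{r+1}$ commutes with $A_r$, passage to the generalised $(q\minus 1)$-eigenspace is exact, so the short exact sequence $0\to M\to W_{r+1}(\nu)\to L_{r+1}(\nu)\to 0$ with $M=\rad W_{r+1}(\nu)$ restricts to $0\to M_{q\minus 1}\to W_{r+1}(\nu)_{q\minus 1}\to L_{r+1}(\nu)_{q\minus 1}\to 0$. In view of the displayed multiplicity, it therefore suffices to prove $[M_{q\minus 1}:L_r(\mu)]=0$, which then forces $[L_{r+1}(\nu)_{q\minus 1}:L_r(\mu)]=1$, as desired.

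To control $M$, I would invoke Proposition~\ref{PropDiff2} exactly as in the proof of Lemma~\ref{LemResS1}: the composition factors of $M$ are precisely the $L_{r+1}(\lambda)$ with $\lambda\vdash r+1$ obtained from $\nu$ by adding a horizontal rim $2$-hook {$\begin{minipage}{.5cm} \scalebox{.7}{\yng(2)} \end{minipage}$}, each with multiplicity one. By equation~\eqref{eqSC}, $L_r(\mu)$ is a constituent of $L_{r+1}(\lambda)_{q\minus 1}$ if and only if $\mu\in\RR(\lambda)_{q\minus 1}$. Thus the vanishing $[M_{q\minus 1}:L_r(\mu)]=0$ reduces to the purely combinatorial claim that no $\lambda$ obtained from $\nu$ by adding a horizontal $2$-hook satisfies $\mu\in\RR(\lambda)_{q\minus 1}$.

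This last point is the heart of the matter and the only genuine obstacle. Suppose such a $\lambda$ existed. Then $\lambda\setminus\nu$ is a horizontal $2$-hook, i.e.\ two horizontally adjacent boxes $(i,j)$ and $(i,j+1)$ of consecutive contents. Since $\mu\setminus\nu$ is a $q$-box and $\lambda\setminus\mu$ is a $(q\minus 1)$-box, and both lie in $\lambda\setminus\nu$, the left box $(i,j)$ must have content $q\minus 1$ and the right box $(i,j+1)$ content $q$, with $\mu=\nu\cup\{(i,j+1)\}$. But $(i,j+1)$ can be an addable box of $\nu$ only if its left neighbour $(i,j)$ already belongs to $\nu$; as $(i,j)\in\lambda\setminus\nu$ this fails, a contradiction. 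Hence no such $\lambda$ exists, which completes the argument.
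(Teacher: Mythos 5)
Your proof is correct and follows essentially the same route as the paper's: apply Corollary~\ref{Coreta}(ii) to get $[W_{r+1}(\nu)_{q\minus 1}:L_r(\mu)]=1$, reduce to showing $L_r(\mu)$ does not occur in $M_{q\minus 1}$ for $M$ the radical of $W_{r+1}(\nu)$, identify the constituents of $M$ via Proposition~\ref{PropDiff2}, and rule them out using equation~\eqref{eqSC} and a combinatorial contradiction. Your final coordinate argument (that the right box of the horizontal $2$-hook cannot be addable to $\nu$ when its left neighbour is absent) is simply an explicit verification of the paper's closing claim that one cannot add the hook with contents $q\minus 1,q$ immediately after removing a $q$-box.
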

\begin{proof}
By assumption, $\mu\in\AAAA(\nu)_{q+1}$ and~$\RR(\nu)_{q\minus 1}\not=\emptyset$, so Corollary~\ref{Coreta}(ii) implies 
$$[W_{r+1}(\nu)_{q\minus 1}:L_r(\mu)]=1.$$
It remains to be proved that~$L_r(\mu)$ cannot be a subquotient of~$M_{q\minus 1}$, for $M$ the radical of~$W_{r+1}(\nu)$. By Proposition~\ref{PropDiff2}, the $A_{r+ 1}$-module $M$ is the direct sum of all simple modules $L_{r+1}(\lambda)=W_{r+1}(\lambda)$ with $\lambda\vdash r+1$ obtained by adding a rim 2-hook {$\begin{minipage}{.6cm} \scalebox{.7}{\yng(2)} \end{minipage}$} to $\nu$. If $L_r(\mu)$ appears in~$L_{r+1}(\lambda)_{q\minus 1}$ for such a~$\lambda$, then, by equation~\eqref{eqSC}, $\mu$ can be obtained from~$\lambda$ by removing a~$q\minus 1$-box. On the other hand, by working via~$\nu$, $\lambda$ is obtained from~$\mu$ by first removing a~$q$-box and then adding {$\begin{minipage}{.65cm} \scalebox{.7}{\yng(2)} \end{minipage}$}. In order for the two procedures to yield identical content, the latter rim 2-hook would have to be {$\begin{minipage}{.65cm} \scalebox{.7}{\young(\qmineen q)} \end{minipage}$}. However, for any partition, it is impossible to add {$\begin{minipage}{.6cm} \scalebox{.7}{\young(\qmineen q)} \end{minipage}$} directly after removing a~$q$-box.
\end{proof}

\subsection{Decategorification of the restriction functor}
Consider the abelian category
$$\cC_A\;=\;\bigoplus_{r\ge 2} A_r\mbox{-mod},$$
on which we have the exact functors
$$\RF:=\bigoplus_{r\ge 3}\res_r\quad\mbox{and}\quad \EE :=\bigoplus_{r\ge 4} e^{(r)}-\;=\; \bigoplus_{r\ge 4}\Hom_{A_r}(A_r e^{(r)},-).$$
Here $e^{(r)}$ is an idempotent in $A_r$, introduced as $c^\ast_{r\,\minus\,2}$ in \cite[Section~4.4]{PB1}, with the property $e^{(r)}A_re^{(r)}\cong A_{r\,\minus\,2}$. In particular, $e^{(r)}-$ is an exact functor from $A_r$-mod to $A_{r\,\minus\,2}$-mod which sends the simple module $L_r(\mu)$ to 0 if $\mu\vdash r$, or to $L_{r\,\minus\,2}(\mu)$ if $|\mu|<r$.

Decomposition of the restriction functors with respect to the eigenvalues of the JM elements~$x_r$ then yields exact functors $\RF_q$ for each $q\in \mZ$, so $\RF\cong \oplus_q \RF_q$ and $\RF_q(M)=M_q$. Let $\cG_A$ denote the Grothendieck group of $\cC_A$. The image in $\cG_A$ of a module $M$ will be denoted by~$[M]$. Similarly, $[F]\in\End_{\mk}(\cG_A)$ denotes the morphism induces by an exact functor $F$. We find the following analogue of~\cite[Corollary~4.4.6]{gang}.
\begin{prop}
For any $p,q\in\mZ$ with $|p-q|>1$, we have
$$[\RF_q]^2=0, \;\; [\RF_q][\RF_p]=[\RF_p][\RF_q]\;\;\mbox{and }\;\;[\RF_q][\RF_{q\pm 1}][\RF_q]=[\EE][\RF_q].$$
These are the defining relations of the infinite Temperley-Lieb algebra~$\TL_\infty(0)$, up to the appearance of $[\EE]$.
\end{prop}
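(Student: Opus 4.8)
The plan is to verify the three identities on a basis of $\cG_A$, for which I will use the classes $[W_r(\lambda)]$ of cell modules ($\lambda\in\LL_r$). The action of $[\RF_q]$ on this basis is read off from Proposition~\ref{Propalpha}: in $\cG_A$ we have $[\RF_q][W_r(\lambda)]=\sum_{\mu\in\RR(\lambda)_q}[W_{r-1}(\mu)]+\sum_{\nu\in\AAAA(\lambda)_q}[W_{r-1}(\nu)]$, so that $[\RF_q]$ is the sum of the operator $e_q$ ``remove a $q$-box'' and the operator $f_{q-1}$ ``add a $(q\minus1)$-box'', subject to the grading convention that $[W_{r-1}(\sigma)]=0$ once $|\sigma|>r-1$. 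For $[\EE]$ I will first record that $[\EE][W_r(\lambda)]=[W_{r-2}(\lambda)]$, the right-hand side being read as $0$ when $\lambda\vdash r$: expanding $[W_r(\lambda)]$ in simple classes and using that $e^{(r)}-$ annihilates $L_r(\mu)$ with $\mu\vdash r$ and sends $L_r(\mu)\mapsto L_{r-2}(\mu)$ otherwise, this follows by reassembling the resulting sum with the level-independence of multiplicities from Lemma~\ref{2Lem}(ii).

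With these formulas the assertions become combinatorial identities for $e_q$ and $f_{q-1}$ on partitions, keeping track of the grading by $r$. It is clean to pass to the Maya-diagram (charged fermion) picture, under which adding a $c$-box moves a particle from site $c\minus\tfrac12$ to site $c\plus\tfrac12$ and removing a $c$-box does the reverse; thus $[\RF_q]=e_q+f_{q-1}$ is exactly the operation of filling site $q\minus\tfrac12$ by a particle from one of its two neighbours. Any surviving term of $[\RF_q]$ leaves site $q\minus\tfrac12$ occupied, while $[\RF_q]$ can only act when that site is empty, so $[\RF_q]^2=0$ is immediate. For $|p-q|>1$ the only site that the supports of $[\RF_q]$ and $[\RF_p]$ can share is a pure source site that each merely empties, so the two act on otherwise disjoint sites and commute; equivalently all four of $[e_q,e_p]$, $[f_{q-1},f_{p-1}]$, $[e_q,f_{p-1}]$, $[f_{q-1},e_p]$ vanish for $|p-q|>1$. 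This settles the first two relations.

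For the third relation I would first establish the \emph{ungraded} identity $X_qX_{q\pm1}X_q=X_q$ for $X_q=e_q+f_{q-1}$. Expanding the product into eight terms, the triple products involving a repeated generator vanish by $e_q^2=f_{q-1}^2=0$ and the Serre relations, and the remaining terms collapse using $[e_q,f_{q-1}]=0$, $[e_q,f_q]=h_q$ and the evaluation $h_qe_q=e_q$. Most transparently, in the Maya picture the identity reduces to a computation on the four sites $q\minus\tfrac32,\,q\minus\tfrac12,\,q\plus\tfrac12,\,q\plus\tfrac32$ (and their mirror images in the $q\minus1$ case), a finite case analysis over the $2^4$ occupation patterns which I have checked yields $X_qX_{q\pm1}X_q=X_q$ exactly.

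The main obstacle is to pass from this ungraded identity to the graded statement carrying $[\EE]$. Both sides lower the level $r$ by $3$, and one verifies $[\EE][\RF_q][W_r(\lambda)]=[\RF_q][W_{r-2}(\lambda)]$; the content of the relation is therefore that the triple composite, \emph{computed with the truncation $|\sigma|>s\Rightarrow[W_s(\sigma)]=0$ applied at each of the three steps}, agrees with this. The subtlety lives entirely at the top levels $|\lambda|\in\{r,\,r-2\}$, where intermediate adding steps get truncated: at $|\lambda|=r$ only the path $e_qe_{q\pm1}e_q$ survives and it vanishes by Serre, matching $[\EE][\RF_q]=0$ there; at $|\lambda|=r-2$ exactly the two terms $e_qf_qf_{q-1}$ and $f_{q-1}f_qe_q$ of the ungraded expansion are truncated, and their sum is precisely the missing $f_{q-1}$-contribution by the operator identity $f_{q-1}=e_qf_qf_{q-1}+f_{q-1}f_qe_q$ (once more a four-site check). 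It is this boundary truncation that converts the expected $X_qX_{q\pm1}X_q=X_q$ into $[\RF_q][\RF_{q\pm1}][\RF_q]=[\EE][\RF_q]$, and checking that in every case the truncated terms are either Serre-zero or accounted for by the displayed identity is where the real work of the proof lies.
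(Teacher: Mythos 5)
Your proof is correct and takes essentially the same route as the paper: both arguments reduce the relations to the action of $[\RF_q]$ and $[\EE]$ on the spanning set of cell classes $[W_r(\lambda)]$, read off from Proposition~\ref{Propalpha} and Lemma~\ref{2Lem}(ii) (with the convention that additions are truncated at top level), and then verify the relations by tracking which box additions and removals survive. The paper compresses this tracking into one line (``the other relations are similarly checked\dots''), whereas you execute it explicitly in the Maya-diagram picture, including the boundary truncations at $|\lambda|\in\{r,r-2\}$ that account for the appearance of $[\EE]$; the only small imprecision is that your displayed identity $f_{q-1}=e_qf_qf_{q-1}+f_{q-1}f_qe_q$ pertains to the $q+1$ case, the $q-1$ case requiring instead the single-term identity $f_{q-1}e_{q-1}f_{q-1}=f_{q-1}$.
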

\begin{proof}
By Section~\ref{IntroAr}, we have a basis $\{[W_r(\lambda)]\,|\, \lambda\in\Lambda_r\}$ of the Grothendieck group of $A_r$. In particular, the set  
$$\{[W_r(\lambda)]\,|\, r\in\mZ_{\ge 2}\,,\, \lambda\in\LL_r\}$$
spans $\cG_A$. Proposition~\ref{Propalpha} implies that
$$[\RF_q]([W_r(\lambda)])\;=\;\sum_{\mu\in \RR(\lambda)_q}[W_{r\minus 1}(\mu)]+\sum_{\nu\in\AAAA(\lambda)_q}[W_{r\minus 1}(\nu)],$$
where the last term is interpreted as zero when $\lambda\vdash r$. By Lemma~\ref{2Lem}(ii), we have
$$[\EE][W_r(\lambda)]=\begin{cases}[W_{r\,\minus\,2}(\lambda)]&\mbox{if }\; |\lambda|<r,\\
0 & \mbox{if }\;\lambda\vdash r.\end{cases}$$
It thus suffices to check that these equations are consistent with the proposed relations.

We cannot remove two $q$-boxes from or add two $q-1$-boxes to a partition. It is also impossible to remove a~$q-1$-box just after adding a~$q$-box, or to add a~$q$-box just after removing a~$q-1$-box. This implies that $[\RF_q]^2=0$. The other relations are similarly checked by tracking the possibilities of adding and removing boxes with the appropriate content to a partition.
\end{proof}

\section{The set of skew Young diagrams}\label{Skew}
By a skew Young diagram $\kappa$ we mean a collection of boxes which can be interpreted as the difference of a Young diagram $\mu$ and a Young diagram $\lambda\subseteq \mu$, denoted by~$\mu/\lambda$.  Any skew Young diagram has infinitely many such interpretations. Note especially that, whenever possible, we do not fix the position of the boxes in the plane in contrast to Young diagrams.

In this section, we will introduce a set of skew partitions with three different descriptions, two iterative and one in terms of decompositions into hooks. All three descriptions will be essential to prove that this set of skew partitions determines the decomposition multiplicities of periplectic Brauer algebras in the following section. In Appendix A, we will derive a fourth description of the set, to demonstrate the connection with the recently developed arrow diagram calculus of \cite{gang}.

\subsection{Terminology and procedures on skew diagrams}\label{SecSkew}
In diagrams, we will use the terms `below', `above', `left of', and `right of' in the strict sense. A box~$b$ is thus above a box~$c$ if they are in the same column and~$b$ is in a row $i$ while $c$ is row $j$ with $j>i$. 

\subsubsection{Disjoint and connected diagrams}A skew Young diagram is connected if it is does not consist of two disjoint diagrams. Two diagrams are disjoint if there is no box of the first diagram which shares a side with a box of the second diagram.

\subsubsection{Addable and removable boxes}
An addable, resp. removable, box for a skew Young diagram is a box which can be added to, resp. removed from, the diagram such that the outcome is still a skew Young diagram.
\begin{itemize}
\item A d-addable box~$b$ of~$\kappa$ is an addable box  such that there are no boxes in~$\kappa$ to the right of or below $b$.
\item A d-removable box~$b\in\kappa$ is a removable such that there are no boxes in~$\kappa$ to the right of or below $b$.
\item A u-addable box~$b$ of~$\kappa$ is an addable box such that there are no boxes in~$\kappa$ to the left of or above $b$.
\item A u-removable box~$b\in\kappa$ is a removable box such that there are no boxes in~$\kappa$ to the left of or above $b$.
\end{itemize}

\subsubsection{Content}
From each interpretation as the difference of two Young diagrams, a skew Young diagram inherits (anti)content for its boxes. We will generally choose an arbitrary normalisation, by fixing the content of one box, which then determines the content of all other boxes. A box with content $p$ will be referred to as a~$p$-box. In Example~\ref{Ex1} we display a skew diagram with its contents for one normalisation. This skew diagram can be interpreted as the difference of the partition~$(5,5,5,3,1,1)$ and~$(3,2,2)$. The latter interpretation would of course lead to a different normalisation of the content.

\subsubsection{Hooks}
A hook is a skew diagram which has (for an arbitrary normalisation of its content) no two boxes with same content.
We will refer to the unique box in a hook with maximal, resp. minimal, content as the maximal, resp. minimal, box of the hook.
Unless specified otherwise, we will always assume hooks to be connected.

The {\em height} $\htt(\gamma)$ of a hook~$\gamma$ is the number of rows it has boxes in. The {\em width} $\wdd(\gamma)$ is the number of columns it has boxes in. The size (number of boxes) of a hook satisfies
\begin{equation}\label{sizehook}\mbox{size}(\gamma)\;=\;\htt(\gamma)+\wdd(\gamma)-1.
\end{equation}

\begin{ex}\label{Ex1}
The skew Young diagram
\abovedisplayskip-.5em
\belowdisplayskip-.5em
\[
\begin{minipage}{2.05cm} \young(:::89,::678,::567,234,1,0) \end{minipage}
\]
has 3 u-removable boxes, with content $2$, $6$ and~$8$, as well as 3 d-removable boxes, with content $0$, $4$ and~$7$.
In the following diagram we draw all d-addable, resp. u-addable boxes which would lead to connected diagrams, and label them by d, resp. u:
\abovedisplayskip-.5em
\belowdisplayskip-.5em
\[
\begin{minipage}{2.85cm} \young(:::::uX,:::u\leeg \leeg d,:::\leeg\leeg\leeg,::u\leeg\leeg\leeg,:\leeg\leeg\leeg d,:\leeg d,u\leeg,Xd) \end{minipage}.
\]
There are infinitely many more addable boxes, which are simultaneously u- and d-addable, they lead to non-connected diagrams when added on. Examples of these are the two boxes marked $X$.
\end{ex}

\begin{ddef}\label{DefProc}
Let $\kappa$ be an arbitrary skew diagram, with fixed content. For any $q\in\mZ$, we define skew diagrams~$\Pq(\kappa)$, $\oPq(\kappa)$, $\Eq(\kappa)$, $\oEq(\kappa)$ as follows.
\begin{enumerate}[(i)]
\item If $\kappa$ has a d-addable $q$-box~$b_1$ and, furthermore, $\kappa\cup\{b_1\}$ has a u-removable $q$-box~$b_2$, we set $\Pq(\kappa):=\kappa\cup\{b_1\}\backslash \{b_2\}$. In all other cases, we set $\Pq(\kappa)=\varnothing$.
\item If $\Pq(\kappa)$ allows no d-addable $q+1$-box, we set $\oPq(\kappa)=\Pq(\kappa)$, otherwise we set $\oPq(\kappa)=\varnothing$.
\item If $\kappa$ has a u-addable $q\minus 1$-box~$b_2$ and, furthermore, $\kappa\cup\{b_2\}$ has a d-addable $q$-box~$b_1$, we set $\Eq(\kappa):=\kappa\cup\{b_1,b_2\}$. In all other cases, we set $\Eq(\kappa)=\varnothing$.
\item If $\Eq(\kappa)$ allows no d-addable $q\,\minus\,1$-box, we set $\oEq(\kappa)=\Eq(\kappa)$, otherwise we set $\oEq(\kappa)=\varnothing$.
\end{enumerate}
\end{ddef}

\subsubsection{}
Loosely speaking, $\Pq(\kappa)$ is obtained from~$\kappa$ by `pushing down' all $q$-boxes one position along the diagonal, if that is possible. On the other hand, $\Eq(\kappa)$ is obtained from~$\kappa$ by `extending' $\kappa$ with a~$q\minus 1$-box on the upper rim and a~$q$-box on the lower rim, if that is possible.

\subsection{Iterative descriptions}
We define sets of skew Young diagrams by making use of the the procedures introduced in Definition~\ref{DefProc}. To apply these procedures we have to choose an arbitrary normalisation of the content in each step. The definition of the sets are not influenced by this as we will always consider the procedures for arbitrary $q\in\mZ$. In particular, we stress that the sets are to be considered as sets of skew Young diagrams which have no fixed normalisation of content or position in space.

\begin{ddef}\label{DefUp1} The set $\Upsilon$ of skew Young diagrams is determined by the following three properties:
\begin{itemize}
\item We have $\varnothing\in \Upsilon$.
\item If $\kappa\in \Upsilon$, we have $\Pq(\kappa)\in\Upsilon$, for any $q\in\mZ$.
\item If $\kappa\in \Upsilon$, we have $\Eq(\kappa)\in\Upsilon$, for any $q\in\mZ$.
\end{itemize}
\end{ddef}

\begin{ddef}\label{DefUp2} The set $\overline{\Upsilon}$ of skew Young diagrams is determined by the following three properties:
\begin{itemize}
\item We have $\varnothing\in \overline{\Upsilon}$.
\item If $\kappa\in \overline{\Upsilon}$, we have $\oPq(\kappa)\in\overline{\Upsilon}$, for any $q\in\mZ$.
\item If $\kappa\in \overline{\Upsilon}$, we have $\oEq(\kappa)\in\overline{\Upsilon}$, for any $q\in\mZ$.
\end{itemize}
\end{ddef}

Clearly we have $\overline{\Upsilon}\subseteq\Upsilon$.

\begin{ex}\label{Ex1Proc}
We have 
$\mathbf{E}_1(\varnothing)=\overline{\mathbf{E}}_1(\varnothing)= \begin{minipage}{.8cm} \scalebox{.8}{\young(01)} \end{minipage}$.
Furthermore
$$\mathbf{E}_3(\begin{minipage}{.75cm} \scalebox{.8}{\young(01)} \end{minipage})=\begin{minipage}{1.15cm} \scalebox{.8}{\young(:23,01)} \end{minipage},\quad \overline{\mathbf{E}}_3(\begin{minipage}{.75cm} \scalebox{.8}{\young(01)}\end{minipage})=\varnothing\quad\mbox{and}\qquad\mathbf{E}_{\minus 1}(\begin{minipage}{.75cm} \scalebox{.8}{\young(01)}\end{minipage})=\overline{\mathbf{E}}_{\minus 1}(\begin{minipage}{.75cm} \scalebox{.8}{\young(01)}\end{minipage})=\begin{minipage}{1.15cm} \scalebox{.8}{\young(:01,\mintwee \mineen)} \end{minipage}.$$
Applying $\Eq$ for other values of~$q$ yields either $\varnothing$ or disconnected skew diagrams consisting of two diagrams of shape {\tiny $\yng(2)$}.

We also find
$$\mathbf{P}_2(\begin{minipage}{1.1cm} \scalebox{.8}{\young(:23,01)} \end{minipage})=\overline{\mathbf{P}}_2(\begin{minipage}{1.15cm} \scalebox{.8}{\young(:23,01)} \end{minipage})=\begin{minipage}{1.1cm} \scalebox{.8}{\young(::3,012)} \end{minipage}.$$
Other d-addable boxes of {{$\begin{minipage}{1.15cm} \scalebox{.8}{\young(:23,01)} \end{minipage}$}} have either content bigger than 3 or lower than $0$, meaning there can never be a corresponding $u$-removable box, so $\Pq$ yields $\varnothing$ for $q\not=2$. We also find that $\Pq$ acting on $\begin{minipage}{1.15cm} \scalebox{.8}{\young(::3,012)} \end{minipage}$ yields $\varnothing$, for all $q\in\mZ$. On the other hand, we have
$$\mathbf{E}_5(\begin{minipage}{1.1cm} \scalebox{.8}{\young(::3,012)} \end{minipage})=\begin{minipage}{1.5cm} \scalebox{.8}{\young(::45,::3,012)} \end{minipage}\text{ and } \overline{\mathbf{E}}_{\minus 1}(\begin{minipage}{1.1cm} \scalebox{.8}{\young(::3,012)} \end{minipage})=\begin{minipage}{1.5cm} \scalebox{.8}{\young(:::3,:012,\mintwee\mineen)} \end{minipage}.$$
We also have
$$\mathbf{E}_5(\begin{minipage}{1.1cm} \scalebox{.8}{\young(:23,01)} \end{minipage})=\begin{minipage}{1.5cm} \scalebox{.8}{\young(::45,:23,01)} \end{minipage},\qquad \overline{\mathbf{E}}_5(\begin{minipage}{1.1cm} \scalebox{.8}{\young(:23,01)} \end{minipage})=\varnothing,$$
$$\mathbf{E}_1(\begin{minipage}{1.1cm} \scalebox{.8}{\young(:23,01)} \end{minipage})=\overline{\mathbf{E}}_1(\begin{minipage}{1.1cm} \scalebox{.8}{\young(:23,01)} \end{minipage})=\begin{minipage}{1.1cm} \scalebox{.8}{\young(123,012)} \end{minipage},\qquad \mathbf{E}_{\minus 1}(\begin{minipage}{1.1cm} \scalebox{.8}{\young(:23,01)} \end{minipage})=\overline{\mathbf{E}}_{\minus 1}(\begin{minipage}{1.1cm} \scalebox{.8}{\young(:23,01)} \end{minipage})=\begin{minipage}{1.5cm} \scalebox{.8}{\young(::23,:01,\mintwee \mineen)} \end{minipage},$$
while $\Eq$ for other values of~$q$ yields either $\varnothing$ or disconnected diagrams.
\end{ex}

\begin{ex}\label{Ex6}
The connected non-zero diagrams in~$\Upsilon$, or in~$\overline{\Upsilon}$ of size up to 6 are given by
\[
\begin{minipage}{.85cm} \scalebox{.8}{\young(\leeg\leeg)} \end{minipage}, \;
\begin{minipage}{1.25cm} \scalebox{.8}{\young(:\leeg\leeg,\leeg\leeg)} \end{minipage}, \; 
\begin{minipage}{1.25cm} \scalebox{.8}{\young(::\leeg,\leeg\leeg\leeg)} \end{minipage}, \;
\begin{minipage}{1.25cm} \scalebox{.8}{\young(\leeg\leeg\leeg,\leeg\leeg\leeg)} \end{minipage}, \;
\begin{minipage}{1.65cm} \scalebox{.8}{\young(::\leeg\leeg,:\leeg\leeg,\leeg\leeg)} \end{minipage}, \;
\begin{minipage}{1.65cm} \scalebox{.8}{\young(::\leeg\leeg,::\leeg,\leeg\leeg\leeg)} \end{minipage}, \;
\begin{minipage}{1.65cm} \scalebox{.8}{\young(:::\leeg,:\leeg\leeg\leeg,\leeg\leeg)} \end{minipage}, \;
\begin{minipage}{1.65cm} \scalebox{.8}{\young(:::\leeg,::\leeg\leeg,\leeg\leeg\leeg)} \end{minipage}, \;
\begin{minipage}{1.65cm} \scalebox{.8}{\young(:::\leeg,:::\leeg,\leeg\leeg\leeg\leeg)} \end{minipage}.
\]
\end{ex}

\subsection{Description in terms of rim hooks}

\subsubsection{}\label{DefCk} Consider an arbitrary connected skew Young diagram $\kappa$.
Take the collection of all boxes $b\in\kappa$ such that~$b$ is the right-most box in $\kappa$ with content $\con(b)$. This collection forms a hook, which we denote by~$\kappa^{(0)}$. The diagram $\kappa\backslash \kappa^{(0)}$ is again a skew Young diagram, possibly disconnected. We construct the outer hooks of each connected component, as above, yielding a set of hooks $\{\kappa^{(1)},\ldots,\kappa^{(l)}\}$. We remove them from~$\kappa\backslash \kappa^{(0)}$ and proceed as above. Hence, we construct a set of rim hooks (where we consider them as fixed in space) which together form $\kappa$. This set will be denoted by~$C(\kappa)$. The set $C(\kappa)$ of a disconnected skew partition is just the union of the corresponding sets for its components.

\subsubsection{}\label{DefCov} One hook~$\nu^1$ is {\em nested} in another hook~$\nu^2$ if each lower and right side of a box in $\nu^1$ is shared with another box in $\nu^1$ or $\nu^2$. 
We define a {\em covering} of a skew partition~$\kappa$ to be a decomposition of~$\kappa$ into (connected) hooks, such that any two hooks are either disjoint or nested.

\subsubsection{} By construction, $C(\kappa)$ as defined in \ref{DefCk} is a covering of~$\kappa$ in the sense of \ref{DefCov}. Moreover, one immediately verifies that a covering is unique. Assume for instance that~$\kappa$ is connected and has some covering $C$. Take any box~$b$ in $\kappa^{(0)}$ (as defined in \ref{DefCk}). It must belong to some hook~$\gamma$ in $C$. The boxes in $\kappa^{(0)}$ with content $\con(b)\pm1$ cannot belong to hooks in $C$ which allow a nesting with a hook containing $b$. Thus it follows that~$\kappa^{(0)} \subset \gamma$, hence they are equal and $\kappa^{(0)}\in C$. One proceeds iteratively and obtains that~$C$ already coincides with $C(\kappa)$.

\begin{ddef}
Let $\Gamma_0$ be the set of all (connected) hooks $\gamma$ which satisfy the following two conditions:
\begin{itemize}
\item (HW-condition) We have $\wdd(\gamma)=\htt(\gamma)+1$;
\item (D-condition) The anticontent of the minimal box in $\gamma$ is the minimal value of the anticontents of the boxes in $\gamma$. Equivalently, no box in $\gamma$ lies strictly above the positive diagonal drawn from the minimal box.
\end{itemize}
Let $\Gamma$ be the set of skew diagrams~$\kappa$, where each hook in its covering $C(\kappa)$ belongs to $\Gamma_0$. \end{ddef}

Note that, by equation~\eqref{sizehook}, the HW-condition immediately implies that any hook in $\Gamma_0$ has an even number of boxes.

\begin{ex}
The unique hook with $2k$ boxes, for $k\in\mN$, such that there are never more than two boxes on the same row or column, and such that the minimal box is alone in its column, will be referred to as a {\em staircase}. The staircases of size $2$, $4$ and~$6$ are given by
\[
\begin{minipage}{.85cm} \scalebox{.8}{\young(\leeg\leeg)} \end{minipage}, \; 
\begin{minipage}{1.25cm} \scalebox{.8}{\young(:\leeg\leeg,\leeg\leeg)} \end{minipage}, \;
\begin{minipage}{1.65cm} \scalebox{.8}{\young(::\leeg\leeg,:\leeg\leeg,\leeg\leeg)} \end{minipage}.
\]
All staircases belong to $\Gamma_0$. They correspond to those elements in $\Gamma_0$ where the D-condition is only scantly satisfied.
\end{ex}

We will require the following elementary property of the set $\Gamma$.
\begin{lemma}\label{LemGam}
Let $\kappa,\nu$ be skew diagrams, where $\kappa$ is obtained from~$\nu$ by adding two boxes {{$\begin{minipage}{.45cm} \scalebox{.8}{\yng(1,1)} \end{minipage}$}} in a column of~$\nu$ such that no boxes in $\nu$ are above or left from the added boxes. Then at most one of the skew diagrams~$\kappa,\nu$ is in $\Gamma$.
\end{lemma}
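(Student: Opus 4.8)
The plan is to translate membership in $\Gamma_0$ into a lattice-path condition and then understand how the covering changes when the vertical domino is deleted. Normalise contents so that the upper box $u$ of the domino has content $c$ and the lower box $w$ has content $c\minus 1$. A connected hook $\gamma$, read in order of increasing content, is a sequence of boxes in which consecutive boxes are joined either by a \emph{horizontal} step (move right, content $+1$) or a \emph{vertical} step (move up, content $+1$). In these terms the HW-condition says that the number of horizontal steps exceeds the number of vertical steps by exactly one, and the D-condition says that, weighting each horizontal step by $+1$ and each vertical step by $-1$, every partial sum counted from the minimal box is $\ge 0$ (a ballot condition). I will argue by contradiction, assuming $\kappa,\nu\in\Gamma$.

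First I would pin down the local picture at the domino using the hypothesis that no box of $\nu$ lies above or to the left of $u,w$. Let $\gamma$ be the hook of $C(\kappa)$ containing $u$; I claim it also contains $w$, and that $\gamma$ traverses the domino by two consecutive vertical steps. Indeed, the step $w\to u$ is vertical; the box of content $c\minus 2$ adjacent to $w$ cannot lie to the left of $w$ (nothing in $\nu$ sits there), so it lies directly below $w$ and the step entering $w$ is vertical as well; dually the box of content $c\plus 1$ adjacent to $u$ cannot lie above $u$, so the step leaving $u$ is horizontal. Thus $\gamma$ enters the domino from below by two vertical steps and exits $u$ to the right.

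The technical heart, which I expect to be the \emph{main obstacle}, is to show that removing the domino splits $\gamma$ into the hook $\gamma^-$ on the boxes of content $\le c\minus 2$ and the hook $\gamma^+$ on the boxes of content $\ge c\plus 1$, and that $\{\gamma^-,\gamma^+\}\cup(C(\kappa)\setminus\{\gamma\})$ is precisely the covering $C(\nu)$. For this I would invoke the uniqueness of coverings proved above: it suffices to check that this family is a covering of $\nu$, i.e. that $\gamma^-,\gamma^+$ are, with each other and with the untouched hooks, either disjoint or nested. Here the frontier hypothesis is exactly what guarantees that deleting $u,w$ reshapes nothing except $\gamma$. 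Granting this, $\nu\in\Gamma$ forces $\gamma^-\in\Gamma_0$ (when $\gamma^-\ne\varnothing$).

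Finally I would extract the contradiction from the ballot condition. If $\gamma^-\ne\varnothing$, then $\gamma^-\in\Gamma_0$ together with the HW-condition forces the partial sum at its maximal (content $c\minus 2$) box to equal $+1$; since $\gamma^-$ shares its minimal box and initial steps with $\gamma$, the two vertical steps into $w$ and then $u$ drop this partial sum to $-1$ at $u$, violating the D-condition for $\gamma$ and contradicting $\gamma\in\Gamma_0$. The degenerate cases are immediate: if $\gamma^-=\varnothing$ the minimal box of $\gamma$ is $w$ and the single step $w\to u$ already drives the partial sum to $-1$, so $\gamma\notin\Gamma_0$ and $\kappa\notin\Gamma$; if $\gamma^+=\varnothing$ the HW-condition for $\gamma$ would force the partial sum at the top of $\gamma^-$ to be $3\ne 1$, so $\gamma^-\notin\Gamma_0$ and $\nu\notin\Gamma$; and if both are empty then $\gamma$ is a bare vertical domino with $\wdd(\gamma)=1\ne\htt(\gamma)+1$, again failing HW. In every case at most one of $\kappa,\nu$ lies in $\Gamma$.
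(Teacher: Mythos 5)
Your reduction of the HW- and D-conditions to a ballot condition on the rim path is correct, and your treatment of the case where both added boxes lie in a \emph{single} hook of $C(\kappa)$ (including the degenerate positions of the domino at either end) is essentially sound: it parallels the paper's cases (a) and (b), where a domino at the minimal end violates the D-condition and a domino at the maximal end makes the HW-condition fail for exactly one of the two hooks. The genuine gap is at your very first structural step: the claim that the hook $\gamma\in C(\kappa)$ containing the upper box $u$ also contains the lower box $w$ is never proved, and it is false in general. Membership in a hook of the covering is a global condition (each hook consists of the right-most boxes of each content in what remains of the diagram), not a local adjacency condition, so your argument about the neighbours of $u$ and $w$ does not address it. Concretely, take $\nu=\{(1,2),(2,2)\}$, a vertical domino in column $2$, and add the domino $u=(1,1)$, $w=(2,1)$; the hypothesis of the lemma holds and $\kappa$ is the $2\times 2$ square. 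Then $C(\kappa)$ consists of the outer hook $\{(2,1),(2,2),(1,2)\}$ and the singleton $\{(1,1)\}$: the lower added box lies in the outer hook while the upper one sits alone in an inner hook. Your $\gamma$ does not contain $w$, and the splitting into $\gamma^-,\gamma^+$, the covering comparison, and the ballot computation never get off the ground.

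This missing configuration is precisely the first case of the paper's proof: the two added boxes can fall into different elements of $C(\kappa)$ (this happens exactly when the outer rim hook picks up $w$ but not $u$, because another box of content $c$ lies to the right of $u$). The paper disposes of it by parity: comparing $C(\kappa)$ with $C(\nu)$, in this situation one of the two coverings must contain a hook of odd size, whereas every hook in $\Gamma_0$ has an even number of boxes by the HW-condition and \eqref{sizehook}; hence $\kappa$ and $\nu$ cannot both be in $\Gamma$. Note that your claim does become true once one assumes, for contradiction, that both $\kappa,\nu\in\Gamma$ --- but proving that implication requires exactly this parity argument, so it cannot be taken for granted. To repair your proof you must add this case; your same-hook analysis can then serve as the second half, and it even covers a sub-case (the domino strictly in the interior of $\gamma$, so that $\gamma\setminus\{u,w\}$ is disconnected and splits into two hooks of $C(\nu)$) which the paper's dichotomy (a)/(b) passes over silently.
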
 
\begin{proof}
We start by observing that any hook in $C(\nu)$ is contained in a hook in $C(\kappa)$. Indeed, there are three possibilities for the outer rim hook~$\kappa^{(0)}$. Either it is the same as $\nu^{(0)}$, it is $\nu^{(0)}$ together with the lower of the added boxes, or it is $\nu^{(0)}$ with both added boxes. In the third case, all other hooks in the coverings of~$\kappa$ and $\nu$ coincide. In the second case, one of the connected outer rim hooks in $\kappa\backslash \kappa^{(0)}$ will consist of the second added box together and an outer rim hook of~$\nu\backslash\nu^{(0)}$, while al other elements $C(\kappa)$ and $C(\nu)$ will be identical. In the first case, one removes the rim hook~$\kappa^{(0)}=\nu^{(0)}$ and applies the above procedure to $\kappa\backslash \kappa^{(0)}$ and $\nu\backslash \nu^{(0)}$.

Assume first that the two added boxes belong to different elements of~$C(\kappa)$, which implies that either $C(\nu)$ or $C(\kappa)$ contains hooks of odd sizes. As all elements of~$\Gamma_0$ must be of even size, $\kappa$ and~$\nu$ cannot both be in $\Gamma$.

Now we assume that the two added boxes belong to the same element $\gamma\in C(\kappa)$. Hence, $\gamma$ is obtained from some $\delta\in C(\nu)$ by adding two boxes in the same column such that no boxes in $\delta$ are above or left of the added boxes. This means that the two boxes are added to $\delta$ in such a way that either 
\begin{enumerate}[(a)]
\item one of them is the minimal box in $\gamma$; %with the minimal box in $\delta$ right of the upper added box;
\item one of them is the maximal box in $\gamma$, with the maximal box in $\delta$ below the added boxes.
\end{enumerate}
In case (a), the D-condition in $\gamma$ is clearly violated, so $\kappa\not\in \Gamma$. In case (b) the HW-condition of either $\gamma$ or $\delta$ must be violated, so either $\kappa\not\in \Gamma$ or $\nu\not\in\Gamma$.
\end{proof}

\subsection{Equivalence of the three descriptions}

\begin{thm}\label{ThmEqui}
We have $\overline\Upsilon=\Upsilon=\Gamma$.
\end{thm}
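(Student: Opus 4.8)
The plan is to prove the chain of equalities $\overline\Upsilon \subseteq \Upsilon$ (which is already noted to be clear) together with $\Upsilon \subseteq \Gamma$ and $\Gamma \subseteq \overline\Upsilon$, thereby forcing all three sets to coincide. The first inclusion $\overline\Upsilon\subseteq\Upsilon$ is immediate since the barred operations are restrictions of the unbarred ones: whenever $\oPq(\kappa)=\Pq(\kappa)$ (and likewise for $\oEq$), the defining closure properties of $\Upsilon$ already produce the element. So the real content lies in the two remaining inclusions.

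For $\Upsilon \subseteq \Gamma$, I would argue by induction on the number of boxes, using the iterative generation of $\Upsilon$ from $\varnothing$ via the operations $\Pq$ and $\Eq$. The base case $\varnothing\in\Gamma$ is trivial. For the inductive step, assuming $\kappa\in\Gamma$, I must show $\Pq(\kappa)\in\Gamma$ and $\Eq(\kappa)\in\Gamma$ whenever these are nonzero. The strategy is to track how the rim-hook covering $C(\kappa)$ changes under each operation. For $\Eq$, the operation extends $\kappa$ by a $q\minus1$-box on the upper rim and a $q$-box on the lower rim; I would verify that this modifies the covering by either enlarging an existing hook in $C(\kappa)$ by two boxes in a way preserving both the HW-condition and the D-condition, or by adjoining a new minimal staircase-like hook in $\Gamma_0$. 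The geometric constraints built into the definitions of d-addable and u-addable boxes (no boxes to the right/below, resp. left/above) are exactly what guarantees the resulting hooks remain connected and satisfy the anticontent inequality of the D-condition. For $\Pq$, the \emph{push-down} of all $q$-boxes one step along the diagonal, I would check that this preserves the property of lying in $\Gamma_0$ hook-by-hook, again exploiting that the d-addable/u-removable box pair sits at the extreme corners.

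For the reverse inclusion $\Gamma\subseteq\overline\Upsilon$, I would again induct on the number of boxes, but now the argument must be \emph{reductive}: given a nonzero $\kappa\in\Gamma$, I want to exhibit it as $\oPq(\kappa')$ or $\oEq(\kappa')$ for some strictly smaller $\kappa'\in\Gamma$ (which then lies in $\overline\Upsilon$ by induction). Here I would analyze the outermost hook $\kappa^{(0)}\in\Gamma_0$ of the covering. Since $\kappa^{(0)}$ satisfies the HW-condition, it has even size; and the D-condition pins down the location of its minimal box. The natural move is to \emph{undo} an $\oEq$ or $\oPq$ step at the appropriate corner of $\kappa^{(0)}$: one shows that either the two extreme boxes of $\kappa^{(0)}$ can be stripped off (inverting $\oEq$) or the $q$-boxes can be pushed back up (inverting $\oPq$), with the barred condition — the absence of an obstructing d-addable box — being automatically satisfied precisely because of the D-condition. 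I would also use Lemma~\ref{LemGam} as the key rigidity input: it prevents both a diagram and its two-box extension from lying in $\Gamma$, which ensures the reduction step is forced and well-defined rather than ambiguous.

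The main obstacle I anticipate is the $\Gamma\subseteq\overline\Upsilon$ direction, and specifically the bookkeeping needed to show that the barred refinements $\oPq,\oEq$ (rather than the plain $\Pq,\Eq$) suffice to reconstruct every element of $\Gamma$. The unbarred operations would be easy to invert, but the paper insists on the stronger statement that $\overline\Upsilon=\Gamma$, so at each reduction I must verify the extra clause that the intermediate diagram admits \emph{no} d-addable $q\plu1$-box (for $\oPq$) or $q\minus1$-box (for $\oEq$). Matching this combinatorial side-condition against the D-condition on hooks is where the delicate case analysis will concentrate, and it is essentially the claim flagged in the introduction as ``the most intricate part of our proof.'' I would handle it by carefully correlating the anticontent of the minimal box of each hook with the available addable positions, using the connectedness and non-overlap constraints on the covering to rule out the forbidden configurations.
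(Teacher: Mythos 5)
Your treatment of $\overline\Upsilon\subseteq\Upsilon$ and of $\Upsilon\subseteq\Gamma$ (closure of $\Gamma$ under $\Pq$ and $\Eq$, proved by hook-by-hook surgery on the covering $C(\kappa)$) follows the same route as the paper and is fine in outline. The genuine gap is in $\Gamma\subseteq\overline\Upsilon$. You propose induction on the number of boxes, writing each nonzero $\kappa\in\Gamma$ as $\oPq(\kappa')$ or $\oEq(\kappa')$ with $\kappa'$ strictly smaller; but $\Pq$ deletes one $q$-box and adds another, so $\oPq$ \emph{preserves} the number of boxes, and a reduction $\kappa=\oPq(\kappa')$ with fewer boxes in $\kappa'$ simply does not exist. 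The failure is not hypothetical: the diagram $\kappa=\begin{minipage}{1.1cm} \scalebox{.8}{\young(::3,012)} \end{minipage}$ lies in $\Gamma$, is not in the image of $\Eq$ at all (an $\Eq$-move adds a pair of boxes of adjacent contents $q-1,q$, one u-addable on the upper rim and one d-addable on the lower rim, and no such pair can be stripped from this hook), and its only reduction is $\kappa=\overline{\mathbf{P}}_2(\begin{minipage}{1.1cm} \scalebox{.8}{\young(:23,01)} \end{minipage})$, a diagram of the same size; your induction can never reach it. The paper's Proposition~\ref{Prop2} separates the two issues: it proves only that every nonzero $\kappa\in\Gamma$ admits \emph{some} reduction $\kappa=\oPq(\tilde\kappa)$ or $\kappa=\oEq(\tilde\kappa)$ with $\tilde\kappa\in\Gamma$, and then argues termination of the iteration by a different measure: backwards $\oEq$-steps are bounded in number by the size of $\kappa$, while backwards $\oPq$-steps fix the left-most and right-most boxes of $\kappa$ and hence can also occur only finitely often. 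Some such two-tier termination argument is indispensable and is missing from your plan.

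Second, the reduction step itself cannot be localized at a corner of the outer hook $\kappa^{(0)}$, and Lemma~\ref{LemGam} is not the relevant rigidity input. That lemma concerns adding two boxes stacked in one column, whereas the two boxes involved in an $\Eq$-move are never in the same column; accordingly the paper does not invoke Lemma~\ref{LemGam} anywhere in the proof of Theorem~\ref{ThmEqui} (it is used only later, for Corollary~\ref{CorGam} in Section~\ref{SecCell}). What the paper actually needs (Lemma~\ref{Lemdq1}, Proposition~\ref{Propplus}, and the proof of Proposition~\ref{Prop2}) is a global analysis of all the nested hooks $\gamma^0,\gamma^1,\ldots,\gamma^k$ meeting a given diagonal: one chooses $q$ minimal such that $\kappa$ has a d-removable $q$-box but no d-addable $q+1$-box; when the $\oPq$-undo is obstructed, one chooses $r<q$ maximal with a d-removable $r$-box but no d-addable $r-1$-box, shows the innermost hook on that diagonal is a staircase, and performs an $\overline{\mathbf{E}}_r$-undo whose two removed boxes sit in an \emph{inner} hook while the outer hooks are simultaneously modified by pushing boxes up their diagonals. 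The barred side-conditions are verified precisely through these extremal choices of $q$ and $r$; they are not automatic consequences of the D-condition, as you suggest. Without this multi-hook case analysis your sketch does not establish $\Gamma\subseteq\overline\Upsilon$.
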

This theorem follows from the subsequent Propositions~\ref{Prop1} and \ref{Prop2}, and the obvious inclusion~$\overline{\Upsilon}\subseteq\Upsilon$.

\begin{lemma}
If $\nu\in\Gamma$, then both $\Eq(\nu)$ and~$\Pq(\nu)$ are in~$\Gamma$, for all $q\in\mZ$.
\end{lemma}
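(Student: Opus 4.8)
The plan is to show that applying either procedure $\Eq$ or $\Pq$ to a diagram $\nu \in \Gamma$ produces a diagram whose unique covering again consists entirely of hooks in $\Gamma_0$. The key observation is that both procedures act \emph{locally}: they either return $\varnothing$ (which is vacuously in $\Gamma$) or they modify $\nu$ by touching boxes of content $q{-}1$, $q$, and $q{+}1$ only. So I would first analyze, for each procedure, exactly which hook(s) of the covering $C(\nu)$ are affected, argue that the modification is confined to a single hook (or the insertion of one new hook), and then verify that the affected hook still satisfies the HW-condition and the D-condition. Because the covering is unique (as established in the discussion after~\ref{DefCov}), it suffices to track this single local change.

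First I would handle $\Eq$. Recall $\Eq(\nu)$ adds a u-addable $q{-}1$-box $b_2$ and then a d-addable $q$-box $b_1$ to $\nu \cup \{b_2\}$. The box $b_1$ sits on the lower rim and $b_2$ on the upper rim, and they have consecutive contents $q{-}1$ and $q$. I would argue that these two boxes attach to a single outer rim hook $\delta$ of the covering $C(\nu)$ (or, in the degenerate case, form a brand new connected component which is itself a single staircase-type hook), producing a hook $\gamma \supseteq \delta$ with two extra boxes. Adding the maximal box $b_1$ of content $q$ on the lower rim and the box $b_2$ of content $q{-}1$ immediately below-and-left increases $\wdd$ and $\htt$ each by one, so equation~\eqref{sizehook} and the inductive HW-condition $\wdd(\delta)=\htt(\delta)+1$ give $\wdd(\gamma)=\htt(\gamma)+1$, preserving HW. For the D-condition, the crucial point is that $b_2$ is u-addable and sits at content $q{-}1$, which is \emph{not} below the minimal box of $\delta$; I would check that the new minimal box still has minimal anticontent among the boxes of $\gamma$, using that the extension happens on the upper-left of the existing hook rather than pushing a box strictly above the governing diagonal.

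Next I would handle $\Pq$, which ``pushes down'' the $q$-boxes by adding a d-addable $q$-box $b_1$ and removing a u-removable $q$-box $b_2$. This is a content-preserving move that slides the relevant hook one step along the diagonal without changing its height, width, or size; since $\wdd$ and $\htt$ are unaffected, the HW-condition survives automatically, and the D-condition survives because the whole configuration is merely translated along the positive diagonal, which preserves the relation between the minimal box and the governing diagonal. The main obstacle I anticipate is the bookkeeping in the D-condition: I must rule out the possibility that the added box $b_1$ of content $q$ lands \emph{strictly above} the diagonal through the minimal box of its hook, or that removing $b_2$ creates a hook whose new minimal box violates minimality of anticontent. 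This is exactly the kind of configuration forbidden by Lemma~\ref{LemGam}, so I expect to invoke that lemma (or its proof technique) to exclude the bad cases; concretely, the ``no boxes above or left'' hypothesis there matches the u-/d-addability constraints built into Definition~\ref{DefProc}, and I would use it to confirm that the one modified hook lands in $\Gamma_0$ while all other hooks of the covering remain untouched and hence remain in $\Gamma_0$ by the assumption $\nu \in \Gamma$.
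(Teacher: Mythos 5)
Your proposal breaks down at its central structural claim: that $\Pq$ and $\Eq$ modify ``a single hook (or insert one new hook)'' of the covering $C(\nu)$. In general they affect \emph{every} hook containing a $q$-box. The $q$-boxes $b_1,\ldots,b_r$ of $\nu$, read from top-left to bottom-right, lie in $r$ distinct hooks $\gamma_1,\ldots,\gamma_r$ of the covering; the box removed by $\Pq$ is the u-removable (hence topmost) $q$-box, sitting in the innermost of these hooks, while the d-addable box is attached below $b_r$, at the outermost one. Since the covering is unique (as you yourself note), you cannot choose a covering that localises the change: the only valid covering of $\Pq(\nu)$ is produced by the cascading swap in which \emph{each} $\gamma_i$ is replaced by $(\gamma_i\setminus\{b_i\})\cup\{b_{i+1}\}$, where $b_{r+1}$ is the newly added box. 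This cascade is the heart of the paper's proof; the verification that HW and D survive uses that each $\gamma_i$ necessarily also contains the $(q{-}1)$-box below $b_i$ and the $(q{+}1)$-box to its right, so the swap touches neither the minimal nor the maximal box of $\gamma_i$ and only increases anticontent. The same cascade is needed for $\Eq$, with the additional step that the two leftover boxes (the old topmost $q$-box $b_1$ and the new $(q{-}1)$-box $a_0$) either form a new $2$-hook nested in $\gamma_1$ or are adjoined to an inner hook $\gamma_0$.

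A concrete counterexample to your single-hook picture: let $\nu$ be the staircase of size $4$ and apply $\Eq$ so as to obtain the $2\times 3$ rectangle $\yng(3,3)$ (this is exactly the computation $\mathbf{E}_1$ of the staircase in Example~\ref{Ex1Proc}). The two added boxes sit at opposite corners of the result, one u-addable at the top left and one d-addable at the bottom right; they are not adjacent, contrary to your description of $b_2$ lying ``immediately below-and-left'' of $b_1$. Adjoining both of them to the unique hook of $\nu$ duplicates two contents, so the result is not a hook at all; the true covering of the rectangle is a $4$-hook plus a nested $2$-hook, and \emph{neither} contains both added boxes. Hence your bookkeeping ``$\wdd$ and $\htt$ each increase by one on a single hook $\delta$'' never gets started. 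Two further problems: for $\Pq$, the configuration is not ``merely translated along the positive diagonal'' --- only the $q$-boxes move, one per hook, while everything else stays fixed; and your plan to secure the D-condition via Lemma~\ref{LemGam} cannot work, because that lemma is purely an exclusion statement (at most one of two diagrams differing by a vertical domino lies in $\Gamma$) and therefore can never certify the membership $\Pq(\nu)\in\Gamma$ that the present lemma requires.
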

\begin{proof}
Set $\kappa=\Pq(\nu)$ and restrict to the non-trivial case $\kappa\not=\varnothing$. Denote by~$b_1,\ldots,b_r$ the $q$-boxes of~$\nu$, ordered from top left to bottom right.
 Thus $\Pq$ will delete the box~$b_1$ and add a new box~$b_{r+1}$ on the $q$-diagonal below $b_r$. Note that since $b_{r+1}$ is an addable box, $\nu$ contains a~$q\,\minus\,1$-box~$a_r$ directly below $b_r$ and a~$q+1$-box~$c_r$ directly to the right of~$b_r$. Since $\nu$ is a skew shape, the same holds for boxes $a_i$ and~$c_i$ next to the box~$b_i$ for $1 \leq i < r$. As $\nu \in \Gamma$, it can be covered by nested hooks. Thus there are hooks $\gamma_1,\ldots,\gamma_r$ such that the box~$b_i$ is contained in~$\gamma_i$ for all $i$. Since these hooks only share faces with hooks in which they are either nested or that are nested inside them, this implies that also $a_i$ and~$c_i$ are contained in~$\gamma_i$ for all $i$. This is evident for $a_r$ and~$c_r$ and then follows successively for all others. Thus we can easily cover $\kappa$ by hooks by leaving all hooks except $\gamma_1,\ldots,\gamma_r$ unchanged and for each $\gamma_i$, we delete the box~$b_i$ and add the box~$b_{i+1}$. As we do not touch the minimal and maximal boxes of $\gamma_i$ the HW-condition remains satisfied and as we only push boxes down the diagonal, also the D-condition remains satisfied. So the covering of~$\kappa$ is by hooks of~$\Gamma_0$ and thus proves the claim.

Now set $\kappa=\Eq(\nu)$ and assume again $\kappa\not=\varnothing$. As in the previous situation we use labels $a_1,\ldots,a_r$ for the boxes on the $q\,\minus\,1$-diagonal of~$\nu$ and~$b_1,\ldots,b_r$ for those on the $q$-diagonal. That the number of boxes is the same is due to the fact that procedure $\Eq$ implies that there is a u-addable box~$a_0$ and a d-addable box~$b_{r+1}$. So we also find that every $a_i$ is directly below $b_i$, for $1\le i\le r$. As $b_{r+1}$ is d-addable to $\nu$, this also implies that there is a~$q+1$-box~$c_r$ to the right of~$b_r$, which in turn implies that such a box~$c_i$ exists for to the right of every $b_i$ for $1 \leq i \leq r$. As before denote by~$\gamma_1,\ldots,\gamma_r$ the hooks in the covering of~$\nu$ such that~$b_i$ is contained in~$\gamma_i$. The same argument as above gives that also $a_i$ and~$c_i$ are contained in~$\gamma_i$. We can thus modify the hooks in the same way and delete $b_i$ from~$\gamma_i$ and add $b_{i+1}$ to $\gamma_i$. In contrast to the previous situation this leaves $b_1$ and~$a_0$ as the only boxes in~$\kappa$ not contained in a hook. If the box directly above $b_1$ is not contained in~$\kappa$ we just add these two as a 2-hook $\begin{minipage}{.75cm} \scalebox{.8}{\young(\leeg\leeg)} \end{minipage}$, nested inside $\gamma_1$ by construction. If the box directly above $b_1$ is contained in~$\kappa$, it is also contained in a hook~$\gamma_0$ nested inside $\gamma_1$. In this case we add both boxes to this hook  $\gamma_0$ which still satisfies both the HW-condition and the D-condition on the hooks. Thus the claim is also proved in this case.
\end{proof}
This lemma implies immediately the following statement.
\begin{prop}\label{Prop1}
We have $\Upsilon \subseteq \Gamma$.
\end{prop}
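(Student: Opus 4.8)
The plan is to exploit the fact that, by Definition~\ref{DefUp1}, the set $\Upsilon$ is the \emph{smallest} set of skew Young diagrams that contains $\varnothing$ and is closed under the operations $\Pq$ and $\Eq$ for every $q\in\mZ$. Consequently, to prove $\Upsilon\subseteq\Gamma$ it suffices to verify that $\Gamma$ itself enjoys these three closure properties: any set of skew Young diagrams satisfying them automatically contains the minimal such set~$\Upsilon$.

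First I would dispatch the base case $\varnothing\in\Gamma$. This is immediate, since the empty skew diagram has empty covering $C(\varnothing)=\emptyset$, so the defining condition for membership in $\Gamma$ — that every hook in the covering lie in $\Gamma_0$ — holds vacuously.

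The two remaining closure properties are precisely the content of the preceding lemma, which asserts that $\kappa\in\Gamma$ implies $\Pq(\kappa)\in\Gamma$ and $\Eq(\kappa)\in\Gamma$ for all $q\in\mZ$. Combining this with the base case, a routine induction on the number of applications of $\Pq$ and $\Eq$ needed to produce a given element of $\Upsilon$ from~$\varnothing$ shows that each such element lies in $\Gamma$. Equivalently, $\Gamma$ is one of the sets satisfying the defining properties of Definition~\ref{DefUp1}, and therefore contains $\Upsilon$ by minimality.

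I do not expect any genuine obstacle here: all the substantive work has already been carried out in the lemma, whose proof tracks how the covering $C(\kappa)$ transforms under the box-pushing and extension procedures while preserving the HW- and D-conditions. The only point requiring (minimal) care is reading Definition~\ref{DefUp1} as a closure/minimality characterisation, which is exactly what licenses the single structural induction and makes the implication immediate.
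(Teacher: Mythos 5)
Your proposal is correct and follows exactly the paper's route: the paper likewise deduces Proposition~\ref{Prop1} immediately from the preceding lemma (closure of $\Gamma$ under $\Pq$ and $\Eq$), with the base case $\varnothing\in\Gamma$ and the reading of Definition~\ref{DefUp1} as generating the smallest such set left implicit. Your write-up merely makes the structural induction explicit, which is fine.
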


Now we start the proof of the inclusion~$\Gamma\subseteq\overline\Upsilon$

\begin{lemma}\label{Lemdq1}
Take~$\delta\in \Gamma_0$ nested in $\gamma\in\Gamma_0$. Assume that~$\gamma$ has a d-removable $q$-box, but no d-addable $q+1$-box, and that~$\delta$ contains a~$q$-box. Then the $q$-box in $\delta$ is d-removable and either
\begin{enumerate}[(i)]
\item $\delta$ allows no d-addable $q+1$-box and contains the shape $\begin{minipage}{.75cm} \scalebox{.8}{\young(:\qpluseen,\qmineen \qss)} \end{minipage}$; or
\item $\delta$ contains no $q+1$-box, but contains the shape $\begin{minipage}{.75cm} \scalebox{.8}{\young(\qmineen \qss)} \end{minipage}$.
\end{enumerate}
\end{lemma}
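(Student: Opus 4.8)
The plan is to first read off the precise local shape of $\gamma$ around its d-removable $q$-box, and then to transport this information to $\delta$ through the nesting condition.

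Write $x=(i_0,j_0)$ for the d-removable $q$-box of $\gamma$, so $\con(x)=q$ and neither $(i_0,j_0+1)$ nor $(i_0+1,j_0)$ lies in $\gamma$. I would first record a general fact: the minimal box of any hook in $\Gamma_0$ is never d-removable, since its unique neighbour has content one larger and, by the D-condition, cannot sit above it (that would give a strictly smaller anticontent), hence it lies to its right. Thus $x$ is not the minimal box, and since consecutive contents in a hook occupy adjacent boxes, $\gamma$ contains a $(q-1)$-box, necessarily at $(i_0,j_0-1)$ (it cannot be below $x$). Writing $M$ for the maximal content of $\gamma$: if $q<M$ the same adjacency places a $(q+1)$-box at $(i_0-1,j_0)$, whereas if $q=M$ there is no $(q+1)$-box; in either situation $i_0\ge 2$. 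Finally the hypothesis that $\gamma$ has no d-addable $(q+1)$-box forces $(i_0-1,j_0+1)\notin\gamma$, since otherwise the box to the right of $x$ would be d-addable; and when $q<M$ this shows that the $(q+2)$-box of $\gamma$, if present, must sit at $(i_0-2,j_0)$, directly above the $(q+1)$-box.

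Next I would locate the $q$-box $y=(i_1,j_1)$ of $\delta$ and prove it is d-removable. Nesting gives, for every $(i,j)\in\delta$, that $(i+1,j)$ and $(i,j+1)$ lie in $\delta\cup\gamma$. I claim neither the box below $y$ nor the box to its right lies in $\delta$. If the $(q-1)$-box lay below $y$, I would inspect the box to the right of $y$: by nesting it lies in $\delta$ or in $\gamma$; if in $\delta$ its own nesting forces the lower-right neighbour $(i_1+1,j_1+1)$, which has content $q$, to equal $x$ (it cannot lie in $\delta$, as a hook has distinct contents), hence $y=(i_0-1,j_0-1)$; if in $\gamma$ it must be $\gamma$'s $(q+1)$-box, again giving $y=(i_0-1,j_0-1)$. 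In both situations the box below $y$ then coincides with $\gamma$'s $(q-1)$-box $(i_0,j_0-1)$, contradicting disjointness. Similarly, if the $(q+1)$-box lay to the right of $y$, nesting forces its lower-right neighbour to be $x$, so $y=(i_0-1,j_0-1)$ and this right neighbour is $(i_0-1,j_0)$; for $q<M$ it collides with $\gamma$'s $(q+1)$-box, while for $q=M$ repeated use of the nesting condition propagates $\delta$ indefinitely rightward along row $i_0-1$ and produces a lower edge that neither $\delta$ nor $\gamma$ can cover — the crucial finiteness contradiction. Hence $y$ has no box of $\delta$ below it or to its right, so it is d-removable; moreover its lower neighbour, forced into $\gamma$, is $\gamma$'s $(q-1)$-box, pinning down $y=(i_0-1,j_0-1)$.

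With $y=(i_0-1,j_0-1)$ fixed I would conclude as follows. The $(q+1)$-box of $\delta$, if any, can now only sit above $y$, so the D-condition forbids $y$ from being $\delta$'s minimal box (that neighbour would have strictly smaller anticontent); hence $\delta$ contains a $(q-1)$-box, which, not lying below $y$, must lie immediately to its left. This gives the common part of the shapes in (i) and (ii), and the dichotomy is precisely whether $\delta$ contains a $(q+1)$-box (then above $y$, case (i)) or not (case (ii)). It then remains, in case (i), to show $\delta$ has no d-addable $(q+1)$-box: the only candidate is the box at $(i_0-1,j_0)$ to the right of $y$, and it is addable only if $\delta$ has a $(q+2)$-box at $(i_0-2,j_0)$, which I would exclude using the first step — for $q<M$ that cell is already occupied by $\gamma$'s $(q+2)$-box, and for $q=M$ the rightward-propagation argument again yields an uncoverable edge. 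I expect the main obstacle throughout to be exactly this interplay between the nesting condition and the hypothesis that $\gamma$ has no d-addable $(q+1)$-box: one must repeatedly exploit that $\gamma$ plugs precisely the exposed lower-right edges of $\delta$, and invoke the finiteness of $\delta$ to terminate the otherwise-forced horizontal propagation.
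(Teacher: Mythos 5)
Your proof follows essentially the same route as the paper's --- determine the forced local shape of $\gamma$ around its d-removable $q$-box, use the nesting condition to pin $\delta$'s $q$-box $y$ to the cell $(i_0-1,j_0-1)$ and show it is d-removable, then split according to whether $\delta$ has a $(q+1)$-box --- and your Steps 1 and 2 are correct (indeed more detailed than the paper's ``clearly d-removable''). However, there is a genuine gap in Step 3, case (i). To rule out a d-addable $(q+1)$-box you must exclude a $(q+2)$-box of $\delta$ at $(i_0-2,j_0)$, and for $q<M$ you do so by asserting that this cell ``is already occupied by $\gamma$'s $(q+2)$-box''. That is false when $M=q+1$: then $\gamma$ has a $(q+1)$-box above $x$ but no $(q+2)$-box at all (your own Step 1 correctly carries the caveat ``if present''), the cell $(i_0-2,j_0)$ is free, and the collision argument evaporates. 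This sub-case must instead be killed by the same propagation/finiteness argument you use for $q=M$: if $\delta$ had a $(q+2)$-box there, nesting would force a $(q+3)$-box of $\delta$ at $(i_0-2,j_0+1)$ (it cannot belong to $\gamma$, whose contents stop at $q+1$), and the lower side $(i_0-1,j_0+1)$ of that box, of content $q+2$, could then be covered by neither hook. So the gap is fixable with a tool you already have, but as written your case split ``$q<M$ versus $q=M$'' is not the dichotomy that matters at this point, which is ``$\gamma$ has a $(q+2)$-box versus it does not''.

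Two further remarks. The phantom case $q=M$ that you carry through the whole proof cannot occur: if $\gamma$'s $q$-box were maximal, all of $\gamma$ would lie weakly below row $i_0$ (walk the hook backwards from its maximal box, moving only left and down), so the box to the right of $x$ would be d-addable, contradicting the hypothesis. This is exactly the paper's opening move; adopting it removes both of your propagation arguments in Step 2 and collapses Step 3 to the single dichotomy above. Finally, in case (ii) your stated reason that $y$ is not $\delta$'s minimal box presupposes that $\delta$ has a $(q+1)$-box (``if any''); when it has none, you should instead invoke your own Step-1 observation --- the minimal box of a $\Gamma_0$-hook is never d-removable, and $y$ is d-removable --- after which the $(q-1)$-box to the left of $y$ follows as you say.
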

\begin{proof}
As $\gamma$ has a d-removable $q$-box, but no d-addable $q+1$-box, $\gamma$ must contain a~$q+1$-box above its $q$-box, but no $q+2$-box right of the $q+1$-box. The D-condition then implies that the $q$-box cannot be minimal, so there must also be a~$q\,\minus\,1$-box left of the $q$-box.
Hence $\gamma$ contains the shape
\[
\begin{minipage}{.8cm} \scalebox{.8}{\young(:\qpluseen,\qmineen \qss)} \end{minipage},
\]
without a~$q+2$-box to the right of the $q+1$-box. Any $q$-box in~$\delta$ is thus clearly d-removable.

Assume first that~$\delta$ contains a~$q$-box, but allows no d-addable $q+1$-box. Just like we did for $\gamma$ we can show that~$\delta$ contains $\begin{minipage}{.75cm} \scalebox{.8}{\young(:\qpluseen,\qmineen \qss)} \end{minipage}$, which means we are in situation (i).

 Now assume that~$\delta$ contains a~$q$-box, and also has a d-addable $q+1$-box. We prove that the two conditions in (ii) are satisfied.
If $\delta$ would contain a~$q+1$-box above its $q$-box, then it needs to contain a~$q+2$-box right of this $q+1$-box in order to allow a~$d$-addable $q+1$-box. However, as $\delta$ is nested in~$\gamma$, this would require $\gamma$ to have a~$q+2$-box right of its $q+1$-box, which is not the case, a contradiction. In particular we find that the $q$-box in~$\delta$ is the maximal box. As $\delta$ cannot just be one box, there must be a~$q\,\minus\,1$-box. As the $q\,\minus\,1$-box below the $q$-box in~$\delta$ already belongs to $\gamma$, this must be the $q\,\minus\,1$-box left of the $q$-box.
\end{proof}

\begin{prop}\label{Propplus}
If $\kappa\in\Gamma$ has a d-removable $q$-box but no d-addable $q+1$-box, then one of the following is true
\begin{enumerate}[(i)]
\item $\kappa=\oPq(\tilde\kappa)$ for some $ \tilde\kappa\in\Gamma$;
\item $\kappa=\oEq(\tilde\kappa)$ for some $ \tilde\kappa\in\Gamma$;
\item $\kappa$ has a d-addable $q\,\minus\, 1$-box and the highest $q$-box in~$\kappa$ is maximal in its hook in~$C(\kappa)$.
\end{enumerate}
\end{prop}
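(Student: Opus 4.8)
The plan is to make the entire argument turn on a single dichotomy, namely whether the highest $q$-box $b$ of $\kappa$ is maximal in its hook of $C(\kappa)$ --- this is precisely the condition distinguishing (iii). First I would record the geometry forced by the two hypotheses. Because a nested hook lies up and to the left of the hook it is nested in, the highest $q$-box $b$ sits in the \emph{innermost} hook meeting the $q$-diagonal, while the d-removable $q$-box, call it $\beta$, is the lowest one and sits in the \emph{outermost} such hook $\gamma$. Since nothing of $\kappa$ lies to the right of or below $\gamma$, the hypothesis that $\kappa$ has no d-addable $q+1$-box passes to $\gamma$, so $\gamma$ has a d-removable $q$-box but no d-addable $q+1$-box. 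Running the opening analysis of the proof of Lemma~\ref{Lemdq1} on $\gamma$, and then Lemma~\ref{Lemdq1} itself on every nested hook meeting the $q$-diagonal, pins down the local shape at each $q$-box: each is d-removable and is of local type (i) or (ii) of that lemma. A short nesting argument, using that a hook carries at most one box of each content, shows that type (ii) (a maximal $q$-box with only a $q-1$-box to its left) can occur for \emph{only} the innermost $q$-hook; hence $b$ maximal in its hook is equivalent to the innermost $q$-hook being of type (ii), and $b$ non-maximal forces \emph{every} $q$-box to carry a $q+1$-box directly above it and a $q-1$-box directly to its left.

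\textbf{Case $b$ non-maximal.} Here I would prove (i) by exhibiting $\kappa$ as lying in the image of $\Pq$, i.e. by translating the whole segment of $q$-boxes one step up the diagonal. By the previous paragraph every $q$-box is the non-maximal outer corner of its hook with a $q+1$-box above and a $q-1$-box to its left; deleting each $q$-box and inserting its diagonal predecessor therefore modifies each affected hook without touching its minimal or maximal box, so $\htt$ and $\wdd$ are unchanged and the HW- and D-conditions survive. Thus the resulting $\tilde\kappa$ lies in $\Gamma$. One then reads off $\Pq(\tilde\kappa)=\kappa$ directly from Definition~\ref{DefProc}, and, as $\kappa$ has no d-addable $q+1$-box, this upgrades to $\oPq(\tilde\kappa)=\kappa$, giving (i). Note that $b$ being non-maximal is exactly what makes the top of the translation legal, and it rules out (ii) and (iii), which both require $b$ maximal.

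\textbf{Case $b$ maximal.} If $\kappa$ has a d-addable $q-1$-box then (iii) holds by hypothesis and there is nothing more to do. Otherwise I would prove (ii) by reversing the effect of $\Eq$, exactly as it is described in the lemma preceding Proposition~\ref{Prop1}: one removes the two boxes $\Eq$ would have added, namely the lowest $q$-box (which is $\beta$, d-removable by hypothesis) and the top-most $q-1$-box (which is u-removable because $b$ is maximal in a hook of $\Gamma_0$, so that hook has even size $\ge 2$ and its penultimate box has content $q-1$), and slides the affected hooks back one step up the diagonal to obtain $\tilde\kappa$. The absence of a d-addable $q-1$-box is what guarantees that a $q-1$-box sits directly below each $q$-box and, at the innermost hook, that the box of content $q-2$ sits \emph{below} rather than left of the $q-1$-box; this is precisely what makes the top two removed boxes an up-step together with a right-step, so that $\htt$ and $\wdd$ each drop by one and the HW-condition is preserved. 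Hence $\tilde\kappa\in\Gamma$, $\Eq(\tilde\kappa)=\kappa$, and the same hypothesis is the barred condition, yielding $\oEq(\tilde\kappa)=\kappa$, i.e. (ii).

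\textbf{Main obstacle.} The delicate part is not deciding which boxes to move but verifying that the surgered diagram $\tilde\kappa$ genuinely lands back in $\Gamma$ --- that every modified hook still satisfies both the HW-condition and the D-condition, and that the barred side condition holds. This is compounded by the $q$-segment being spread across several nested hooks of possibly different shapes, so one must control the surgery uniformly along the whole segment at once. Lemma~\ref{Lemdq1} is exactly the instrument that forces each nested hook to have the narrow local profile around its $q$-box needed for the surgery to output hooks of $\Gamma_0$, and the key structural input that the type-(ii) profile can occur only at the innermost hook is what keeps the three cases genuinely disjoint and exhaustive. Reconciling this global translation with the local $\Gamma_0$ conditions is the heart of the proof.
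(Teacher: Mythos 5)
Your proposal has the same skeleton as the paper's proof: the dichotomy according to whether the highest $q$-box is maximal in its hook (equivalently, whether the innermost $q$-hook is of type (i) or (ii) of Lemma~\ref{Lemdq1}), the $\oPq$-preimage built by sliding all $q$-boxes one step up the diagonal, and the $\oEq$-preimage built by removing the lowest $q$-box and the topmost $q{-}1$-box. However, two of your steps are genuinely flawed.

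First, the claim that ``the hypothesis that $\kappa$ has no d-addable $q+1$-box passes to $\gamma$'' is false. Addability is not a local condition: $\kappa\cup\{b\}$ must again be a skew diagram, and boxes of $\kappa$ in a disjoint component far away can obstruct this. Concretely, let $\kappa$ consist of the two horizontal dominoes $\{(3,1),(3,2)\}$ and $\{(1,3),(1,4)\}$, i.e. $\kappa=\mu/\lambda$ with $\mu=(4,2,2)$, $\lambda=(2,2,0)$; this lies in $\Gamma$. Normalising contents by $\con(i,j)=j-i$ and taking $q=-1$, the box $(3,2)$ is a d-removable $q$-box, and $\kappa$ has no d-addable $0$-box: the only candidate $(3,3)$ is not even addable, since a skew diagram whose nonempty rows $1$ and $3$ are separated by an empty row must have row $1$ beginning strictly to the right of where row $3$ ends. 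Yet the hook $\gamma=\{(3,1),(3,2)\}$ on its own does have the d-addable $0$-box $(3,3)$. So your opening move --- running the analysis of Lemma~\ref{Lemdq1} on $\gamma$ --- is illegitimate here; in this example $\gamma$ has no $q+1$-box at all, contrary to what that analysis would produce, and the type-(i)/(ii) structure on which both of your cases rest is derived from a false premise. (The paper avoids exactly this: its case (a) is an \emph{assumption} on the hooks $\gamma^j$ themselves, not a consequence of the hypothesis on $\kappa$, and configurations like the one above are absorbed into its case (b).) It happens that in any such configuration there is only one $q$-box and it is maximal in its hook, so your ``$b$ maximal'' case would still give the right output, but establishing this requires precisely the analysis your proposal skips.

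Second, in the case ``$b$ non-maximal'' you assert that after replacing each $q$-box by its diagonal predecessor ``the HW- and D-conditions survive'', justified only by the minimal and maximal boxes being untouched. That justifies HW, but the D-condition is exactly the nontrivial point: moving a box one step up its diagonal lowers its anticontent by $2$, and for a hook merely having a $q+1$-box above and a $q-1$-box to the left of its $q$-box (which is all your structural statement records) the D-condition can break. For instance, in the staircase $\{(2,1),(2,2),(1,2),(1,3)\}$ with $q=\con(2,2)$, the slid hook $\{(2,1),(1,1),(1,2),(1,3)\}$ violates the D-condition, even though the minimal and maximal boxes are untouched. What rescues the construction is the full strength of ``no d-addable $q+1$-box'' for each hook, i.e. that no $q+2$-box lies to the right of its $q+1$-box, which forces the $q$-box to sit at least two diagonals below that of the minimal box; this is the argument the paper makes, and your proposal never invokes it. The same verification is silently needed for the slid hooks in your ``$b$ maximal'' case, where moreover the reasons you do give are off: ``a $q-1$-box sits directly below each $q$-box'' fails for the lowest (d-removable) $q$-box, and the fact that the $q-2$-box of the innermost hook sits below rather than left of its $q-1$-box follows from the HW- and D-conditions of $\Gamma_0$, not from the absence of a d-addable $q-1$-box.
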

\begin{proof}
Let $\gamma^0$ be the hook in~$C(\kappa)$ containing the the d-removable $q$-box, $\gamma^1$ the hook containing the $q$-box in the column left of the d-removable one, until we reach $\gamma^k$ containing the left-most $q$-box in~$\kappa$.

(a) Assume first that for each $\gamma^j$, its $q$-box is d-removable and that each $\gamma^j$ has no d-addable $q+1$-box. By Lemma~\ref{Lemdq1}, each $q$-box in~$\kappa$ has a~$q+1$-box above it and a~$q\minus 1$ box to its left. We thus easily find that~$\kappa=\oPq(\tilde\kappa)$ for some skew diagram $\tilde\kappa$ with a covering by hooks, which are either hooks of~$\kappa$, or of the form $\tilde{\gamma}^j$ with $\gamma^j=\Pq(\tilde{\gamma}^j)$. If $\tilde\kappa$ would not be in~$\Gamma$, there should be a~$\tilde{\gamma}^j$ which does not satisfy the D-condition. This would imply in particular that~$\gamma^j$ contains a~$q+2$-box right of its $q+1$-box, contradicting the assumption that~$\gamma^j$ has no d-addable $q+1$-box. This means we are in situation (i).

(b) Now assume that the assumption in (a) is not satisfied. Lemma~\ref{Lemdq1} implies that there is a~$\gamma^j$ which contains no $q+1$-box. Note that such a~$\gamma^j$ cannot have a hook with $q$-box nested within, hence $j=k$. Lemma~\ref{Lemdq1} implies further that~$\gamma^k$ contains a~$q\,\minus\,1$-box left of the $q$-box. By the D-condition, there is no $q\,\minus\,2$-box to the left of this $q\,\minus\,1$-box. It thus follows that~$\kappa=\Eq(\tilde\kappa)$ for some skew diagram $\tilde\kappa$. As above it follows that for $j<k$, we have $\gamma^j=\oPq(\tilde\gamma^j)$ for $\Gamma_0$ hooks $\tilde{\gamma}^j\in C(\tilde\kappa)$. Furthermore, there is a hook~$\tilde{\gamma}^k\in C(\tilde\kappa)$ with $\gamma^k=\Eq(\tilde\gamma^k)$ which by construction is in~$\Gamma_0$. As $C(\tilde\kappa)$ consists of the $\tilde\gamma^j$, for $0\le j\le k$, along with some elements of~$C(\kappa)$, we find $\tilde\kappa\in\Gamma$. 

Now assume that~$\kappa$ allows no d-addable $q\,\minus\,1$-box, by definition we then have $\kappa=\oEq(\tilde\kappa)$ and we are in situation (ii). If $\kappa$ allows a d-addable $q\,\minus\,1$-box, we are in situation (iii).
\end{proof}

\begin{prop}\label{Prop2}
We have $\Gamma\subseteq\overline\Upsilon$.
\end{prop}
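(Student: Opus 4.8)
The plan is to prove $\Gamma\subseteq\overline\Upsilon$ by induction on the number of boxes $|\kappa|$ of $\kappa\in\Gamma$, using Proposition~\ref{Propplus} as the mechanism that peels one layer off $\kappa$. The base case $\kappa=\varnothing$ is immediate, since $\varnothing\in\overline\Upsilon$ by definition. For nonzero $\kappa\in\Gamma$ the first task is to produce a content $q$ to which Proposition~\ref{Propplus} applies, i.e. a $q$ for which $\kappa$ has a d-removable $q$-box but no d-addable $(q+1)$-box. The natural choice is to take $q$ maximal among the contents of the d-removable boxes of $\kappa$, and I would first check that this choice indeed forbids a d-addable $(q+1)$-box, so that the hypotheses of Proposition~\ref{Propplus} are met; this uses that the partition constraints governing which far boxes are genuinely addable prevent a d-addable $(q+1)$-box from coexisting with a d-removable $q$-box once $q$ is chosen maximally.

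Once Proposition~\ref{Propplus} applies we are in one of its three cases. In case (ii) we have $\kappa=\oEq(\tilde\kappa)$ with $\tilde\kappa\in\Gamma$; since $\oEq$ adds two boxes, $|\tilde\kappa|=|\kappa|-2$, so the inductive hypothesis gives $\tilde\kappa\in\overline\Upsilon$, whence $\kappa=\oEq(\tilde\kappa)\in\overline\Upsilon$ by the defining closure property of $\overline\Upsilon$. Case (i), $\kappa=\oPq(\tilde\kappa)$ with $\tilde\kappa\in\Gamma$, is not immediately covered by the size induction, because $\oPq$ merely pushes the $q$-boxes one step down the diagonal and so preserves $|\kappa|$. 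To deal with it I would run a secondary induction: passing from $\kappa$ back to $\tilde\kappa$ moves the whole string of $q$-boxes strictly up along the diagonal, and within the bounded region occupied by a fixed planar representative of $\kappa$ this can only happen finitely often. Concretely, I would fix a planar embedding and use the total anticontent $\sum_{b}(i_b+j_b)$, which strictly decreases under the reverse of $\oPq$; the pair (size, total anticontent), ordered lexicographically, is then a well-founded quantity for the descent, with case (i) strictly decreasing the secondary term while leaving the size fixed.

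The genuinely delicate case, and the one I expect to be the main obstacle, is case (iii): here $\kappa$ admits a d-addable $(q-1)$-box and the highest $q$-box is maximal in its hook, so Proposition~\ref{Propplus} produces neither an $\oPq$- nor an $\oEq$-preimage at the content $q$. My plan is to escape case (iii) by shifting attention to the content $q-1$. Using that the highest $q$-box is maximal in its hook -- which, via the reasoning behind Lemma~\ref{Lemdq1} and the D-condition, forces a $(q-1)$-box immediately to the left of each relevant $q$-box and prevents the $q$-boxes from extending upward -- I would argue that $\kappa$ then has a d-removable $(q-1)$-box but no d-addable $q$-box, so that Proposition~\ref{Propplus} may be re-applied with $q$ replaced by $q-1$. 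Since the contents occurring in $\kappa$ are bounded below, such a re-application can land in case (iii) only finitely often: each time the working content strictly decreases, and after finitely many steps we are forced into case (i) or case (ii) and conclude as above. Verifying the precise claim \emph{in case (iii), $\kappa$ has a d-removable $(q-1)$-box and no d-addable $q$-box} is the combinatorial heart of the argument, and is where Lemma~\ref{Lemdq1}, the HW- and D-conditions on the hooks of $C(\kappa)$, and possibly Lemma~\ref{LemGam} (to exclude an incompatible vertical domino) would be brought to bear. Assembling these pieces -- the extremal choice of $q$, the three cases of Proposition~\ref{Propplus}, and the two nested terminating inductions -- yields $\kappa\in\overline\Upsilon$ and completes the proof that $\Gamma\subseteq\overline\Upsilon$.
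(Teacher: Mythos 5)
Your high-level strategy (use Proposition~\ref{Propplus} to peel $\kappa$ and induct on a well-founded quantity) is the same as the paper's, and your termination bookkeeping is fine, but two of the claims on which your argument rests are false, and each one breaks the proof. First, it is not true that taking $q$ maximal among the contents of the d-removable boxes rules out a d-addable $(q+1)$-box. The domino with contents $0,1$ is already a counterexample: its unique d-removable box has content $1$, yet a d-addable $2$-box exists; the same happens for every staircase, and for staircases \emph{no} content $q$ satisfies the hypotheses of Proposition~\ref{Propplus}, so your mechanism cannot even start on them. This is precisely why the paper's proof isolates a separate case (its case (a)), in which $\kappa$ is shown to be a staircase and is exhibited as an $\overline{\mathbf{E}}$-image of a smaller staircase; your proposal never treats this case. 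Even away from staircases the maximal content is the wrong choice: for the hook $(4,3,3)/(2,2)\in\Gamma_0$ (contents $-2,\ldots,3$) the maximal d-removable content is $3$ and a d-addable $4$-box exists, while the hypotheses of Proposition~\ref{Propplus} hold only at $q=0$. Accordingly, the paper takes $q$ \emph{minimal}, not maximal, in its hard case.

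Second, your escape from case (iii) is impossible, not merely unverified. Case (iii) presupposes that $\kappa$ has a d-addable $(q-1)$-box, and a skew diagram can never have both a d-addable box and a d-removable box of the same content: the two positions lie on one diagonal, so one is strictly north-west of the other, and in the skew diagram $\kappa\cup\{b\}$ (with $b$ the addable box) the north-west and south-east positions of the pair force the ``south-west corner'' box to lie in $\kappa$; that corner box sits directly below the d-removable box (if it is the north-west one) or directly below $b$ (if $b$ is the north-west one), contradicting d-removability, respectively d-addability. Hence the d-removable $(q-1)$-box you want to produce cannot exist, Proposition~\ref{Propplus} can never be re-applied at content $q-1$, and your ``strictly decreasing content'' descent cannot take a single step. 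The paper resolves this situation (its case (b)) by an entirely different mechanism: it takes the maximal $r<q$ such that $\kappa$ has a d-removable $r$-box but no d-addable $(r-1)$-box (existence follows from the HW-condition), and then proves directly, through the analysis of the nested hooks containing the $q$-boxes and the shape \eqref{eqqr}, that $\kappa=\overline{\mathbf{E}}_r(\tilde\kappa)$ for some $\tilde\kappa\in\Gamma$. That explicit construction is the combinatorial heart of the proof, and it is the piece missing from your proposal.
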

\begin{proof}
It suffices to prove that for any $\kappa\in\Gamma$, we can find $\tilde\kappa\in\Gamma$, such that~$\kappa=\oPq(\tilde\kappa)$ or $\kappa=\oEq(\tilde\kappa)$ for some $q\in\mZ$. Indeed, when we iterate this it becomes clear that the operators $\oEq$ can only be used a finite a number of time, as $\kappa$ only contains a finite number of boxes. On the other hand, the operators $\oPq$ leave the left-most and the right-most boxes of~$\kappa$ invariant, meaning that they can also only be applied a finite number of times. 

Now consider arbitrary $\kappa\in\Gamma$. Proposition~\ref{Propplus} already provides $\tilde\kappa$ except in the following situations:
\begin{enumerate}[(a)]
\item for every $q\in \mZ$ for which $\kappa$ contains a d-removable $q$-box, $\kappa$ allows a d-addable $q+1$-box;
\item for every $q\in \mZ$ for which $\kappa$ contains a d-removable $q$-box but no d-addable $q+1$-box, $\kappa$ has a d-addable $q\,\minus\, 1$-box and the highest $q$-box in~$\kappa$ is maximal in its hook in~$C(\kappa)$.
\end{enumerate}
In case (a), $\kappa$ must be a staircase, which can be obtained from a smaller staircase by applying~$\overline{\mathbf{E}}$. Assume therefore that~$\kappa$ is as in (b). Take $q$ the minimal value for which $\kappa$ has a d-removable $q$-box but no d-addable $q+1$-box. Then take the maximal $r<q$ such that $\kappa$ has a d-removable $r$-box but no d-addable $r\,\minus\,1$-box. Note that~$r$ must exist by the HW-condition of elements in~$\Gamma_0$. The rim hook~$\gamma^0$ in~$C(\kappa)$ which contains the d-removable $q$-box must contain the shape

\begin{equation}\label{eqqr}\begin{minipage}{1.25cm} \scalebox{.8}{\young(:::::::\qpluseen,::::::\qmineen \qss,:::::\qmindrie \qmintwee,::::\cdots\qminvier,::::\cdots,::\rpluseen,\rmintwee\rmineen \rss)} \end{minipage}\end{equation}
where there is no $q+2$-box right of the $q+1$-box. If $\kappa$ contains no other $q$-boxes, $\kappa$ is clearly of the form $\oPq(\tilde\kappa)$ for some $\tilde\kappa\in\Gamma$, so we assume existence of more $q$-boxes. Define $\gamma^1\in C(\kappa)$ as the hook containing the $q$-box next to the one displayed. As $\gamma^1$ is in~$\Gamma_0$ and must be nested in~$\gamma^0$ we find it must contain the $r\,\minus\,1$-box immediately above the displayed $r\,\minus\,2$-box.

(I) If there are no further $q$-boxes in~$\kappa$ then assumption in (b) implies that the $q$-box in~$\gamma^1$ is the maximal one. The HW-condition on~$\gamma^1$ then implies that there is no box to the left of the $r\,\minus\,1$-box in~$\gamma^1$. Since the $r\,\minus\,2$-box below the $r\,\minus\,1$-box in~$\gamma^1$ already belongs to $\gamma^0$, it follows that~$\gamma^1$ is a staircase starting at its $r\,\minus\,1$-box and ending at its $q$-box. In particular, there can be no $r\,\minus\,2$-box in~$\kappa$ next to the $r\,\minus\,1$-box in~$\gamma^1$. It then follows that~$\kappa=\overline{\mathbf{E}}_r(\tilde\kappa)$ for some skew diagram $\tilde\kappa$. 
Note that by construction there is a covering of~$\tilde\kappa$ by hooks which consists of hooks which are already in~$\kappa$, except for $\tilde{\gamma}^1$, which is obtained from~$\gamma^1$ by removing its $r\,\minus\,1$ and~$r$-box; and~$\tilde{\gamma}^0$, which is obtained from~$\gamma^0$ by pushing upwards its $r$-box. As the latter does not break the D-condition we find $\tilde\kappa\in\Gamma$.

(II) If there is another $q$-box in $\kappa$, not yet in $\gamma^0$ or $\gamma^1$, we consider the hook~$\gamma^2\in C(\kappa)$ containing it. As $\gamma^2$ is nested in $\gamma^1$, it follows that~$\gamma^1$ contains a~$q+1$-box above its $q$-box. The HW-condition on~$\gamma^1$ then implies that its $r\,\minus\,1$-box cannot be its minimal box and thus there is a~$r\,\minus\,2$-box left of the $r\,\minus\,1$-box. All of the above then implies that~$\gamma^1$ also contains a shape \eqref{eqqr}. Furthermore, if there would be a~$q+2$-box in $\gamma^1$ right of its $q+1$-box, this would contradict its nesting inside $\gamma^0$ as we already know that~$\gamma^0$ has no $q+2$-box right of its $q+1$-box. In conclusion, the hook~$\gamma^1$ satisfies all the properties of~$\gamma^0$ that we have used above.

We can thus proceed iteratively and apply the procedure in (I) in case $\gamma^2$ contains the highest $q$-box, or procedure (II) in case there are more $q$-boxes. In conclusion, if there are $k$ $q$-boxes in~$\kappa$, we find that~$\gamma^j$ for $0\le j<k$ contain a shape \eqref{eqqr}, 
 while~$\gamma^k$ must be a staircase, and there exists~$\tilde\kappa\in\Gamma$ for which $\kappa=\overline{\mathbf{E}}_r(\tilde\kappa)$.
\end{proof}

\section{Cell multiplicities}\label{SecCell}
In this section we determine the cell multiplicities of the periplectic Brauer algebra completely. We will freely use Theorem~\ref{ThmEqui} and hence always apply the definition of~$\Gamma=\Upsilon=\overline{\Upsilon}$ which is appropriate to the situation. 

\subsection{Vanishing results}
\begin{lemma}\label{CorRes}
Consider $\lambda\in\LL_r$, $\mu\vdash r$ and assume that~$[W_r(\lambda):L_r(\mu)]\not=0$. If for $q\in\mZ$, we have $\widetilde\mu\in\RR(\mu)_q$, then
there exists~$\tilde\lambda\in\RR(\lambda)_q\sqcup\AAAA(\lambda)_q$,
for which $[W_{r\minus 1}(\tilde\lambda):L_{r\minus 1}(\tilde\mu)]\not=0$.
\end{lemma}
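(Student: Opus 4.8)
The plan is to use the restriction functor together with the decomposition into generalised eigenspaces for the Jucys–Murphy element $x_r$, exactly as set up in Proposition~\ref{Propalpha}. The key observation is that both sides of the statement are about the $A_{r\minus 1}$-module structure obtained by restricting and then projecting onto the $q$-eigenspace, so I want to compare $[W_r(\lambda):L_r(\mu)]$ with multiplicities in $\res_r$ of the relevant modules after passing to the Grothendieck group. Concretely, since $[W_r(\lambda):L_r(\mu)]\neq 0$, the simple $L_r(\mu)$ occurs in $W_r(\lambda)$, and I would like to extract $L_{r\minus 1}(\tilde\mu)$ from $L_r(\mu)$ by applying the exact functor $\RF_q=(-)_q$.

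First I would note that, because $\mu\vdash r$, the cell module $W_r(\mu)=L_r(\mu)$ is simple, and equation~\eqref{eqSC} tells us precisely that $L_r(\mu)_q\cong L_{r\minus1}(\widetilde\mu)$ exactly when $\RR(\mu)_q=\{\widetilde\mu\}$, which is our hypothesis $\widetilde\mu\in\RR(\mu)_q$. So applying the exact functor $\RF_q$ to the inclusion of $L_r(\mu)$ as a subquotient of $W_r(\lambda)$ produces $L_{r\minus1}(\widetilde\mu)$ as a subquotient of $W_r(\lambda)_q$. In Grothendieck-group terms, since $[W_r(\lambda):L_r(\mu)]\neq0$ and $[\RF_q]$ is additive on $[\cC_A]$, the class $[L_{r\minus1}(\widetilde\mu)]$ appears with positive coefficient in $[\RF_q]([W_r(\lambda)])=[W_r(\lambda)_q]$.

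Next I would use Proposition~\ref{Propalpha} to unpack $[W_r(\lambda)_q]$: it has a two-step filtration with cell modules $W_{r\minus1}(\mu')$ for $\mu'\in\RR(\lambda)_q$ at the bottom and $W_{r\minus1}(\nu')$ for $\nu'\in\AAAA(\lambda)_q$ at the top. Hence $[W_r(\lambda)_q]=\sum_{\tilde\lambda\in\RR(\lambda)_q\sqcup\AAAA(\lambda)_q}[W_{r\minus1}(\tilde\lambda)]$. Since $[L_{r\minus1}(\widetilde\mu)]$ occurs with positive multiplicity in this sum, there must be at least one index $\tilde\lambda\in\RR(\lambda)_q\sqcup\AAAA(\lambda)_q$ with $[W_{r\minus1}(\tilde\lambda):L_{r\minus1}(\widetilde\mu)]\neq0$, which is exactly the claim. (Here I rely on all composition multiplicities being nonnegative integers, so a sum can only contain $[L_{r\minus1}(\widetilde\mu)]$ if one of its summands does.)

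The step I expect to be the only genuinely delicate one is the very first: justifying that $L_{r\minus1}(\widetilde\mu)$ really does appear in $W_r(\lambda)_q$, i.e.\ that the eigenspace decomposition is compatible with passing to subquotients. This is where exactness of $\RF_q$ is essential — one must know that the generalised $x_r$-eigenspace decomposition respects $A_{r\minus1}$-submodule structure, which holds because $x_r$ commutes with $A_{r\minus1}$ (by~\cite[Lemma~6.1.2]{PB1}), so that $(-)_q$ is an exact endofunctor and the computation $[L_r(\mu)_q]=[L_{r\minus1}(\widetilde\mu)]$ from~\eqref{eqSC} can be combined additively with $[W_r(\lambda):L_r(\mu)]\neq0$ inside the Grothendieck group. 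Once exactness is in hand, the remainder is a purely formal positivity argument in $\cG_A$.
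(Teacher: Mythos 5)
Your proposal is correct and follows essentially the same route as the paper: the paper's proof likewise applies equation~\eqref{eqSC} together with exactness of the generalised $x_r$-eigenspace projection to conclude $[W_r(\lambda)_q:L_{r\minus 1}(\tilde\mu)]\neq 0$, and then invokes Proposition~\ref{Propalpha} to locate $L_{r\minus 1}(\tilde\mu)$ inside one of the cell-module sections indexed by $\RR(\lambda)_q\sqcup\AAAA(\lambda)_q$. The only difference is that you spell out explicitly the exactness justification and the positivity argument in the Grothendieck group, which the paper leaves implicit.
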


\begin{proof}
By equation~\eqref{eqSC}, we have $[W_r(\lambda)_q:L_{r\minus 1}(\tilde\mu)]\not=0$.
The result thus follows from Proposition~\ref{Propalpha}.\end{proof}

\begin{prop}\label{rrm}
Assume that~$[W_r(\lambda):L_r(\mu)]\not=0$, then $\lambda\subseteq \mu$ and~$ \mu/\lambda\in \Upsilon$.
\end{prop}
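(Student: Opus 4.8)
The plan is to prove the statement by induction on $r$ (equivalently on $|\mu|$), using the restriction-to-$A_{r-1}$ machinery of Section~\ref{SecRes} together with the iterative characterisation $\Upsilon$ of the set $\Gamma$ from Definition~\ref{DefUp1}. The inclusion $\lambda\subseteq\mu$ is immediate from Lemma~\ref{2Lem}(i), so the whole content is showing $\mu/\lambda\in\Upsilon$. Since $\Upsilon$ is generated from $\varnothing$ by the operators $\Pq$ and $\Eq$, the natural strategy is to identify a suitable $q$ and a skew diagram $\tilde\kappa=\mu'/\lambda'$ (arising from smaller partitions with a nonzero multiplicity) such that $\mu/\lambda$ is obtained from $\tilde\kappa$ by applying $\Pq$ or $\Eq$; then the inductive hypothesis $\tilde\kappa\in\Upsilon$ forces $\mu/\lambda\in\Upsilon$ by the closure properties in Definition~\ref{DefUp1}.

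\emph{First} I would set up the base/degenerate cases: when $\lambda=\mu$ the skew diagram is $\varnothing\in\Upsilon$, and Proposition~\ref{PropDiff2} already disposes of the case $\lambda\vdash r-2$, where $\mu/\lambda$ is a rim $2$-hook of shape {\scalebox{.6}{\yng(2)}}, i.e.\ precisely $\Eq(\varnothing)$ or $\Pq$ of a $2$-box, all of which lie in $\Upsilon$. \emph{For the inductive step}, I would pick a removable box of $\mu$ of some content $q$, giving $\widetilde\mu\in\RR(\mu)_q$. By Lemma~\ref{CorRes} there exists $\tilde\lambda\in\RR(\lambda)_q\sqcup\AAAA(\lambda)_q$ with $[W_{r-1}(\tilde\lambda):L_{r-1}(\widetilde\mu)]\neq 0$, and by induction $\widetilde\mu/\tilde\lambda\in\Upsilon$. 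The task is then to relate the two skew shapes $\mu/\lambda$ and $\widetilde\mu/\tilde\lambda$: removing the $q$-box from $\mu$ and either removing a $q$-box from $\lambda$ (the $\RR$ case) or adding a $q-1$-box to $\lambda$ (the $\AAAA$ case) changes the skew diagram in exactly the local way that the operators $\Pq$ and $\Eq$ (read in reverse) describe. I would match these cases carefully, using Lemmas~\ref{LemResS1} and~\ref{LemResS2} to control which transitions can actually occur, and Lemma~\ref{LemGam} to exclude the configurations in which two boxes would be added in a single column in a way incompatible with $\Gamma$.

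\emph{The main obstacle} I expect is bookkeeping the content and position of the boxes so that the passage from $\widetilde\mu/\tilde\lambda$ to $\mu/\lambda$ is genuinely an instance of $\Pq$ or $\Eq$ as defined in Definition~\ref{DefProc}, rather than some nearby but forbidden move; in particular I must verify the d-addable/u-removable $q$-box conditions and, crucially, rule out the situation where $\mu/\lambda$ would be obtained by adding two boxes stacked in one column (shape {\scalebox{.6}{\yng(1,1)}}) with nothing above or to the left. This is exactly the configuration handled by Lemma~\ref{LemResS2} (where adding {\scalebox{.7}{\young(\qmineen q)}} after removing a $q$-box is shown impossible) and by Lemma~\ref{LemGam} (which forbids both $\kappa$ and $\nu$ from lying in $\Gamma=\Upsilon$). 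So the heart of the argument is a finite case analysis, indexed by whether $\tilde\lambda\in\RR(\lambda)_q$ or $\tilde\lambda\in\AAAA(\lambda)_q$ and by the relative geometry of the boxes of content $q$, $q-1$, $q+1$ around the removed box, each case being resolved either by direct identification with a generator of $\Upsilon$ or by the vanishing lemmas that exclude it.
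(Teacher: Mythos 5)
Your proposal is correct and follows essentially the same route as the paper's proof: induction on $r$ after reducing to $\mu\vdash r$ by Lemma~\ref{2Lem}(ii), then Lemma~\ref{CorRes} plus the induction hypothesis, with the $\RR(\lambda)_q$-case giving $\mu/\lambda=\Pq(\tilde\mu/\tilde\lambda)$ and the $\AAAA(\lambda)_q$-case giving $\mu/\lambda=\Eq(\tilde\mu/\tilde\lambda)$, so that Definition~\ref{DefUp1} concludes. Your anticipated ``main obstacle'' is in fact vacuous: since the added boxes are addable to the \emph{partitions} $\tilde\mu$ and $\tilde\lambda$ (resp.\ $\lambda$), the d-addability/u-removability conditions of Definition~\ref{DefProc} hold automatically in both cases, so no configuration needs to be excluded and Lemmas~\ref{LemResS1}, \ref{LemResS2} and~\ref{LemGam} are not needed here (the paper uses them only for the non-vanishing half, Theorem~\ref{MainThm}).
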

\begin{proof}
The condition~$\lambda\subseteq\mu$ is Lemma~\ref{2Lem}(i), so we only prove $ \mu/\lambda\in \Upsilon$.
For $r\le 5$, this follows from \cite[Section~9]{PB1} and Example~\ref{Ex6}, so we proceed by induction on~$r$. By Lemma~\ref{2Lem}(ii), we can restrict to the case $\mu\vdash r$. Now assume that, for $\lambda\subset \mu\vdash r$, we have $[W_r(\lambda):L_r(\mu)]\not=0$. Consider an arbitrary pair of partitions $\tilde\lambda,\tilde\mu$ as in Lemma~\ref{CorRes}. Using the induction hypothesis, we find that there is $\kappa\in \Upsilon$, with $\tilde\mu/ \tilde\lambda=\kappa$.
%We will deal with the two possibilities in Lemma~\ref{CorRes} separately to prove that~$\mu/\lambda$ is obtained from~$\tilde\mu/\tilde\lambda$ by one of the procedures in Definition~\ref{DefUp}. 

Firstly assume first that~$\tilde\lambda\in\RR(\lambda)_q$. Then we have $\mu=\tilde\mu\cup\{b_1\}$ and~$\lambda=\tilde\lambda\cup\{b_2\}$, for $q$-boxes $b_1,b_2$. Consequently, $\tilde\lambda\subset\mu$, so $\kappa\cup\{b_1\}=\mu/ \tilde\lambda$ is a skew Young diagram. Moreover, as the box~$b_1$ is addable to $\tilde\mu$, there is nothing in~$\tilde\mu$ to the right or below $b_1$. This implies in particular that there is nothing in~$\tilde\mu/\tilde\lambda=\kappa$ to the right of or below $b_1$, so $b_1$ is d-addable to $\kappa$.
As $\lambda\subset\mu$, also $\mu/\lambda =(\kappa\cup\{b_1\})\backslash\{b_2\}$ is a skew partition. As $b_2$ is addable to $\tilde\lambda$ it follows that nothing in~$\mu/ \tilde\lambda=\kappa\cup\{b_1\}$ is above or left of~$b_2$, so $b_2$ is u-removable from~$\kappa\cup\{b_1\}$. In conclusion, $\mu/\lambda=\Pq(\kappa)$, meaning that~$\mu/\lambda\in \Upsilon$.

Secondly assume that~$\tilde\lambda\in\AAAA(\lambda)_q$. Then we have $\mu=\tilde\mu\cup\{b_1\}$ and~$\tilde\lambda=\lambda\cup\{b_2\}$, for a~$q$-box~$b_1$ and a~$q\minus 1$-box~$b_2$. Hence $\lambda\subset\tilde\mu$, so $\kappa\cup \{b_2\}=\tilde\mu/\lambda$ is a skew diagram. If there would be a box in~$\kappa=\tilde\mu/\tilde\lambda$ above or to the left of~$b_2$, it could not be addable to $\tilde\lambda$, a contradiction. Hence $b_2$ is u-addable to $\kappa$. We also have $\kappa\cup\{b_1,b_2\}=\mu/\lambda$. As $b_1$ is addable to $\tilde\mu$ it is clearly $d$-addable to $\tilde\mu/ \lambda=\kappa\cup\{b_1,b_2\}$. Hence, $\mu/\lambda=\Eq(\kappa)$.

In both cases, Definition~\ref{DefUp1}, shows that~$\mu/\lambda\in\Upsilon$.
\end{proof}

\begin{cor}\label{CorGam}
Consider some partition~$\eta$ and~$q\in\mZ$, with $\lambda^1\in\RR(\eta)_q$ and~$\lambda^2\in\AAAA(\eta)_q$. For every partition~$\mu$, we then have the following chains of conclusions
\begin{enumerate}[(i)]
\item $[W(\lambda^1):L(\mu)]\not=0\;\;\Rightarrow\;\; \mu/\lambda^1\in\Upsilon\; \;\Rightarrow\; \; \mu/\lambda^2\not\in\Upsilon \; \;\Rightarrow\; \;[W(\lambda^2):L(\mu)]=0$;
\item $[W(\lambda^2):L(\mu)]\not=0\;\;\Rightarrow\;\; \mu/\lambda^2\in\Upsilon\; \;\Rightarrow\; \; \mu/\lambda^1\not\in\Upsilon \; \;\Rightarrow\; \;[W(\lambda^1):L(\mu)]=0$.
\end{enumerate}
\end{cor}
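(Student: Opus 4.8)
The plan is to reduce all four middle implications to a single combinatorial fact — that $\mu/\lambda^1$ and $\mu/\lambda^2$ cannot both lie in $\Upsilon$ — and to deduce that fact from Lemma~\ref{LemGam} via Theorem~\ref{ThmEqui}. Indeed, the first arrow in each chain is exactly Proposition~\ref{rrm} (which also yields $\lambda^i\subseteq\mu$), and the last arrow is its contrapositive: if $\mu/\lambda^i\notin\Upsilon$ then $[W(\lambda^i):L(\mu)]=0$, since a nonzero multiplicity would force $\mu/\lambda^i\in\Upsilon$ again by Proposition~\ref{rrm}. The two middle arrows, $\mu/\lambda^1\in\Upsilon\Rightarrow\mu/\lambda^2\notin\Upsilon$ and $\mu/\lambda^2\in\Upsilon\Rightarrow\mu/\lambda^1\notin\Upsilon$, are contrapositives of one another, so it suffices to show that $\mu/\lambda^1$ and $\mu/\lambda^2$ cannot both belong to $\Upsilon=\Gamma$.

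First I would pin down the geometry relating $\lambda^1$, $\lambda^2$ and $\eta$. Writing $b$ for the removable $q$-box with $\eta=\lambda^1\cup\{b\}$ and $c$ for the addable $q\minus 1$-box with $\lambda^2=\eta\cup\{c\}$, I claim $c$ sits directly below $b$, so that $\{b,c\}$ is a vertical domino {$\begin{minipage}{.45cm} \scalebox{.8}{\yng(1,1)} \end{minipage}$} and $\lambda^2=\lambda^1\cup\{b,c\}$. This is a diagonal count: $b=(i,j)$ lies on the content-$q$ diagonal with $\eta_i=j$ and $\eta_{i+1}\le j\minus 1$, while the unique addable box on the content-$(q\minus 1)$ diagonal can occur only in row $i+1$ — any row $\le i$ is too long, since there $\eta_a\ge\eta_i=j\ge a+q$, and any row $\ge i+2$ would require its predecessor longer than $\eta_{i+1}$, which is impossible — and addability in row $i+1$ forces $\eta_{i+1}=j\minus 1$, placing $c=(i+1,j)$. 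Since $\lambda^2$ exists by hypothesis, this configuration must hold.

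With this in hand I would invoke Lemma~\ref{LemGam}. If $\lambda^2\not\subseteq\mu$ the relevant conclusion is immediate, as then $\mu/\lambda^2$ is not a skew diagram (and $[W(\lambda^2):L(\mu)]=0$ already by Lemma~\ref{2Lem}(i)); so assume $\lambda^1\subseteq\lambda^2\subseteq\mu$. Then $\mu/\lambda^1=(\mu/\lambda^2)\cup\{b,c\}$, i.e.\ $\mu/\lambda^1$ is obtained from $\nu:=\mu/\lambda^2$ by adding the vertical domino $\{b,c\}$. Moreover no box of $\nu$ lies above or to the left of $b$ or $c$: every such position lies in column $j$ above $c$, or in rows $i,i+1$ to the left of the domino, and hence already belongs to $\lambda^2$, so it is deleted when passing from $\mu$ to $\mu/\lambda^2$. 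Thus the hypotheses of Lemma~\ref{LemGam} hold with $\nu=\mu/\lambda^2$ and $\kappa=\mu/\lambda^1$, and the lemma gives that at most one of $\mu/\lambda^1,\mu/\lambda^2$ lies in $\Gamma=\Upsilon$, completing both middle implications and hence the corollary.

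The main obstacle I expect is the second step: correctly forcing $b$ and $c$ into the vertical-domino configuration to which Lemma~\ref{LemGam} applies. Everything else is bookkeeping, but this content/diagonal matching is precisely what makes the lemma applicable; a careless analysis might permit $c$ to be detached from $b$, in which case the ``no box above or left'' hypothesis could fail and the argument would break.
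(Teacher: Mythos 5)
Your proof is correct and follows essentially the same route as the paper: the paper likewise notes that $\lambda^2$ is obtained from $\lambda^1$ by adding the vertical rim 2-hook with a $q$-box atop a $q\minus1$-box, invokes Lemma~\ref{LemGam} to rule out both $\mu/\lambda^1$ and $\mu/\lambda^2$ lying in $\Gamma=\Upsilon$, and closes each chain with Proposition~\ref{rrm}. The only difference is that you spell out details the paper leaves implicit (the diagonal count forcing the domino configuration, the verification of the ``no box above or to the left'' hypothesis, and the degenerate case $\lambda^2\not\subseteq\mu$), all of which are correct.
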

\begin{proof}
The diagram of~$\lambda^2$ is obtained from~$\lambda^1$ by adding a rim 2-hook {$\begin{minipage}{.45cm} \scalebox{.8}{\young(\qss,\qmineen)} \end{minipage}$}. Lemma~\ref{LemGam} thus implies that either
$\mu/\lambda^1\not\in\Upsilon$ or $\mu/\lambda^2\not\in\Upsilon$. The conclusion then follows from Proposition~\ref{rrm}.
\end{proof}

Using the above corollary, we can now find a stronger version of Lemma~\ref{CorRes}.

\begin{prop}\label{PropImpr}
Consider $\lambda\in\LL_r$, $\mu\vdash r$ and assume that~$[W_r(\lambda):L_r(\mu)]=k>0$. If for $q\in\mZ$, we have $\widetilde\mu\in\RR(\mu)_q$, then
precisely one of the following is true:
\begin{enumerate}[(i)]
\item there exists~$\tilde\lambda\in\RR(\lambda)_q$ with $[W_{r\minus 1}(\tilde\lambda):L_{r\minus 1}(\tilde\mu)]\ge k$, and for any $\eta\in\AAAA(\lambda)_q$ we have $\tilde\mu/\eta\not\in\Upsilon$,
\item there exists~$\tilde\lambda\in\AAAA(\lambda)_q$ with $[W_{r\minus 1}(\tilde\lambda):L_{r\minus 1}(\tilde\mu)]\ge k$, and for any $\eta\in\RR(\lambda)_q$ we have $\tilde\mu/\eta\not\in \Upsilon$.
\end{enumerate}
\end{prop}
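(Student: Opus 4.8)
The plan is to sharpen the bookkeeping behind Lemma~\ref{CorRes} so that it tracks the integer $k$, and then to feed the incompatibility of Corollary~\ref{CorGam} into it. Throughout I abbreviate by $\tilde\lambda_1$ the unique element of $\RR(\lambda)_q$ and by $\tilde\lambda_2$ the unique element of $\AAAA(\lambda)_q$, with the convention that the associated multiplicity is $0$ when the set is empty; recall that each of these sets has at most one element.

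First I would establish the lower bound $[W_r(\lambda)_q:L_{r\minus 1}(\tilde\mu)]\ge k$. The point is that $\RF_q$ is exact and, since $\RR(\mu)_q=\{\tilde\mu\}$, equation~\eqref{eqSC} gives that $L_r(\mu)_q=L_{r\minus 1}(\tilde\mu)$ is simple. Applying $\RF_q$ to a composition series of $W_r(\lambda)$ therefore produces a filtration of $W_r(\lambda)_q$ in which each of the $k$ sections coming from a copy of $L_r(\mu)$ equals $L_{r\minus 1}(\tilde\mu)$, while every other section contributes a nonnegative number of copies of $L_{r\minus 1}(\tilde\mu)$. On the other hand, Proposition~\ref{Propalpha} evaluates this multiplicity as $A+B$, where $A:=[W_{r\minus 1}(\tilde\lambda_1):L_{r\minus 1}(\tilde\mu)]$ and $B:=[W_{r\minus 1}(\tilde\lambda_2):L_{r\minus 1}(\tilde\mu)]$. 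Hence $A+B\ge k>0$.

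Next I would argue that at most one of $A,B$ is nonzero, so that exactly one of them is $\ge k$ and the other vanishes. When both $\RR(\lambda)_q$ and $\AAAA(\lambda)_q$ are nonempty, Corollary~\ref{CorGam} applies with $\eta=\lambda$ and the partition $\tilde\mu$: if $A$ and $B$ were both nonzero we would obtain $\tilde\mu/\tilde\lambda_1\in\Upsilon$ and $\tilde\mu/\tilde\lambda_2\in\Upsilon$ simultaneously, contradicting the chain of implications in Corollary~\ref{CorGam}(i). (If one set is empty, the claim is immediate.) If it is $A$ that is $\ge k$, then the nonvanishing of $A$ together with Corollary~\ref{CorGam}(i) forces $\tilde\mu/\eta\not\in\Upsilon$ for the element $\eta\in\AAAA(\lambda)_q$, and vacuously when that set is empty; this is exactly alternative (i). The case $B\ge k$ is symmetric and yields (ii).

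It remains to check that the two alternatives are mutually exclusive, giving ``precisely one''. This I would deduce from Proposition~\ref{rrm}: alternative (ii) supplies $\tilde\lambda_2\in\AAAA(\lambda)_q$ with $[W(\tilde\lambda_2):L(\tilde\mu)]\ge k>0$, whence $\tilde\mu/\tilde\lambda_2\in\Upsilon$, in direct contradiction with the requirement $\tilde\mu/\eta\not\in\Upsilon$ (for $\eta=\tilde\lambda_2$) imposed by (i). The only genuinely delicate step is the exactness argument for the bound $\ge k$, which really uses the simplicity of $L_r(\mu)_q$ from equation~\eqref{eqSC}; the rest is organisation over the at-most-two-element index set $\RR(\lambda)_q\sqcup\AAAA(\lambda)_q$.
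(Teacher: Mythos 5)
Your proposal is correct and takes essentially the same route as the paper: the paper's one-line appeal to Proposition~\ref{Propalpha} and equation~\eqref{eqSC} is exactly your exactness argument showing that $k$ is bounded by the sum of the two multiplicities over $\RR(\lambda)_q$ and $\AAAA(\lambda)_q$, and the paper likewise finishes by invoking Corollary~\ref{CorGam} to kill one of the two terms and supply the $\Upsilon$-conditions. Your explicit handling of the empty-set cases and of the mutual exclusivity of (i) and (ii) merely spells out what the paper compresses into ``the other cases are easier to deal with'' and ``the conclusion thus follows from Corollary~\ref{CorGam}''.
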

\begin{proof}
We will assume that both $\lambda^1\in\RR(\lambda)_q$ and~$\lambda^2\in\AAAA(\lambda)_q$ exist, the other cases are easier to deal with.
Proposition~\ref{Propalpha} and equation~\eqref{eqSC} show that
$$[W_r(\lambda):L_r(\mu)]\;\le \; [W_{r\minus 1}(\lambda^1):L_{r\minus 1}(\tilde\mu)]+[W_{r\minus 1}(\lambda^2):L_{r\minus 1}(\tilde\mu)].$$
The conclusion thus follows from Corollary~\ref{CorGam}.
\end{proof}

\subsection{The cell multiplicities}
\begin{thm}\label{MainThm}
For any $\lambda\in \LL_r$ and~$\mu\in\Lambda_r$, we have
$$[W_r(\lambda):L_r(\mu)]\;=\;\begin{cases}1&\mbox{if}\;\, \lambda\subseteq\mu\;\mbox{ and }\; \mu/\lambda \in\Upsilon,\\0&\mbox{otherwise.}\end{cases}$$
\end{thm}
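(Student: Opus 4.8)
The theorem asserts that $[W_r(\lambda):L_r(\mu)]$ is either $0$ or $1$, and equals $1$ exactly when $\lambda\subseteq\mu$ and $\mu/\lambda\in\Upsilon$. Proposition~\ref{rrm} already supplies one half: if the multiplicity is nonzero then $\lambda\subseteq\mu$ and $\mu/\lambda\in\Upsilon$. So the remaining task is twofold: (a) show that whenever $\lambda\subseteq\mu$ and $\mu/\lambda\in\Upsilon$ the multiplicity is \emph{at least} $1$, and (b) show that the multiplicity never exceeds $1$. I plan to prove both simultaneously by induction on $r$, reducing to the case $\mu\vdash r$ via Lemma~\ref{2Lem}(ii), with the base cases $r\le 5$ handled by \cite[Section~9]{PB1} together with Example~\ref{Ex6}.

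\textbf{The inductive step.}
Fix $\lambda\subset\mu\vdash r$ with $\kappa:=\mu/\lambda\in\Upsilon$. Using the equivalent description $\Upsilon=\overline{\Upsilon}$ from Theorem~\ref{ThmEqui}, I would write $\kappa=\oPq(\tilde\kappa)$ or $\kappa=\oEq(\tilde\kappa)$ for some $\tilde\kappa\in\overline{\Upsilon}$ and some $q\in\mZ$ (this is exactly what the defining properties of $\overline{\Upsilon}$ in Definition~\ref{DefUp2} provide, read backwards). The plan is then to choose $\widetilde\mu\in\RR(\mu)_q$ appropriately and to transport the combinatorial relation $\kappa=\oPq(\tilde\kappa)$ (resp. $\oEq$) into a statement about the underlying partitions: namely that there is a $\tilde\lambda\in\RR(\lambda)_q\sqcup\AAAA(\lambda)_q$ with $\widetilde\mu/\tilde\lambda=\tilde\kappa$, mirroring the partition-level bookkeeping already carried out in the proof of Proposition~\ref{rrm}. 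The induction hypothesis then gives $[W_{r\minus1}(\tilde\lambda):L_{r\minus1}(\widetilde\mu)]=1$, and Proposition~\ref{PropImpr} — which says precisely one of the two contributions survives — upgrades this to the two-sided bound $[W_r(\lambda):L_r(\mu)]=1$. The role of the \emph{barred} operators $\oPq,\oEq$ is crucial here: the extra condition that no d-addable $q+1$-box (resp. $q\minus1$-box) exists is exactly the hypothesis needed to invoke Lemma~\ref{LemResS1} or Lemma~\ref{LemResS2}, which pin the multiplicity down to $1$ rather than merely bounding it below.

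\textbf{The main obstacle.}
The delicate point is the upper bound, i.e. ruling out multiplicities $\ge 2$. Proposition~\ref{PropImpr} already does the essential disambiguation: of the two potential contributors $\lambda^1\in\RR(\lambda)_q$ and $\lambda^2\in\AAAA(\lambda)_q$ to the short exact sequence of Proposition~\ref{Propalpha}, Corollary~\ref{CorGam} forces at most one of $\widetilde\mu/\lambda^1$, $\widetilde\mu/\lambda^2$ to lie in $\Upsilon$, so only one term contributes and we inherit the bound $k\le[W_{r\minus1}(\tilde\lambda):L_{r\minus1}(\widetilde\mu)]=1$ from the induction hypothesis. The subtlety I expect to wrestle with is \emph{consistency of the choice of $q$}: I must verify that the $q$ realising $\kappa$ as $\oPq(\tilde\kappa)$ or $\oEq(\tilde\kappa)$ can be matched with a genuine removable $q$-box of $\mu$ (giving a valid $\widetilde\mu\in\RR(\mu)_q$), and that the d-addability constraints built into the barred operators translate faithfully into the no-addable-box hypotheses of Lemmas~\ref{LemResS1} and~\ref{LemResS2}. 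Handling the pure existence direction (a) when only one of $\RR(\lambda)_q$, $\AAAA(\lambda)_q$ is nonempty is the ``easier case'' alluded to in Proposition~\ref{PropImpr}, and should follow directly from Corollary~\ref{Coreta}. Once the dictionary between the operators $\oPq,\oEq$ and the restriction data $(\widetilde\mu,\tilde\lambda)$ is set up cleanly, the induction closes.
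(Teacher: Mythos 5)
Your proposal is correct and takes essentially the same approach as the paper: the paper also reduces to $\mu\vdash r$ by Lemma~\ref{2Lem}(ii), uses $r\le 5$ as base case, writes $\mu/\lambda=\oPq(\kappa)$ or $\oEq(\kappa)$ for $\kappa\in\Upsilon$ via Theorem~\ref{ThmEqui}, and closes the induction through two lemmas (Lemmas~\ref{Lemprim1} and~\ref{Lemprim2}) whose content is exactly the two-sided bound you outline --- Lemma~\ref{LemResS1} resp.\ Lemma~\ref{LemResS2} combined with Proposition~\ref{Propalpha} and Corollary~\ref{CorGam} for one inequality, and Proposition~\ref{PropImpr} for the other. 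The ``dictionary'' between $\oPq,\oEq$ and the restriction data $(\tilde\mu,\tilde\lambda)$ that you defer is precisely what those two lemmas formalise, including the translation of the d-addability constraints into the no-addable-box hypotheses.
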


We start the proof with the following two lemmata.

\begin{lemma}\label{Lemprim1}
Consider $q\in\mZ$, $\tilde\lambda\subset\tilde\mu\vdash r\minus 1$ with $\tilde\mu/\tilde\lambda\in\Upsilon$, and~$\lambda\in\AAAA(\tilde\lambda)_{q+1}$, $\mu\in\AAAA(\tilde\mu)_{q+1}$, such that~$\mu/\lambda=\oPq(\tilde\mu/\tilde\lambda)$. Then we have
$$[W_{r\minus 1}(\tilde\lambda):L_{r\minus 1}(\tilde\mu)]\quad =\quad [W_{r}(\lambda):L_{r}(\mu)].$$
\end{lemma}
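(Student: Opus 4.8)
The statement to prove is Lemma~\ref{Lemprim1}, asserting the equality $[W_{r\minus 1}(\tilde\lambda):L_{r\minus 1}(\tilde\mu)] = [W_{r}(\lambda):L_{r}(\mu)]$ under the hypothesis that $\mu/\lambda = \oPq(\tilde\mu/\tilde\lambda)$, where $\lambda$ and $\mu$ are obtained from $\tilde\lambda$ and $\tilde\mu$ by adding $q\plu 1$-boxes. My plan is to prove the equality as two inequalities, using restriction to $A_{r\minus 1}$ and the eigenspace decomposition with respect to the Jucys--Murphy element as the central tool, exactly in the spirit of Proposition~\ref{PropImpr}.

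\textbf{The two directions.} For the inequality $[W_{r}(\lambda):L_{r}(\mu)] \le [W_{r\minus 1}(\tilde\lambda):L_{r\minus 1}(\tilde\mu)]$, I would apply Proposition~\ref{Propalpha} to decompose $\res_r W_r(\lambda)$ into generalised $x_r$-eigenspaces. Since $\lambda \in \AAAA(\tilde\lambda)_{q+1}$, we have $\tilde\lambda \in \RR(\lambda)_{q+1}$, and since $\mu \in \AAAA(\tilde\mu)_{q+1}$, we have $\tilde\mu \in \RR(\mu)_{q+1}$. Applying equation~\eqref{eqSC}, the multiplicity $[W_r(\lambda):L_r(\mu)]$ is bounded above by the total multiplicity of $L_{r\minus 1}(\tilde\mu)$ in the relevant eigenspace $W_r(\lambda)_{q+1}$. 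The point is then to show that among the two summands of the short exact sequence in Proposition~\ref{Propalpha} for $W_r(\lambda)_{q+1}$ — one indexed by $\RR(\lambda)_{q+1}$ (which contains $\tilde\lambda$) and one by $\AAAA(\lambda)_{q+1}$ — only the contribution from $\tilde\lambda$ can carry $L_{r\minus 1}(\tilde\mu)$. This is precisely where the hypothesis $\mu/\lambda = \oPq(\tilde\mu/\tilde\lambda)$ enters: the $\oPq$-condition forbids a d-addable $q\plu 1$-box, and I expect this to be the combinatorial obstruction ruling out any competing $\eta \in \AAAA(\lambda)_{q+1}$ via Corollary~\ref{CorGam} (i.e. $\tilde\mu/\eta \not\in \Upsilon$). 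For the reverse inequality, I would run the symmetric argument using Proposition~\ref{PropImpr} applied to the pair $(\lambda,\mu)$ at level $r$ together with the removal of the $q\plu 1$-box, showing that the multiplicity at level $r\minus 1$ must be realised at $\tilde\lambda$ rather than at an addable partition.

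\textbf{The main obstacle.} The hard part will be showing that the $\oPq$-hypothesis exactly isolates a single surviving summand in the restriction, ruling out cross-contributions. Concretely, I must verify that whenever a partition $\eta$ lies in $\AAAA(\lambda)_{q+1}$ (the ``wrong'' summand in the short exact sequence), the skew diagram $\tilde\mu/\eta$ fails to lie in $\Upsilon$, so that by Proposition~\ref{rrm} it contributes no copy of $L_{r\minus 1}(\tilde\mu)$. The condition that $\oPq(\tilde\mu/\tilde\lambda)$ is defined — meaning no d-addable $q\plu 1$-box is available after pushing down the $q$-boxes — should translate, via Lemma~\ref{LemGam} and Corollary~\ref{CorGam}, into precisely the statement that adding a further $q\plu 1$-box destroys membership in $\Upsilon$. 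Managing the bookkeeping of contents (distinguishing the $q\plu 1$-boxes added to pass from $\tilde\lambda$ to $\lambda$ and from $\tilde\mu$ to $\mu$ from the internal $q$-boxes governing the $\Pq$-operation) is the delicate point, and I would keep careful track of which box has which content at each step. Once the cross-terms are eliminated, the equality of multiplicities follows by squeezing between the two inequalities, and membership statements such as $\mu/\lambda \in \Upsilon$ are automatic from Definition~\ref{DefUp2} since $\oPq$ preserves $\overline{\Upsilon} = \Upsilon$.
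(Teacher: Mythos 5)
Your plan has a genuine structural gap: both of your proposed arguments prove the \emph{same} inequality, and the other half of the asserted equality is never addressed. Your first argument (decompose $\res_r W_r(\lambda)$ via Proposition~\ref{Propalpha}, bound using equation~\eqref{eqSC}, kill the competing summand by Corollary~\ref{CorGam}) is exactly the content and proof of Proposition~\ref{PropImpr}, and it yields $[W_r(\lambda):L_r(\mu)]\le [W_{r-1}(\tilde\lambda):L_{r-1}(\tilde\mu)]$. Your proposed ``symmetric argument'' for the reverse inequality again applies Proposition~\ref{PropImpr} to the pair $(\lambda,\mu)$ at level $r$; but that proposition, by its very shape, only ever bounds a level-$r$ multiplicity from \emph{above} by a level-$(r-1)$ multiplicity, so showing the bound is ``realised at $\tilde\lambda$'' gives $[W_{r-1}(\tilde\lambda):L_{r-1}(\tilde\mu)]\ge[W_r(\lambda):L_r(\mu)]$ --- the inequality you already have. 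Moreover, no argument that restricts $W_r(\lambda)$ downwards can produce the missing bound $[W_{r-1}(\tilde\lambda):L_{r-1}(\tilde\mu)]\le[W_r(\lambda):L_r(\mu)]$: composition factors $L_r(\sigma)$ of $W_r(\lambda)$ with $|\sigma|<r$ are not cell modules, their restrictions are not controlled by \eqref{eqSC}, and they can contribute extra copies of $L_{r-1}(\tilde\mu)$ (see the Remark after Lemma~\ref{LemResS1}). The paper instead goes \emph{up}: by stability (Lemma~\ref{2Lem}(ii)), $[W_{r-1}(\tilde\lambda):L_{r-1}(\tilde\mu)]=[W_{r+1}(\tilde\lambda):L_{r+1}(\tilde\mu)]$; then Lemma~\ref{LemResS1}, on restriction of the non-cell simple module $L_{r+1}(\tilde\mu)$, gives $[L_{r+1}(\tilde\mu)_{q+1}:L_r(\mu)]=1$, hence $[W_{r+1}(\tilde\lambda):L_{r+1}(\tilde\mu)]\le[W_{r+1}(\tilde\lambda)_{q+1}:L_r(\mu)]$; finally Proposition~\ref{Propalpha} and Corollary~\ref{CorGam}(ii) (using $\mu/\lambda\in\Upsilon$) identify the right-hand side with $[W_r(\lambda):L_r(\mu)]$. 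Neither Lemma~\ref{2Lem}(ii) nor Lemma~\ref{LemResS1} appears in your plan, and without them this direction is out of reach.

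Relatedly, you misplace where the hypothesis $\mu/\lambda=\oPq(\tilde\mu/\tilde\lambda)$ does its work. Killing the competing summand $\eta\in\AAAA(\lambda)_q$ in the downward restriction needs only $\tilde\mu/\tilde\lambda\in\Upsilon$ together with Corollary~\ref{CorGam} (via Lemma~\ref{LemGam}); it would work just as well if $\mu/\lambda$ were merely $\Pq(\tilde\mu/\tilde\lambda)$, and indeed Proposition~\ref{PropImpr} makes no such assumption. The bar in $\oPq$ --- no d-addable $q+1$-box --- is needed precisely for the direction your plan is missing: it guarantees that the partition $\mu$ has no addable $q+1$-box, which is the hypothesis of Lemma~\ref{LemResS1}. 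There is also a bookkeeping slip: with the paper's conventions, $\lambda\in\AAAA(\tilde\lambda)_{q+1}$ means $\tilde\lambda\in\RR(\lambda)_q$ (a $q$-box is added), so the eigenspace relevant to your first argument is $W_r(\lambda)_q$, not $W_r(\lambda)_{q+1}$; this is minor compared with the structural gap, but it feeds your misreading of the role of the $\oPq$-condition.
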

\begin{proof}
By assumption in Definition~\ref{DefProc}(ii), $\mu$ does not allow an addable $q+1$-box. As $\tilde\mu\in\RR(\mu)_q$, Lemma~\ref{2Lem}(ii) and Lemma~\ref{LemResS1} thus imply that
$$[W_{r\minus 1}(\tilde\lambda):L_{r\minus 1}(\tilde\mu)]=[W_{r+ 1}(\tilde\lambda):L_{r+ 1}(\tilde\mu)]\le [W_{r+ 1}(\tilde\lambda)_{q+1}:L_{r}(\mu)].$$
Proposition~\ref{Propalpha} implies
$$[W_{r+ 1}(\tilde\lambda)_{q+1}:L_{r}(\mu)]=[W_r(\lambda):L_{r}(\mu)]\,+\,\sum_{\nu\in\RR(\tilde\lambda)_{q+1}}[W_r(\nu):L_{r}(\mu)].$$
Since we assume $\mu/\lambda\in\Upsilon$, part of the chain in Corollary~\ref{CorGam}(ii) implies that the right-hand term vanishes.
The two displayed equalities above thus finally yield
$$[W_{r\minus 1}(\tilde\lambda):L_{r\minus 1}(\tilde\mu)]\le[W_r(\lambda):L_{r}(\mu)].$$

In order to prove the weak inequality in the other direction we can of course restrict to the assumption~$[W_r(\lambda):L_{r}(\mu)]\not=0$.
Proposition~\ref{PropImpr} then implies that 
$$[W_r(\lambda):L_r(\mu)]\;\le\; [W_{r\minus 1}(\tilde\lambda):L_{r\minus 1}(\tilde\mu)],$$
which concludes the proof.
\end{proof}

\begin{lemma}\label{Lemprim2}
Consider $q\in\mZ$, $\tilde\lambda\subset\tilde\mu\vdash r\minus 1$ with $\tilde\mu/\tilde\lambda\in\Upsilon$, and~$\lambda\in\RR(\tilde\lambda)_{q-1}$, $\mu\in\AAAA(\tilde\mu)_{q+1}$, such that~$\mu/\lambda=\oEq(\tilde\mu/\tilde\lambda)$. Then we have
$$[W_{r\minus 1}(\tilde\lambda):L_{r\minus 1}(\tilde\mu)]\quad= \quad [W_{r}(\lambda):L_{r}(\mu)].$$
\end{lemma}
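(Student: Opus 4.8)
The plan is to mirror the structure of the proof of Lemma~\ref{Lemprim1}, establishing the claimed equality by proving two opposite inequalities, now using the operator $\oEq$ (via $\Eq$) in place of $\oPq$. The setup is symmetric: we have $\tilde\mu/\tilde\lambda\in\Upsilon$, and the passage to $\mu/\lambda=\oEq(\tilde\mu/\tilde\lambda)$ corresponds to passing from $A_{r\minus 1}$ to $A_r$ while removing a $q\minus 1$-box from $\tilde\lambda$ and adding a $q\plu 1$-box to $\tilde\mu$. The key representation-theoretic input for one direction should be Lemma~\ref{LemResS2} (the companion to Lemma~\ref{LemResS1} used in the proof of Lemma~\ref{Lemprim1}), since the hypothesis $\lambda\in\RR(\tilde\lambda)_{q\minus 1}$ means precisely that $\tilde\lambda$ has a removable $q\minus 1$-box, which is exactly the combinatorial condition under which Lemma~\ref{LemResS2} applies.

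For the inequality $[W_{r\minus 1}(\tilde\lambda):L_{r\minus 1}(\tilde\mu)]\le [W_r(\lambda):L_r(\mu)]$, I would first invoke Lemma~\ref{2Lem}(ii) to shift the left-hand multiplicity up to level $r\plu 1$, rewriting it as $[W_{r\plu 1}(\tilde\lambda):L_{r\plu 1}(\tilde\mu)]$. Since $\tilde\mu\in\RR(\mu)_q$ and $\tilde\lambda$ has a removable $q\minus 1$-box, Lemma~\ref{LemResS2} gives $[W_{r\plu 1}(\tilde\lambda):L_{r\plu 1}(\tilde\mu)]\le [W_{r\plu 1}(\tilde\lambda)_{q\minus 1}:L_r(\mu)]$. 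I would then apply Proposition~\ref{Propalpha} to expand $[W_{r\plu 1}(\tilde\lambda)_{q\minus 1}:L_r(\mu)]$ as a sum of a distinguished term $[W_r(\lambda):L_r(\mu)]$ (coming from $\lambda\in\RR(\tilde\lambda)_{q\minus 1}$) together with correction terms indexed by the other partitions in $\RR(\tilde\lambda)_{q\minus 1}$ and $\AAAA(\tilde\lambda)_{q\minus 1}$; the goal is to show these vanish. Here the hypothesis $\mu/\lambda\in\Upsilon$ together with the chain of implications in Corollary~\ref{CorGam} should force the extra terms to vanish, exactly as in Lemma~\ref{Lemprim1}, because the relevant competing partitions differ from $\lambda$ by a rim $2$-hook of the type controlled by Lemma~\ref{LemGam}.

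For the reverse inequality, assuming $[W_r(\lambda):L_r(\mu)]\ne 0$, I would apply Proposition~\ref{PropImpr} with the roles arranged so that $\tilde\lambda\in\AAAA(\lambda)_q$ (equivalently $\lambda\in\RR(\tilde\lambda)_{q\minus 1}$). Proposition~\ref{PropImpr} then yields $[W_r(\lambda):L_r(\mu)]\le[W_{r\minus 1}(\tilde\lambda):L_{r\minus 1}(\tilde\mu)]$, provided the alternative branch is excluded; the exclusion of that branch is again governed by $\mu/\lambda\in\Upsilon$ via Corollary~\ref{CorGam}, so that the two inequalities combine to the desired equality.

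The main obstacle I anticipate is bookkeeping the content-shift conventions precisely when invoking Lemma~\ref{LemResS2}, since its statement is phrased with $\nu\in\RR(\mu)_q$ and a removable $q\minus 1$-box, whereas here the natural variables are $\lambda\in\RR(\tilde\lambda)_{q\minus 1}$ with $\mu\in\AAAA(\tilde\mu)_{q\plu 1}$; I must verify that the defining condition of $\oEq$ in Definition~\ref{DefProc}(iv) (that $\Eq(\tilde\mu/\tilde\lambda)$ allows no d-addable $q\minus 1$-box) translates into exactly the absence-of-addable-box hypothesis needed to rule out the unwanted subquotients in the radical, just as the condition in Definition~\ref{DefProc}(ii) did for $\oPq$ in Lemma~\ref{Lemprim1}. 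Checking that the operator $\oEq$ supplies precisely the vanishing needed to collapse Proposition~\ref{Propalpha}'s expansion to a single term is where the argument is most delicate.
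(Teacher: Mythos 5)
Your overall architecture is the same as the paper's: lift the left-hand multiplicity to level $r+1$ via Lemma~\ref{2Lem}(ii), bound it by $[W_{r+1}(\tilde\lambda)_{q\minus 1}:L_r(\mu)]$ via Lemma~\ref{LemResS2}, expand with Proposition~\ref{Propalpha}, kill the extra terms over $\AAAA(\tilde\lambda)_{q\minus 1}$ with Corollary~\ref{CorGam}(i), and get the reverse inequality from Proposition~\ref{PropImpr}. However, there is a genuine gap in how you invoke Lemma~\ref{LemResS2}. That lemma, applied here with $\nu=\tilde\mu$, requires that \emph{$\tilde\mu$} has a removable $q\minus 1$-box: its hypothesis concerns the partition labelling the simple module $L_{r+1}(\tilde\mu)$ being restricted. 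You instead verify that \emph{$\tilde\lambda$} has a removable $q\minus 1$-box, which is indeed immediate from $\lambda\in\RR(\tilde\lambda)_{q\minus 1}$ but is irrelevant: the boxes of $\tilde\lambda$ play no role in the restriction of the simple module $L_{r+1}(\tilde\mu)$. The fact that $\tilde\mu$ has a removable $q\minus 1$-box is \emph{not} automatic, and deriving it is exactly where Definition~\ref{DefProc}(iv) enters the paper's proof: since $\mu/\lambda=\oEq(\tilde\mu/\tilde\lambda)$ allows no d-addable $q\minus 1$-box, and $\tilde\mu/\lambda$ is obtained from $\mu/\lambda$ by removing a d-removable $q$-box, the skew diagram $\tilde\mu/\lambda$ must have a d-removable $q\minus 1$-box, whence $\tilde\mu$ has a removable $q\minus 1$-box. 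Without this combinatorial step your first inequality is unjustified.

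Your closing paragraph shows you sensed that Definition~\ref{DefProc}(iv) must be used somewhere, but you mislocate its role: you expect it to translate into an ``absence-of-addable-box'' hypothesis that rules out unwanted radical subquotients, by analogy with Lemma~\ref{LemResS1} in the proof of Lemma~\ref{Lemprim1}. In fact the hypothesis of Lemma~\ref{LemResS2} is a \emph{presence}-of-removable-box condition on $\tilde\mu$; the exclusion of radical contributions is internal to the already-proved Lemma~\ref{LemResS2}, and what its user must supply is precisely the removable-box condition above. The rest of your argument is sound and matches the paper: $\RR(\tilde\lambda)_{q\minus 1}=\{\lambda\}$ gives the distinguished term, Corollary~\ref{CorGam}(i) kills the $\AAAA(\tilde\lambda)_{q\minus 1}$ terms, and in Proposition~\ref{PropImpr} the unwanted branch is excluded --- though note this exclusion comes directly from the hypothesis $\tilde\mu/\tilde\lambda\in\Upsilon$ with $\tilde\lambda\in\AAAA(\lambda)_q$, rather than from $\mu/\lambda\in\Upsilon$ via Corollary~\ref{CorGam} as you state.
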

\begin{proof}
By Definition~\ref{DefProc}(iv), the skew diagram $\mu/\lambda$ allows no d-addable $q\,\minus\,1$-box. Now $\tilde\mu/\lambda$ is obtained from~$\mu/\lambda$ by removing a d-removable $q$-box. The fact that~$\mu/\lambda$ allows no d-addable $q\,\minus\,1$-box implies that~$\tilde\mu/\lambda$ has a d-removable $q\,\minus\,1$-box. Consequently, the partition~$\tilde\mu$ contains a removable $q\,\minus\,1$-box. Lemma~\ref{2Lem}(ii) and Lemma~\ref{LemResS2} thus imply that
$$[W_{r\minus 1}(\tilde\lambda):L_{r\minus 1}(\tilde\mu)]=[W_{r+ 1}(\tilde\lambda):L_{r+ 1}(\tilde\mu)]\le [W_{r+ 1}(\tilde\lambda)_{q\minus 1}:L_{r}(\mu)].$$
Proposition~\ref{Propalpha} implies that
$$[W_{r+ 1}(\tilde\lambda)_{q\minus 1}:L_{r}(\mu)]\;=\; [W_{r}(\lambda):L_{r}(\mu)]\,+\, \sum_{\nu\in\AAAA(\tilde\lambda)_{q-1}}[W_{r}(\nu):L_{r}(\mu)]$$
As we assume $\mu/\lambda\in\Upsilon$, the right-hand term vanishes by part of the chain in Corollary~\ref{CorGam}(i). the above two displayed equations thus imply that 
$$[W_{r\minus 1}(\tilde\lambda):L_{r\minus 1}(\tilde\mu)]\;\le\;[W_r(\lambda):L_{r}(\mu)].$$
 The inequality
$$[W_r(\lambda):L_r(\mu)]\;\le\; [W_{r\minus 1}(\tilde\lambda):L_{r\minus 1}(\tilde\mu)]$$
follows from Proposition~\ref{PropImpr}, which concludes the proof.
\end{proof}

\begin{proof}[Proof of Theorem~\ref{MainThm}]
The vanishing when $\mu/\lambda\not\in\Upsilon$ is guaranteed by Proposition~\ref{rrm}, the statement $[W_r(\lambda):L_r(\mu)]=1$ for $\mu/\lambda\in\Upsilon$ can be reduced to the case $\mu\vdash r$, by Lemma~\ref{2Lem}(ii). For $r\le 5$ it follows from \cite[Section~9]{PB1} and Example~\ref{Ex6}, so we proceed by induction on $r$.

Assume $\mu\vdash r$ and $\mu/\lambda\in\Upsilon$. By Definition~\ref{DefUp2}, we must have $\mu/\lambda=\oPq(\kappa)$ or $\mu/\lambda=\oEq(\kappa)$ for some $\kappa\in\Upsilon$. There always exist partitions $\tilde\mu$ and $\tilde\lambda$ as Lemma~\ref{Lemprim1} or Lemma~\ref{Lemprim2}, with $\tilde\mu/\tilde\lambda=\kappa$.
Applying those lemmata hence yields the induction.
\end{proof}

%%%%%%%%%%%%%%%%%%%%%%%%%%%%%%%%%%%%%%%%%%%%%%%%%%%%%%%%%%%%%%%%%%%%%%%%%%%%%%%%%%%%%%%%
\appendix

\section{Description of~$\Gamma$ in terms of arrow diagrams}
\subsection{Arrow diagrams}\label{SecArc}
\subsubsection{Weight diagrams}
Following \cite[Section~5]{BrMult}, to each partition~$\lambda$ we associate an infinite (strictly decreasing) sequence of integers $x_\lambda$ defined as
$$x_\lambda=(\lambda_1,\lambda_2\,\minus\,1,\lambda_3\,\minus\,2,\lambda_4\,\minus\,3,\ldots).$$
In analogy with \cite[Section~5.1]{gang}, the weight diagram of~$\lambda$ is then given by associating to each integer $i$ on the real line a white dot if $i\not\in x_\lambda$ and a black dot if $i\in x_\lambda$.

\begin{ex}\label{ExDiag}${}$
\begin{enumerate}[(i)]
\item For $\lambda=(1)$, the weight diagram $x_\lambda$ is given by 
$$ \xymatrix{  & \cdots&\underset{-4}{\bullet}  &\underset{-3}{\bullet} &\underset{-2}{\bullet}  &\underset{-1}{\bullet}  &\underset{0}{\circ}  &\underset{1}{\bullet}  &\underset{2}{\circ} &\underset{3}{\circ}  &\underset{4}{\circ} &\cdots} $$
\item For $\lambda=(3)$, the weight diagram $x_\lambda$ is given by 
$$ \xymatrix{  & \cdots &\underset{-4}{\bullet}  &\underset{-3}{\bullet} &\underset{-2}{\bullet}  &\underset{-1}{\bullet}  &\underset{0}{\circ}  &\underset{1}{\circ}  &\underset{2}{\circ} &\underset{3}{\bullet}  &\underset{4}{\circ} &\cdots} $$
\item For $\lambda=(2,1)$, the weight diagram $x_\lambda$ is given by 
$$ \xymatrix{  & \cdots &\underset{-4}{\bullet}  &\underset{-3}{\bullet} &\underset{-2}{\bullet}  &\underset{-1}{\circ}  &\underset{0}{\bullet}  &\underset{1}{\circ}  &\underset{2}{\bullet} &\underset{3}{\circ}  &\underset{4}{\circ}&\cdots } $$
\item For $\lambda=(3,2)$, the weight diagram $x_\lambda$ is given by 
$$ \xymatrix{  & \cdots &\underset{-4}{\bullet}  &\underset{-3}{\bullet} &\underset{-2}{\bullet}  &\underset{-1}{\circ}  &\underset{0}{\circ}  &\underset{1}{\bullet}  &\underset{2}{\circ} &\underset{3}{\bullet}  &\underset{4}{\circ} &\cdots} $$
\end{enumerate}
\end{ex}

\subsubsection{}\label{xtrans}It is clear that~$x_{\lambda'}$ is obtained from~$x_\lambda$ by reflecting the diagram with respect to a vertical line in the middle of~$0$ and~$1$, followed by changing the colours of all dots.

Any assignment of black and white boxes to $\mZ$ can be interpreted as the weight diagram of a (uniquely determined) partition if and only if there is a position~$i\in\mZ$ such that all dots to its left are black, and a position~$j\in\mZ$ such that all dots to its right are white.

\subsubsection{wb pairs and arrow pairs}
A {\em wb pair of dots} in $x_\lambda$ is a white dot at position~$i\in\mZ$ and a block dot at position~$j\in\mZ$, with $i<j$.
Such a pair is an {\em arrow pair of dots} in $x_\lambda$ if we further have that
\begin{itemize}
\item (hw-condition) the collection of dots in the interval $[i,j]$ contains precisely one more white dot than black dots;
\item (d-condition) one cannot draw a line to the left of~$j$ such that stictly between the line and position~$i$ one has fewer white than black dots.
\end{itemize}

\begin{ex} Consider the weight diagrams in Example~\ref{ExDiag}.
In weight diagrams (i) and (iii), there are no arrow pairs. In weight diagram (ii) there is exactly one arrow pair, given by the dots in position~$1$ and~$3$. In weight diagram (iv) there are two arrow pairs, one corresponds to positions $\minus 1$ and~$3$, the other to $\minus 1$ and~$1$.
\end{ex}

\subsubsection{Arrow diagrams} 
Similarly to~\cite[Section~6.2]{gang}, the arrow diagram for a partition~$\lambda$ consists of the weight diagram, decorated with an arrow from each white dot to each black dot which together form an arrow pair. It follows from the definition of arrow pairs that two arrows in an arrow diagram do not intersect, except possibly in the source (the starting white dot). This can be proved explicitly as in \cite[Lemma~6.2.2]{gang}.

We call pairs of arrows $a$ and $b$ such that source and target of~$a$ lie strictly between the source and target of~$b$ a \emph{nested} pair of arrows. A pair of arrows $a$ and $b$ which are not nested and have different sources is called a \emph{disjoint} pair of arrows. By the above, a pair $a$ and $b$ of disjoint arrows is automatically a pair such that the source and target of~$a$ are either both left of, or both right of, the source and target of~$b$.

\begin{ex}\label{ExDiag2}${}$
The non-trivial arrow diagrams corresponding to the weight diagrams in Example~\ref{ExDiag} are given by
\begin{enumerate}\item[(ii)] 
$ \!\!\!\!\!\!\!\!\!\!\!\!\!\!\!\! \xymatrix{  & \cdots &\underset{-4}{\bullet}  &\underset{-3}{\bullet} &\underset{-2}{\bullet}  &\underset{-1}{\bullet}  &\underset{0}{\circ}  &\underset{1}{\circ}\ar@{-->}@/_0.9pc/[rdru]  &\underset{2}{\circ} &\underset{3}{\bullet}  &\underset{4}{\circ} &\cdots} $
\vspace{2mm}
\item[(iv)]
$ \!\!\!\!\!\!\!\!\!\!\!\!\!\!\!\! \xymatrix{  & \cdots &\underset{-4}{\bullet}  &\underset{-3}{\bullet} &\underset{-2}{\bullet}  &\underset{-1}{\circ}\ar@{-->}@/_0.9pc/[rdru]\ar@{-->}@/_1.4pc/[rrdrru]  &\underset{0}{\circ}  &\underset{1}{\bullet}  &\underset{2}{\circ} &\underset{3}{\bullet}  &\underset{4}{\circ} &\cdots} $
\end{enumerate}
\end{ex}

\subsubsection{} For a weight diagram $x_\lambda$, the operation of constructing a new weight diagram by exchanging two dots which constitute a wb pair will be referred to as {\em flipping a wb pair}.

For a partition~$\lambda$ we define a set of partitions $\Pi(\lambda)$. We have $\nu\in \Pi(\lambda)$ if and only if $x_\nu$ can be obtained from~$x_\lambda$ by moving a number of white dots along arrows in the arrow diagram of~$\lambda$ (while the black dot travels in opposite direction). Equivalently, we can say that~$x_\nu$ can be obtained flipping a number of arrow pairs in $x_\lambda$. Note that the arrows along which boxes will be moved are by assumption automatically such that any two arrows are either nested or disjoint.

\begin{ex}
For $\lambda=(3,2)$, the two weight diagrams of the partitions in~$\Pi(\lambda)$ are given by
$$ \xymatrix{  & \cdots &\underset{-4}{\bullet}  &\underset{-3}{\bullet} &\underset{-2}{\bullet}  &\underset{-1}{\bullet}  &\underset{0}{\circ}  &\underset{1}{\circ}  &\underset{2}{\circ} &\underset{3}{\bullet}  &\underset{4}{\circ} &\cdots} $$
and
$$ \xymatrix{  & \cdots &\underset{-4}{\bullet}  &\underset{-3}{\bullet} &\underset{-2}{\bullet}  &\underset{-1}{\bullet}  &\underset{0}{\circ}  &\underset{1}{\bullet}  &\underset{2}{\circ} &\underset{3}{\circ}  &\underset{4}{\circ} &\cdots} $$
Hence, $\Pi(\begin{minipage}{0.97cm} \scalebox{.7}{\yng(3,2)} \end{minipage})=\{\begin{minipage}{1cm} \scalebox{.7}{\yng(3)} \end{minipage},\begin{minipage}{.45cm} \scalebox{.7}{\yng(1)} \end{minipage}\}$.
\end{ex}

\subsection{Decomposition multiplicities in terms of arrow diagrams}
Now we are ready to derive a fourth description of the set $\Gamma$. The following proposition implies that decomposition multiplicities for periplectic Brauer algebras can be described in terms of the arrow diagram calculus.
\begin{prop}\label{PropGang}
For two partitions $\lambda,\mu$, we have $\lambda\in \Pi(\mu)$ if and only if $\lambda\subseteq\mu$ with $\mu/\lambda\in\Gamma$.
\end{prop}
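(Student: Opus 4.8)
The plan is to run everything through the classical Maya/abacus dictionary between a weight diagram $x_\lambda$ and the boundary of the Young diagram of $\lambda$. Recall that the positions of $x_\lambda$ on the number line are exactly the contents: a black dot at content $c$ together with a white dot at $c+1$ marks an addable $c$-box, while a white dot at $c$ with a black dot at $c+1$ marks a removable $c$-box. In these terms, sliding a single black dot from a position $j$ to a free (white) position $i<j$ is precisely the removal of a connected rim hook $\gamma$ whose boxes fill the contents $i,i+1,\dots,j-1$; reading the colours of the positions strictly between $i$ and $j$, each white dot records a horizontal step (a width increment, raising the anticontent by one) and each black dot a vertical step (a height increment, lowering the anticontent by one). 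Hence $\wdd(\gamma)=1+\#\{\text{white dots in }(i,j)\}$, $\htt(\gamma)=1+\#\{\text{black dots in }(i,j)\}$, and the minimal (minimal-content) box of $\gamma$ sits at content $i$.

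With this dictionary the one-arrow case is immediate. Flipping a single wb pair, white at $i$ and black at $j$, moves the black dot left to $i$ and so removes the rim hook $\gamma$ above, producing $\lambda\subset\mu$ with $\mu/\lambda=\gamma$. The hw-condition, demanding exactly one more white than black dot in $[i,j]$, reads $\#\{\text{white in }(i,j)\}=\#\{\text{black in }(i,j)\}+1$, i.e. the HW-condition $\wdd(\gamma)=\htt(\gamma)+1$. The d-condition, forbidding a cut to the left of $j$ past which the white dots are outnumbered by black dots, becomes the ballot inequality $\#\{\text{white in }(i,k]\}\ge\#\{\text{black in }(i,k]\}$ for all $i\le k<j$; by the step-reading above this says exactly that every box of $\gamma$ has anticontent at least that of its minimal box, which is the D-condition. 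Thus flipping a single arrow pair corresponds bijectively to removing a single hook $\gamma\in\Gamma_0$.

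To treat $\Pi(\mu)$ in general I would pass from a single move to a family. The arrows flipped to reach $x_\nu$ are by definition pairwise nested or disjoint, hence have pairwise distinct sources, so the corresponding black-dot slides are non-crossing; performing them from the outermost inwards shows that each is a genuine removal of its attached $\Gamma_0$-hook from $x_\mu$, and that nested (resp. disjoint) arrows produce nested (resp. disjoint) hooks in the sense of~\ref{DefCov}. The removed hooks therefore form a covering of $\mu/\lambda$ by elements of $\Gamma_0$; by the uniqueness of coverings recorded in~\ref{DefCov} this covering must be $C(\mu/\lambda)$, whence $\mu/\lambda\in\Gamma$. Conversely, if $\lambda\subseteq\mu$ with $\mu/\lambda\in\Gamma$, then $C(\mu/\lambda)$ is a collection of pairwise nested-or-disjoint hooks in $\Gamma_0$; the dictionary turns each into a wb pair of $x_\mu$ (the HW- and D-conditions delivering the hw- and d-conditions, and the outer-to-inner ordering showing the source is already white and the target already black in $x_\mu$), the resulting arrows are again pairwise nested or disjoint, and flipping all of them returns $x_\lambda$. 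Hence $\lambda\in\Pi(\mu)$.

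I expect the main obstacle to be twofold, both parts living in the single-hook translation and the passage to families. First, the equivalence of the d-condition and the D-condition is the one genuine computation: it rests on reading ``black dot $=$ anticontent-decreasing step'' correctly and on recognising the arrow's d-condition as the ballot inequality above. Second, and more delicate, one must check that the hooks produced by flipping form \emph{exactly} the canonical covering $C(\mu/\lambda)$ and not merely \emph{some} decomposition into $\Gamma_0$-hooks; this is where the uniqueness of coverings is indispensable, together with the observation that, because the flipped arrows have distinct sources, the corresponding hooks automatically have distinct minimal contents, so no two of them can collide on the same diagonal.
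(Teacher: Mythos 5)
Your proposal is correct and follows essentially the same route as the paper: your Maya/abacus dictionary is the paper's Lemma~\ref{lemarrowhook} and Corollary~\ref{CorPair} (with the same translations hw-condition $\leftrightarrow$ HW-condition and d-condition $\leftrightarrow$ D-condition via the ballot/anticontent reading), and your passage to families via non-crossing slides, nested/disjoint preservation, and uniqueness of coverings is exactly the paper's Lemma~\ref{LemDisjointNested} followed by its iterative argument. The only place you are lighter than the paper is that ``nested (resp.\ disjoint) arrows produce nested (resp.\ disjoint) hooks'' is asserted rather than verified (the paper checks this with explicit row/column inequalities), but this is a checkable detail, not a gap in the approach.
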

We start the proof with the following lemma.

\begin{lemma} \label{lemarrowhook}
Let $\mu$ be any partition and $x_\mu$ its weight diagram. Flipping a wb pair of dots in positions $(l_w,l_b)$, with $l_w < l_b$, yields a weight diagram $x_\lambda$, where $\lambda$ is obtained from~$\mu$ by removing a rim hook~$\gamma$ such that
\begin{enumerate}[(i)]
\item $\htt(\gamma)$ is the number of black dots in $x_\mu$ in the interval $[l_w,l_b]$;
\item we have $\wdd(\gamma)=l_b-l_w-\htt(\gamma)+1$;
\item with $a$ the anticontent and $c$ the content of the minimal box in $\gamma$, the anticontent of the box with content $c+i$ is given by
$$a+i-2\sharp\{\mbox{black dots in }[l_w+1, l_w+i]\},\quad\mbox{for $0\le i\le t+s-2$}.$$
\end{enumerate}
\end{lemma}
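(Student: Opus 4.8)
The plan is to translate the combinatorics of weight diagrams directly into the combinatorics of rim hooks, by recalling the standard dictionary between partitions, their $\beta$-sequences $x_\mu$, and the removal of a rim hook. The key fact I would invoke is the classical correspondence: a partition $\mu$ has a removable rim hook precisely when $x_\mu$ has a white dot to the left of a black dot (a wb pair), and removing that rim hook corresponds exactly to swapping the two dots of the wb pair. So flipping the wb pair at positions $(l_w,l_b)$ necessarily produces $x_\lambda$ with $\lambda\subseteq\mu$ and $\mu/\lambda$ a single connected rim hook $\gamma$. Everything then reduces to reading off $\htt(\gamma)$, $\wdd(\gamma)$, and the anticontent profile from the arrangement of dots in the interval $[l_w,l_b]$.

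First I would establish (i). In the $\beta$-sequence picture, each black dot in $x_\mu$ corresponds to a row of $\mu$ (reading the strictly decreasing sequence $\mu_i-(i-1)$), and each white dot to a ``gap''. The height of the removed rim hook equals the number of rows it meets, which is exactly the number of black dots strictly inside the interval being flipped — equivalently the number of black dots in $[l_w,l_b]$, since the endpoints are one white ($l_w$) and one black ($l_b$). This is the standard rule that removing a rim hook via a wb flip touches one row per black dot crossed. Statement (ii) then follows immediately from the size formula \eqref{sizehook}: the number of boxes in $\gamma$ equals $l_b-l_w$ (the number of positions the black dot travels), so $\wdd(\gamma)=\mbox{size}(\gamma)-\htt(\gamma)+1=(l_b-l_w)-\htt(\gamma)+1$, as claimed.

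The substantive part is (iii), the anticontent profile. Here I would proceed box by box along the rim hook, from the minimal box upward, correlating each step with reading one dot of $x_\mu$ from position $l_w+1$ rightward. Walking along a rim hook from the minimal box, each step either moves up-and-right (increasing both content and anticontent by keeping anticontent roughly constant) or continues along a row/column; crossing a black dot in the $\beta$-sequence corresponds to moving to a higher row (anticontent drops by $2$ relative to the purely diagonal motion of a white dot), while crossing a white dot corresponds to moving right within a row. Concretely, advancing the content by one always advances the raw position by one, but a black dot forces a step that decreases the anticontent by $2$ compared to the ``diagonal'' baseline $a+i$. Summing these corrections gives precisely the stated correction term $-2\,\sharp\{\mbox{black dots in }[l_w+1,l_w+i]\}$.

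The main obstacle I anticipate is bookkeeping: getting the normalization and the index ranges exactly right so that the $i=0$ case recovers the minimal box (anticontent $a$, with zero black dots counted in the empty interval $[l_w+1,l_w]$) and $i=t+s-2$ recovers the maximal box, where $t+s-1$ is the size of $\gamma$ by \eqref{sizehook}. I would handle this by fixing one explicit convention — identifying the minimal box of $\gamma$ with the dot at position $l_w$ and stepping one unit of content per dot — and then verifying the anticontent recursion: reading a white dot keeps the local shape on the anti-diagonal (anticontent unchanged up to the $+i$ baseline), reading a black dot drops it by $2$. A clean way to present this is as an induction on $i$, checking the single-step increment matches $a+i-2\sharp\{\text{black dots}\}$ minus its value at $i-1$. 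The only genuinely delicate point is confirming that the ``positive diagonal'' direction and the English-notation coordinate conventions of Section~\ref{Prel} line up with the left-to-right reading of $x_\mu$; once that orientation is pinned down, (iii) is a routine telescoping computation.
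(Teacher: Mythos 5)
Your proposal is correct in substance, but it follows a genuinely different route from the paper. The paper's proof is entirely self-contained: labelling the black dots of $x_\mu$ in $[l_w,l_b]$ as $r_{s-1}<\cdots<r_0=l_b$ and setting $r_s=l_w$, it writes the row lengths explicitly as $\mu_{j+i}=r_i+j+i-1$, reinterprets the single long bead move as $s$ short moves $r_i\mapsto r_{i+1}$ (one per affected row), deduces $\lambda_{j+i}=\mu_{j+i}-(r_i-r_{i+1})$, and then the one-line check $\mu_{j+i+1}=\lambda_{j+i}+1$ shows that $\mu/\lambda$ is a single connected rim hook; parts (i)--(iii) all fall out of these explicit formulas. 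You instead import the classical abacus dictionary (sliding a bead left into a gap equals removing a rim hook, with leg length equal to the number of beads jumped over) to get the qualitative statement together with (i) and (ii), and you prove (iii) by a boundary-walk telescoping: reading the dots at positions $l_w+1,\ldots,l_b-1$ in order, black gives an up-step (anticontent $-1$), white gives a right-step (anticontent $+1$), so each black dot costs $-2$ against the baseline $a+i$. What your route buys is conceptual clarity --- the lemma is exposed as a quantitative refinement of a standard fact --- and the walk argument for (iii) is arguably more transparent than the paper's terse final sentence. What it costs is that the step-correspondence you rely on (the $k$-th step along $\gamma$ is vertical if and only if the dot at position $l_w+k$ is black) is exactly the indexing content that the paper's computation establishes, so a complete write-up would either have to prove it (essentially reproducing the paper's bookkeeping) or cite a precise reference for the Maya-diagram/boundary-path correspondence. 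One slip to repair: in your argument for (i), ``the number of black dots strictly inside the interval'' and ``the number of black dots in $[l_w,l_b]$'' differ by one (the dot at $l_b$ is black), so they are not ``equivalent''; the correct count is the closed-interval one, the strict count being the leg length $\htt(\gamma)-1$.
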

\begin{proof}
Let $s$ denote the number of black dots in $x_\mu$ in the interval $[l_w,l_b]$.
Denote by~$r_{s-1}< \ldots < r_0$ the positions of these black dots, hence $r_0=l_b$. For simplicity we set $r_s = l_w$. By definition of~$x_\mu$ and with $j$ the number of black dots right of~$l_b$, we have 
$$\mu_{j+i} = r_i + {j+i-1}, \qquad\mbox{for}\quad 0 \leq i < s. $$

Moving the black dot from~$r_0$ to $r_s$ can equivalently be interpreted as changing the position of all the indicated black dots from~$r_i$ to $r_{i+1}$. Thus $\lambda_{j+i} = r_{i+1} + j + i - 1$, or
\begin{equation}\label{eqlm}\lambda_{j+i}\,=\,\mu_{j+i}-(r_i-r_{i+1}),\qquad\mbox{for }\; 0\le i<s\end{equation} and $\lambda_q=\mu_q$ for all other $q$. Thus $\lambda \subseteq \mu$ and the skew diagram $\mu / \lambda$ has boxes in $s$ rows. It remains to be checked that the skew shape $\mu / \lambda$ is indeed a hook with the right properties. 

Comparing neighbouring rows we immediately have
\begin{equation*}\label{eqp1}
\mu_{j+i+1} = r_{i+1} + j + i \,=\, \lambda_{j+i} + 1,\qquad\mbox{for }\; 0\le i< s-1.
\end{equation*}
Thus the skew diagram is a connected hook~$\gamma:=\mu/\lambda$. By the above, we already know $\htt(\gamma)=s$, proving (i). By equation~\eqref{eqlm}, the total number of boxes in the skew shape $\mu/\lambda$ is equal to
\begin{equation*}\label{eqinterval}
\sum_{i=0}^{s-1} r_i - r_{i+1} = r_0 - r_s = l_b - l_w.
\end{equation*}
Part~(ii) then follows from equation~\eqref{sizehook}.

For the box with content $c+i$, the anticontent is given by $a+i$ minus twice the number of rows the box is above the minimal one,
which proves part~(iii).
\end{proof}

\begin{cor}\label{CorPair}
Fix a partition~$\mu$ with weight diagram $x_{\mu}$. Removing a rim hook from~$\mu$ is equivalent to flipping a wb pair in $x_\mu$. 

Consider such a rim hook~$\gamma$ with corresponding wb pair $p$.
 The wb pair $p$ is an arrow pair if and only if $\gamma$ is in $\Gamma_0$.
\end{cor}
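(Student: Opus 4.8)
\textbf{Proof plan for Corollary~\ref{CorPair}.}
The first assertion is immediate from Lemma~\ref{lemarrowhook}: flipping a wb pair in $x_\mu$ produces a $\lambda$ with $\lambda\subseteq\mu$ and $\mu/\lambda$ a connected hook, and conversely the standard bijection between rim hooks of a partition and wb-pairs of its weight diagram shows every rim-hook removal arises this way. The content of the corollary is therefore the second statement, which I would prove by matching the two defining conditions of an arrow pair (the hw-condition and the d-condition) against the two defining conditions of $\Gamma_0$ (the HW-condition $\wdd=\htt+1$ and the D-condition on anticontents), using the dictionary supplied by parts (i)--(iii) of Lemma~\ref{lemarrowhook}.

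For the HW-condition, I would argue as follows. Let $(l_w,l_b)$ be the wb pair and $\gamma$ the corresponding hook. By Lemma~\ref{lemarrowhook}(i)--(ii), $\htt(\gamma)$ equals the number of black dots in $[l_w,l_b]$ and $\wdd(\gamma)=l_b-l_w-\htt(\gamma)+1$. Writing $B$ for the number of black dots and $W$ for the number of white dots in the interval $[l_w,l_b]$, one has $B+W=l_b-l_w+1$, so $\wdd(\gamma)=W$. Hence the HW-condition $\wdd(\gamma)=\htt(\gamma)+1$ is equivalent to $W=B+1$, which is exactly the hw-condition on the pair. This equivalence is purely bookkeeping once the counts are in place.

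For the D-condition, I would translate the statement ``no box of $\gamma$ lies strictly above the positive diagonal through the minimal box'' into a statement about anticontents and then feed in the explicit anticontent formula of Lemma~\ref{lemarrowhook}(iii). The D-condition says that the anticontent $a$ of the minimal box is minimal among the anticontents in $\gamma$, i.e.\ that $a+i-2\sharp\{\text{black dots in }[l_w+1,\,l_w+i]\}\ge a$ for every $0\le i\le \htt(\gamma)+\wdd(\gamma)-2$; that is, the number of black dots in every initial sub-interval $[l_w+1,l_w+i]$ is at most $i/2$, equivalently the number of white dots there is at least the number of black dots. Reading the d-condition for arrow pairs as forbidding a cut point to the left of $l_b$ beyond which the dots between the cut and $l_w$ show a black surplus, one sees these are the same inequality scanned over the same intervals. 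The main obstacle is precisely keeping the endpoint conventions and off-by-one shifts consistent: the anticontent formula is indexed by boxes (running over $i$ from $0$ to $\htt+\wdd-2$) while the d-condition is phrased in terms of dot positions in $[l_w,l_b]$, and one must check that the discrete diagonal steps recorded by the $-2\sharp\{\cdots\}$ term correspond exactly to the running black/white surplus counted from the white end $l_w$. Once this single index translation is pinned down, both directions of the equivalence follow at once, completing the proof.
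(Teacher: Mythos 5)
Your proposal is correct and follows essentially the same route as the paper's own proof: the first assertion is read off from Lemma~\ref{lemarrowhook}, the HW-condition is matched to the hw-condition via parts (i)--(ii), and the D-condition is matched to the d-condition via the anticontent formula in part (iii). Your extra bookkeeping (identifying $\wdd(\gamma)$ with the number of white dots in $[l_w,l_b]$, and checking that the intervals $[l_w+1,l_w+i]$ for $0\le i\le \htt+\wdd-2$ are exactly those scanned by the d-condition) is precisely the verification the paper leaves implicit.
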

\begin{proof}
Lemma~\ref{lemarrowhook} implies in particular that flipping a wb pair of dots in $x_\mu$ corresponds to removing a rim hook from~$\mu$. Using the same arguments one immediately finds that any rim hook can be obtained this way, {\it i.e.} removing a rim hook always corresponds to flipping a wb pair.

So assume that we flip a wb pair $p$ in positions $(l_w,l_b)$ in $x_\mu$.
By Lemma \ref{lemarrowhook}(i) and (ii) it follows that the rim hook~$\gamma$ satisfies
$$\wdd(\gamma)-\htt(\gamma)-1\;=\; l_b-l_w-2\;\sharp\{\mbox{black dots in $x_\mu$ in $[l_w,l_b]$}\}.$$
Hence, $\gamma$ satisfies the HW-condition if and only if $p$ satisfies the hw-condition.
Lemma \ref{lemarrowhook}(iii) implies that~$\gamma$ satisfies the D-condition if and only if $p$ satisfies the d-condition. %This concludes the proof.
\end{proof}

\begin{lemma}\label{LemDisjointNested}
Fix a partition~$\mu$ with weight diagram $x_{\mu}$. Flipping two disjoint wb pairs corresponds to removing two disjoint rim hooks. Flipping two nested wb pairs corresponds to removing two nested rim hooks.
\end{lemma}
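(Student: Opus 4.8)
The plan is to reduce everything to the single-flip analysis of Lemma~\ref{lemarrowhook} by flipping the two pairs one after another. Write the two wb pairs as $(l_w^1,l_b^1)$ and $(l_w^2,l_b^2)$ with $l_w^i<l_b^i$. Since in $x_\mu$ every position carries either a black or a white dot, the two pairs automatically have distinct white dots and distinct black dots; for pairs that are disjoint or nested (never crossing) one checks at once that after flipping one pair the other is still a wb pair, and since the content of a box is an intrinsic coordinate of the plane, the border strip produced by either flip admits the same description of Lemma~\ref{lemarrowhook} whether it is computed before or after the other flip. The one fact I would record first, extracted from Lemma~\ref{lemarrowhook}(i)--(ii) together with the observation that a connected hook meets each diagonal exactly once, is that flipping a wb pair $(l_w,l_b)$ removes a border strip whose set of contents is exactly the interval $\{l_w,l_w+1,\ldots,l_b-1\}$, of size $l_b-l_w$. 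Denote the two resulting strips by $\gamma_1$ and $\gamma_2$.

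For the disjoint case I may assume $l_b^1<l_w^2$. Then the maximal content occurring in $\gamma_1$ is $l_b^1-1$, while the minimal content occurring in $\gamma_2$ is $l_w^2\ge l_b^1+1$, so every content of $\gamma_2$ exceeds every content of $\gamma_1$ by at least $2$. As two boxes sharing a side have contents differing by exactly $1$, no box of $\gamma_1$ can share a side with a box of $\gamma_2$, and the two strips plainly do not overlap; hence $\gamma_1$ and $\gamma_2$ are disjoint.

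For the nested case I may assume $l_w^2<l_w^1<l_b^1<l_b^2$, so the content intervals satisfy $C_1:=\{l_w^1,\ldots,l_b^1-1\}\subset C_2:=\{l_w^2,\ldots,l_b^2-1\}$. Here I would flip the outer pair $(l_w^2,l_b^2)$ first: its strip $\gamma_2$ is a removable rim hook of $\mu$ occupying, on each diagonal in $C_2$, the outermost (largest anticontent) box of that content. Removing $\gamma_2$ draws the lower-right boundary of $\mu$ inward by one box along precisely the diagonals of $C_2$. Now $\gamma_1$ is the rim hook of $\mu\backslash\gamma_2$ obtained by flipping $(l_w^1,l_b^1)$; its boxes lie on the new boundary and have contents in $C_1\subset C_2$, so each box of $\gamma_1$ should sit immediately up-left of a cell that belonged to $\gamma_2$. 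If so, the lower side and the right side of every box of $\gamma_1$ are shared with a box of $\gamma_1\cup\gamma_2$, which is exactly the condition for $\gamma_1$ to be nested in $\gamma_2$ in the sense of~\ref{DefCov}.

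The main obstacle is the word ``immediately'': turning the picture ``$\gamma_1$ lies one box inside where $\gamma_2$ used to be'' into a box-by-box verification of the shared-side condition, with no intervening cells of $\mu\backslash(\gamma_1\cup\gamma_2)$. I would settle it through the anticontent formula of Lemma~\ref{lemarrowhook}(iii). A direct computation with that formula shows that, for a content $c\in C_1$, the anticontents of the content-$c$ box of $\gamma_2$ and of $\gamma_1$ differ by the constant $(A_2-A_1)+(l_w^1-l_w^2)-2\,\sharp\{\text{black dots of }x_\mu\text{ in }[l_w^2+1,\,l_w^1]\}$, independent of $c$, where $A_i$ is the anticontent of the minimal box of $\gamma_i$. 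Identifying this constant with $2$, the anticontent gap between one diagonal and the next one outward, is what forces $\gamma_2$ to hug the lower-right boundary of $\gamma_1$ with no gaps and thereby yields the nesting; over the extra diagonals $C_2\backslash C_1$ the strip $\gamma_1$ has no boxes, so nothing further need be checked. Since the disjoint case needs no such bookkeeping, this anticontent identity is where the real work lies.
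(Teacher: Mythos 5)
Your disjoint case is correct, and in fact cleaner than the paper's own argument: the paper proves disjointness by comparing the \emph{rows} and \emph{columns} the two hooks occupy (via inequalities between the entries $\mu_k-(k-1)$), whereas you get it at once from the separation of the two content intervals. One small repair: Lemma~\ref{lemarrowhook}(i)--(ii) only give that the contents of the strip form an interval of length $l_b-l_w$; to pin this interval down as $\{l_w,\ldots,l_b-1\}$ you also need to note (e.g.\ from the proof of that lemma) that the maximal box of the strip is the last box of the row corresponding to the black dot at $l_b$, hence has content $l_b-1$.

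The nested case, however, has a genuine gap, and it sits exactly where you declare that nothing needs to be checked. First, your ``constant $=2$'' identity is left unproved (you call it ``the real work''), although it is immediate from an observation you already made: for $c\in C_1$, the box of $\gamma_2$ of content $c$ is the outermost box of $\mu$ on that diagonal, and the box of $\gamma_1$ of content $c$ is the outermost box of $\mu\backslash\gamma_2$ on that diagonal, so the two are diagonal neighbours by definition; no appeal to Lemma~\ref{lemarrowhook}(iii) is needed. Second, and more seriously, even granting that identity, nesting in the sense of~\ref{DefCov} does \emph{not} follow. Knowing that $(i,j)\in\gamma_1$ has $(i+1,j+1)\in\gamma_2$ says nothing about the cells $(i+1,j)$ and $(i,j+1)$ that the definition actually refers to. For contents interior to $C_1$ one can indeed deduce side-sharing from diagonal adjacency together with connectedness of $\gamma_1$; but for the lower side of the \emph{minimal} box of $\gamma_1$ (which lies on diagonal $l_w^1-1\in C_2\backslash C_1$) and the right side of the \emph{maximal} box of $\gamma_1$ (on diagonal $l_b^1\in C_2\backslash C_1$) --- precisely the ``extra diagonals'' you dismiss --- one must rule out that $\gamma_2$ turns away from $\gamma_1$ there, i.e.\ that the box of $\gamma_2$ of content $l_w^1-1$ is $(i+2,j+1)$ rather than $(i+1,j)$. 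This is not formal: the abstract configuration $\gamma_1=\{(1,1)\}$, $\gamma_2=\{(2,3),(2,2),(3,2)\}$ satisfies your diagonal condition on $C_1$ yet is not nested. What excludes it in the present situation is the partition property of the final shape: if $\gamma_2$'s box on diagonal $l_w^1-1$ were $(i+2,j+1)$, then $(i+1,j)$ would survive into $\lambda=\mu\backslash(\gamma_1\cup\gamma_2)$ while the box $(i,j)$ directly above it is removed, so $\lambda$ would not be a Young diagram. This boundary argument (and its mirror at the maximal box) is the missing step; the paper's proof avoids the issue by instead establishing containment of the row sets and of the column sets of the two hooks.
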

\begin{proof}
Denote the resulting partition after removing the rim hooks by~$\lambda$. 

Assume first that the two wb pairs $p_1$ and $p_2$ are disjoint and that $p_2$ is right of $p_1$. The black dots in the two intervals spanned by the two pairs contain pairwise different sets of black dots, which implies that their respective rim hooks occupy pairwise disjoint sets of rows. In addition, the left most black dot involved in the arrow pair $p_2$, which corresponds to row $j\in\mZ_{>0}$ for some $j$, will be moved to the right of the right most black dot in $p_1$, corresponding to some row $k$ with $k>j$. Thus we have
$$\lambda_j-(j-1) > \mu_{k}-(k-1) + (k-j-1),$$ 
with $(k-j-1)$ the number of black dots in between the two. This implies implies $\lambda_j + 1> \mu_k$, which translates to the fact that the columns occupied by the rim hook corresponding to $p_2$ are strictly bigger than the columns occupied by the rim hook for $p_1$

Assume now that $p_1$ is nested in $p_2$ and we first flip $p_2$, resulting in a partition~$\nu$, and then $p_1$, resulting in $\lambda$. Then the rim hook corresponding to $p_1$ occupies a subset of the set of rows of the rim hook corresponding to $p_2$ by construction. This in turn already implies that the rim hook for $p_1$ cannot occupy a column that is strictly larger than the columns occupied by the rim hook for $p_2$. In addition the left most black dot of $p_1$, again assuming this corresponds to a row $j$, is moved to the right of the white dot of the arrow pair $p_2$ and to the right of the black dot corresponding to row $j+q$. Then it holds $\lambda_j-(j-1) > \nu_{j+1}- j$. Hence $\lambda_j+1 > \nu_{j+1}$, which implies that the set of columns occupied by the rim hook for $p_1$ is a subset for the set of columns occupied by the rim hook for $p_2$. That the rim hooks are nested thus follows from construction and the above considerations about the rows and columns they occupy.
\end{proof}

\begin{proof}[Proof of Proposition~\ref{PropGang}]
First assume that $\lambda \in \Pi(\mu)$. Denote the arrow pairs in $x_\mu$ which are flipped to create $x_\lambda$ by~$p_1,\cdots,p_k$, which are labelled such that $l_w^1< l_w^2<\cdots<l_w^k$ with $l_w^i$ the position of the white dot in $p_i$. Flipping the pair $p_1$ yields a weight diagram $x_\nu$ for a partition~$\nu$ with $\mu/\nu\in \Gamma_0$ by Corollary~\ref{CorPair}. By assumption any two arrows are either disjoint or nested. This implies in particular that $p_2,\cdots,p_k$ are still arrow pairs in $x_\nu$. We can thus proceed iteratively and we find that $\lambda$ is obtained from $\mu$ by consecutively removing rim hooks which are in $\Gamma_0$. The fact that all these hooks are either disjoint or nested then follows from Lemma~\ref{LemDisjointNested}.

Now assume that~$\lambda \subseteq \mu$ and $\mu / \lambda \in \Gamma$. Note that we can obtain $\lambda$ from~$\mu$ by successively removing hooks from the covering of~$C(\mu / \lambda)$, obtaining a partition in each step. Each of these steps will thus correspond to flipping an arrow pair by Corollary~\ref{CorPair}. As above it follows that all these pairs in intermediate weight diagrams are also arrow pairs in $x_\mu$. 
Thus one immediately obtains that~$\lambda \in \Pi(\mu)$.\end{proof}

\subsection{Multiplicity one property for projective modules}

In this section we show that the multiplicities $[P_r(\nu):L_r(\mu)]$, which are determined by equation~\eqref{eqPL} and Theorem~\ref{MainThm}, are either $1$ or $0$. We obtain this from results in \cite{gang} which we can now translate to our setting by Proposition~\ref{PropGang}.

\begin{prop}\label{PropMult1}
For any $\mu,\nu\in\Lambda_r$, we have
$$[P_r(\nu):L_r(\mu)]\;\le\; 1.$$
\end{prop}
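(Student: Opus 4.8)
The plan is to feed the cell multiplicities of Theorem~\ref{MainThm} into the reciprocity formula \eqref{eqPL}, thereby turning the Cartan number into a count of partitions, and then to recognise that count as a composition/Cartan multiplicity for the periplectic Lie superalgebra, whose multiplicity-one property is already available in \cite{gang} via the translation of Proposition~\ref{PropGang}.

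First I would substitute Theorem~\ref{MainThm} into \eqref{eqPL}. Each factor $[W_r(\lambda):L_r(\mu)]$ and $[W_r(\lambda'):L_r(\nu')]$ lies in $\{0,1\}$ (here $\lambda\in\LL_r$ forces $\lambda'\in\LL_r$, and $\nu\in\Lambda_r$ forces $\nu'\in\Lambda_r$, so both factors are governed by Theorem~\ref{MainThm}). Hence the sum collapses to a count:
$$[P_r(\nu):L_r(\mu)]=\#\bigl\{\lambda\in\LL_r \,\big|\, \lambda\subseteq\mu,\ \mu/\lambda\in\Gamma,\ \lambda'\subseteq\nu',\ \nu'/\lambda'\in\Gamma\bigr\}.$$
The proposition is thus equivalent to the statement that at most one $\lambda$ satisfies all four conditions simultaneously.

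Next I would pass to the weight- and arrow-diagram language of Appendix~A. By Proposition~\ref{PropGang}, the pair of conditions $\lambda\subseteq\mu,\ \mu/\lambda\in\Gamma$ is equivalent to $\lambda\in\Pi(\mu)$, and likewise $\lambda'\subseteq\nu',\ \nu'/\lambda'\in\Gamma$ is equivalent to $\lambda'\in\Pi(\nu')$. Using \ref{xtrans}, conjugation acts on weight diagrams as the reflection about the midpoint of $0$ and $1$ followed by a colour swap; applying this to the condition $\lambda'\in\Pi(\nu')$ rewrites it as the statement that $x_\lambda$ is obtained from $x_\nu$ by flipping a family of pairs along the mirrored, colour-reversed arrows of $x_\nu$. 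Thus the displayed count is precisely the number of weight diagrams obtainable from $x_\mu$ by moving dots along arrows and at the same time from $x_\nu$ by moving dots along the \emph{dual} (mirrored) arrows. This is exactly the combinatorial shape of a Cartan multiplicity in the category studied in \cite{gang}, where the composition factors of standard and of costandard modules are governed by the two mutually dual families of arrows; I would therefore identify the count with the corresponding Cartan entry there and invoke the multiplicity-one result proved in \cite{gang} to conclude $[P_r(\nu):L_r(\mu)]\le 1$.

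The representation-theoretic content is entirely contained in the first step, which reduces everything to combinatorics; the main obstacle is the bookkeeping of the middle step. One must check that conjugation genuinely carries the arrow pairs of Proposition~\ref{PropGang} onto the opposite family of arrows used in \cite{gang}, and in particular that the left--right asymmetric d-condition is transported to the d-condition governing that opposite family. Only once this identification is verified can the displayed count be asserted to equal a true Cartan number in \cite{gang} rather than an unrelated pairing, after which the desired inequality follows at once from the cited multiplicity-one statement.
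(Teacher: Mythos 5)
Your proposal follows essentially the same route as the paper's own proof: the paper likewise substitutes Theorem~\ref{MainThm} into \eqref{eqPL}, translates both factors into the arrow-diagram combinatorics of \cite{gang} via Proposition~\ref{PropGang} (this is exactly Lemma~\ref{LemGang}, whose part~(ii) carries out your ``middle step'' of transporting conjugation through \ref{xtrans} and \eqref{eqBGG}), and concludes by the multiplicity-one theorem \cite[Theorem~8.1.2]{gang}. The one refinement to note is that the paper does not assert your count \emph{equals} the Cartan number $[Q(\underline{\nu}):S(\underline{\mu})]$ of $\mathfrak{pe}(n)$ --- the sum there runs over all of $X_n$, which may contain contributing weights not of the form $\underline{\lambda}$ with $\lambda\in\LL_r$ --- but only that it is bounded above by it, which is all the inequality requires.
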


Before proving this we introduce some notions related to the periplectic Lie superalgebra~$\mathfrak{pe}(n)$ for $n\in\mZ_{\ge 3}$. This is a Lie superalgebra with underlying Lie algebra~$\mathfrak{gl}(n)$. We follow the conventions of \cite{gang}, so in particular take the corresponding triangular decomposition of~$\mathfrak{pe}(n)$, with Cartan subalgebra~$\fh\cong\mk^n$ and $\fh^\ast=\langle\varepsilon_1,\ldots,\varepsilon_n\rangle_{\mk}$. We set
$$X_n:=\{\omega=\sum_{j=1}^n\omega_j\varepsilon_j\,|\, \omega_j\in\mZ \;\;\mbox{ and }\;\; \omega_1\ge \omega_2\ge\cdots\ge \omega_n\}\;\;\subset\;\;\fh^\ast.$$ We work in the category $\cF$ of finite dimensional integrable modules. The simple modules are given by~$S(\omega)$ (up to parity), with highest weight $\omega\in X_n$. Following \cite[Section~3.1]{gang}, we have the thick Kac module $\Delta(\omega)$, for any $\omega\in X_n$. We also denote the projective cover of~$S(\omega)$ in $\cF$ by~$Q(\omega)$. This module has a filtration with sections given by thick Kac modules and the corresponding multiplicities $(Q(\omega):\Delta(\xi))$ do not depend on the choice of filtration.
For any partition~$\lambda$ with length bounded by~$ n$, we associate
$$\underline{\lambda}\;=\;\sum_{j=1}^n\lambda_j\varepsilon_j\;\in X_n.$$
By the reformulation of our main result into arrow diagram combinatorics, we see that cell multiplicities for the periplectic Brauer algebra are special cases of Kazhdan-Lusztig multiplicities for the periplectic Lie superalgebra as determined in \cite{gang}.
\begin{lemma}\label{LemGang}
Assume $r\le n$, $\lambda\in\LL_r$ and $\mu\in \Lambda_r$, then we have
\begin{enumerate}[(i)]
\item $[W_r(\lambda):L_r(\mu)]\;=\; [\Delta(\underline{\lambda}):S(\underline{\mu})]$;
\item $[W_r(\lambda'):L_r(\mu')]\;=\; (Q(\underline{\mu}):\Delta(\underline{\lambda}))$.
\end{enumerate}
\end{lemma}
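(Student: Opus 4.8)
The overall strategy is that both equalities are instances of a single dictionary: under the correspondence $\lambda\leftrightarrow\underline{\lambda}$, the arrow diagram calculus developed in Section~\ref{SecArc} matches the one of~\cite{gang} for $\mathfrak{pe}(n)$, and both the left-hand and right-hand multiplicities are multiplicity-free and governed by this calculus. The hypothesis $r\le n$ is what makes the dictionary exact: every partition that occurs has size at most $r$, hence length and first part at most $n$, so after the usual $\rho$-shift the finite weight diagram of $\underline{\mu}\in X_n$ records precisely the $n$ right-most black dots of the infinite diagram $x_\mu$, and no boundary phenomena intervene.

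For part (i), I would first rewrite the left-hand side: by Theorem~\ref{MainThm} together with the identification $\Upsilon=\Gamma$ of Theorem~\ref{ThmEqui}, the number $[W_r(\lambda):L_r(\mu)]$ is $0$ or $1$, equal to $1$ exactly when $\lambda\subseteq\mu$ with $\mu/\lambda\in\Gamma$, and by Proposition~\ref{PropGang} this holds iff $\lambda\in\Pi(\mu)$. On the other side, the determination of the decomposition numbers of $\mathfrak{pe}(n)$ in~\cite{gang} shows that $[\Delta(\underline{\lambda}):S(\underline{\mu})]$ is likewise multiplicity-free and is nonzero precisely when $\underline{\lambda}$ is obtained from $\underline{\mu}$ by flipping a collection of arrow pairs of $x_{\underline{\mu}}$. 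It then remains to verify that the weight diagram $x_\mu$ and the $\mathfrak{pe}(n)$-diagram of $\underline{\mu}$ agree on the relevant window and that the two notions of arrow pair (the hw- and d-conditions) transport unchanged; granting this, $\lambda\in\Pi(\mu)\Leftrightarrow\underline{\lambda}\in\Pi(\underline{\mu})$ and part (i) follows.

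For part (ii) I would not redo the combinatorics but deduce it from part (i) by Humphreys-BGG reciprocity on both sides. The (twisted) reciprocity in $\cF$ proved in~\cite{gang} expresses $(Q(\underline{\mu}):\Delta(\underline{\lambda}))$ as a thick-Kac decomposition multiplicity $[\Delta(\underline{\lambda'}):S(\underline{\mu'})]$ with the labels those of the conjugate partitions. Since $|\lambda|,|\mu|\le r\le n$ and conjugation preserves size, $\lambda'\in\LL_r$ and $\mu'\in\Lambda_r$ are again admissible labels of length at most $n$, so part (i) applied to the pair $(\lambda',\mu')$ gives $[\Delta(\underline{\lambda'}):S(\underline{\mu'})]=[W_r(\lambda'):L_r(\mu')]$, which is exactly part (ii). Combined with the reciprocity~\eqref{eqBGG} for $A_r$, this also records the expected identity $(P_r(\mu):W_r(\lambda))=(Q(\underline{\mu}):\Delta(\underline{\lambda}))$ of cell- and Kac-flag multiplicities.

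The step I expect to be the main obstacle is precisely the convention matching underlying both parts: making explicit the $\rho$-shift and truncation that pass from the infinite diagram $x_\mu$ to the finite $\mathfrak{pe}(n)$-diagram of $\underline{\mu}$, checking the hw- and d-conditions are preserved, and for part (ii) confirming that the conjugation twist in~\cite{gang}'s reciprocity is literally the partition conjugation of~\eqref{eqBGG} (the reflection and colour swap of~\ref{xtrans} at the level of weight diagrams). The condition $r\le n$ is exactly what removes the boundary effects that would otherwise obstruct this identification.
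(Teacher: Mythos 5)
Your part (i) is, in outline, exactly the paper's proof: both sides are rewritten combinatorially (Theorem~\ref{MainThm} together with Proposition~\ref{PropGang} on the Brauer side, \cite[Theorem~6.3.3]{gang} on the Lie superalgebra side), and the verification you defer to the end --- that the weight diagram of $\underline{\mu}$ in \cite{gang} coincides with $x_\mu$ except that all dots in positions $\le -n$ are recoloured, and that the extra flips available in the \cite{gang} diagram never produce weights of the form $\underline{\lambda}$ with $\lambda\in\LL_r$ --- is precisely the matching the paper carries out, with the hypothesis $r\le n$ used for exactly the reason you indicate.

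Part (ii), however, has a genuine gap: the ``twisted reciprocity in $\cF$'' you invoke, namely $(Q(\underline{\mu}):\Delta(\underline{\lambda}))=[\Delta(\underline{\lambda'}):S(\underline{\mu'})]$, is not a result of \cite{gang}, and it cannot even be formulated there for general weights: partition conjugation does not extend to $X_n$, whose elements may have negative entries --- equivalently, the reflection-plus-colour-swap of~\ref{xtrans} turns a \cite{gang}-type weight diagram (finitely many black dots) into a diagram with finitely many white dots, which is the diagram of no element of $X_n$. The reciprocity one could hope to import from the theory of Kac modules for $\mathfrak{pe}(n)$ relates Kac flag multiplicities of projectives to decomposition numbers of \emph{thin} Kac modules at the same (possibly shifted) highest weight, not of thick Kac modules at conjugate weights, so it does not produce the conjugation twist you need. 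In fact, granted \cite[Theorems~6.3.1 and~6.3.3]{gang}, the identity you want to quote is equivalent to the conjunction of parts (i) and (ii), so citing it amounts to assuming what is to be proven. The step you explicitly decline to do (``I would not redo the combinatorics'') is the proof: by \cite[Theorem~6.3.1]{gang} the multiplicity $(Q(\underline{\mu}):\Delta(\underline{\lambda}))$ equals $1$ exactly when $\underline{\lambda}\in\blacktriangle(\underline{\mu})$, and one must show, using~\ref{xtrans}, that $\underline{\lambda}\in\blacktriangle(\underline{\mu})$ if and only if $\lambda'\in\Pi(\mu')$; part (ii) then follows by applying the part (i) combinatorics (Theorem~\ref{MainThm} and Proposition~\ref{PropGang}) to the pair $(\lambda',\mu')$, together with \eqref{eqBGG}. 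This conjugation matching of $\blacktriangle$ with $\Pi(\,\cdot\,')$ is the actual content of part (ii), and your proposal leaves it unproved.
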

\begin{proof}
By Theorem~\ref{MainThm} and Proposition~\ref{PropGang}, we have
\begin{equation}\label{eqWgang}[W_r(\lambda):L_r(\mu)]=\begin{cases}1& \lambda\in\Pi(\mu),\\ 
0&\mbox{otherwise.}\end{cases}\end{equation}
On the other hand, by \cite[Theorem~6.3.3]{gang}, we have
$$[\Delta(\underline\lambda):S(\underline\mu)]=\begin{cases}1&\mbox{if }\; \underline\lambda\in\blacktriangledown (\underline\mu),\\ 
0&\mbox{otherwise,}\end{cases}$$
with $\blacktriangledown(-)$ defined in \cite[Section~6.2]{gang}. It follows immediately that, since the weight diagram of~$\underline\mu$ in \cite[Section~6.2]{gang} is identical to $x_\mu$ except that all dots in positions in $\mZ_{\le \minus n}$ are changed from black to white, the conditions $ \lambda\in\Pi(\mu)$ and $ \underline\lambda\in\blacktriangledown (\underline\mu)$ are equivalent. (However, we stress that in general $\blacktriangledown (\underline\mu)$ will have higher cardinality than $\Pi(\mu)$, but the `extra' elements are not of the form $\underline\lambda$ with $\lambda\in\LL_r$.) This proves part~(i).

By \cite[Theorem~6.3.1]{gang}, we have
$$(Q(\underline{\mu}):\Delta(\underline{\lambda}))\;=\;\begin{cases} 1& \mbox{if }\; \underline\lambda\in\blacktriangle (\underline\mu),\\
0&\mbox{otherwise}.\end{cases}$$
Using the description of~$x_{\mu'}$ in~\ref{xtrans} it follows that~$\underline\lambda\in\blacktriangle (\underline\mu)$ if and only if $\lambda'\in\Pi(\mu')$. Part~(ii) then follows from equations~\eqref{eqWgang} and~\eqref{eqBGG}.
\end{proof}

\begin{proof}[Proof of Proposition~\ref{PropMult1}]
Equation~\eqref{eqPL} and Lemma~\ref{LemGang} imply that
\begin{eqnarray*}[P_r(\nu):L_r(\mu)]&=& \sum_{\lambda\in\LL_r} ( Q(\underline{\nu}):\Delta(\underline{\lambda}))[\Delta(\underline{\lambda}):S(\underline{\mu})]\\\
&\le &  \sum_{\omega\in X_n} ( Q(\underline{\nu}):\Delta(\omega))[\Delta(\omega):S(\underline{\mu})]\;=\;[Q(\underline{\nu}):S(\underline{\mu})].\end{eqnarray*}
The conclusion thus follows from \cite[Theorem~8.1.2]{gang}.
\end{proof}

\subsection*{Acknowledgement}
This research was supported by the Australian Research Council grants DP140103239 and DP150103431.

\end{document}